\newtheorem{theorem}{Theorem}[section]
\newtheorem*{theorem*}{Theorem}
\newtheorem{lemma}[theorem]{Lemma}
\newtheorem{proposition}[theorem]{Proposition}
\newtheorem{corollary}[theorem]{Corollary}
\newtheorem*{conjecture*}{Conjecture}
\newtheorem{remark}[theorem]{Remark}
\newtheorem{definition}[theorem]{Definition}
\newcommand{\ie}{{\em i.e.}\ }
\newcommand{\opname}[1]{\operatorname{\mathsf{#1}}}
\renewcommand{\mod}{\opname{mod}\nolimits}
\newcommand{\proj}{\opname{proj}\nolimits}
\newcommand{\per}{\opname{per}\nolimits}
\newcommand{\add}{\opname{add}\nolimits}
\newcommand{\der}{\cd}
\newcommand{\dimv}{\underline{\dim}\,}
\newcommand{\rank}{\opname{rank}\nolimits}
\newcommand{\rankv}{\underline{\rank}\,}
\newcommand{\ind}{\opname{ind}}
\newcommand{\lcm}{\opname{lcm}}
\newcommand{\Z}{\mathbb{Z}}
\newcommand{\Q}{\mathbb{Q}}
\renewcommand{\P}{\mathbb{P}}
\newcommand{\ra}{\rightarrow}
\newcommand{\Hom}{\opname{Hom}}
\newcommand{\go}{\opname{G_0}}
\newcommand{\Ext}{\opname{Ext}}
\newcommand{\End}{\opname{End}}
\newcommand{\ca}{{\mathcal A}}
\newcommand{\cc}{{\mathcal C}}
\newcommand{\cd}{{\mathcal D}}
\newcommand{\ct}{{\mathcal T}}
\newcommand{\cw}{{\mathcal W}}
\begin{document}

\title[On cluster algebra of type $\mathrm{C}$]{Cluster algebras arising from cluster tubes I: integer vectors}\thanks{Partially  supported by the National Natural Science Foundation of China (No. 11471224)}

\author{Changjian Fu}
\address{Changjian Fu\\Department of Mathematics\\SiChuan University\\610064 Chengdu\\P.R.China}
\email{changjianfu@scu.edu.cn}
\date{\today}
\author{Shengfei Geng}
\address{Shengfei Geng\\Department of Mathematics\\SiChuan University\\610064 Chengdu\\P.R.China}
\email{genshengfei@scu.edu.cn}
\author{Pin Liu}
\address{Pin Liu\\Department of Mathematics\\
   Southwest Jiaotong University\\
  610031 Chengdu \\
  P.R.China}
  \email{pinliu@swjtu.edu.cn}
\subjclass[2010]{16G20, 13F60}
\keywords{cluster tube, $\tau$-rigid module, denominator vector, $g$-vector, $c$-vector }
\maketitle

\begin{center}
\textsf{\it To our teacher Liangang Peng on the occasion of his 60th birthday}
\end{center}

\begin{abstract}
We study cluster algebras arising from cluster tubes. We obtain categorical interpretations for $g$-vectors, $c$-vectors and denominator vectors for cluster algebras of type $\mathrm{C}$ with respect to arbitrary initial seeds. In particular, a denominator theorem has been proved, which enables us to establish the linearly independence of denominator vectors of cluster variables  from the same cluster for cluster algebras of type $\mathrm{A}\mathrm{B}\mathrm{C}$. This strengthens the link between cluster tubes and cluster algebras of type $\mathrm{C}$ initiated by Buan, Marsh and Vatne.
\end{abstract}


\shorttableofcontents{}{1}

\section{Introduction and main results}~\label{s:introduction}
\subsection{Motivation}
At the beginning of this century, Fomin and Zelevinsky~\cite{FominZelevinsky02} invented a new class of algebras called cluster algebras motivated by total positivity in algebraic groups and canonical bases in quantum groups. Since their introduction, cluster algebras have found application in a diverse variety of settings which include Poisson geometry, Teichm{\"u}ller theory, tropical geometry, algebraic combinatorics and last not least the representation theory of quivers and finite-dimensional algebras. See, for example, \cite{FominZelevinsky04, Keller11} and references therein. Cluster algebras are commutative algebras endowed with a distinguished set of generators called the {\it cluster variables}. These generators are gathered into overlapping sets of fixed finite cardinality, called {\it clusters}, which are defined recursively from an initial one via an operation called {\it mutation}.  A cluster algebra is of {\it finite type} if it has only a finite number of clusters.  Fomin and Zelevinsky \cite{FominZelevinsky03} proved that the cluster algebras of finite type are parametrized by the finite root systems. In particular, cluster algebras of type $\mathrm{A}, \mathrm{D}, \mathrm{E}$ are skew-symmetric and the ones of type $\mathrm{B}, \mathrm{C}, \mathrm{F}, \mathrm{G}$ are no longer skew-symmetric but only skew-symmetrizable. 

 It was recognized in \cite{MRZ03} that the combinatorics  of cluster mutation are closely related to those of tilting theory in the representation theory of quivers. This discovery was the main motivation for the invention of cluster categories and the study of more general 2-Calabi-Yau triangulated categories ({\it cf.}~\cite{BMRRT, KR}). In the categorical setting, the cluster tilting objects play the role of the clusters, and their indecomposable direct summands the one of the cluster variables. An explicit map from the set of indecomposable factors of cluster tilting objects to the set of cluster variables was defined by Caldero and Chapoton in \cite{CalderoChapoton} for cluster categories and by Palu~\cite{Palu} for 2-Calabi-Yau triangulated categories with cluster tilting objects.
This leads to a great development for cluster algebras of skew-symmetric type by additive categorifications given by 2-Calabi-Yau triangulated categories with cluster-tilting objects. However, it is still an open question that how to construct suitable additive categorification for a general cluster algebra of skew-symmetrizable type. We refer to ~\cite{Zhu07, Demonet, GLS17} for some progress on skew-symmetrizable cluster algebras with an acyclic initial seed via categorications~({\it cf.} also~\cite{Demonet10}). 

In~\cite{BuanMarshVatne}, Buan {\it et al.} proposed that the cluster categories of tubes~(called {\it cluster tubes}) are good candidates for the combinatorics of cluster algebras of type $\mathrm{C}$ \footnote{In fact, Buan {\it et al.}~\cite{BuanMarshVatne} have considered the cluster algebra of type $\mathrm{B}$ but not of type $\mathrm{C}$. However, the cluster combinatorics of cluster algebras of type $\mathrm{B}$ is the same as  that of cluster algebras of type $\mathrm{C}$~\cite{FominZelevinsky03b}. Hence here and after, we may state the result of ~\cite{BuanMarshVatne} in cluster algebras of type $\mathrm{C}$.}.  We remark that the cluster tubes have no cluster-tilting objects but only maximal rigid objects. Nevertheless, they proved that there is a bijection between the indecomposable rigid objects of cluster tubes and the cluster variables of cluster algebras of type $\mathrm{C}$. Moreover, the bijection induces a bijection between the basic maximal rigid objects and the clusters, which is compatible with mutations. For a special choice of acyclic initial seed, an analogue of Caldero-Chapoton formula has been established in~\cite{ZhouZhu14}. The present paper and its sequel \cite{FuGengLiu} are devoted to investigate the link between cluster tubes and cluster algebras of type $\mathrm{C}$ in a full generality. Namely, we consider cluster algebra of type $\mathrm{C}$ with respect  to an arbitrary initial seed. As shown in \cite{FominZelevinsky07}, the structure of cluster algebras is to a large extent controlled by certain integer vectors called $g$-vectors, $c$-vectors and denominator vectors. In the present paper, we focus on these integer vectors arising from cluster algebras.
 We find interpretations of denominator vectors, $g$-vectors and $c$-vectors in terms of representations of algebras arising from cluster tubes.  Using these interpretations, we confirm certain conjectures about denominator vectors made by Fomin and Zelevinsky in \cite{FominZelevinsky07}. 

\subsection{Main results}
Now we describe the main results of the paper in more detail.
Fix a positive integer $n$. Let $\Delta_{n+1}$ be the cyclic quiver with $n+1$ vertices. We label the vertex set by $\{1,2,\ldots, n+1\}$ such that the arrows are precisely from vertex $i$ to $i+1$ (taken modulo $n+1$). Denote by $\ct:=\ct_{n+1}$ the category of finite-dimensional nilpotent representation over the opposite quiver $\Delta_{n+1}^{\operatorname{op}}$.  The category $\ct$ is called a {\it tube of rank $n+1$}. It is a hereditary abelian category. Each indecomposable object of $\ct$ is uniquely determined by its socle and its quasi-length. For $1\leq a\leq n+1$ and $b\in \Z$, we will denote by $(a,b)$ the unique indecomposable object with socle the simple at vertex $a$ and of quasi-length $b$. Throughout, we will use the convention that $(a, b)=0$ if $b\leq 0$ and when we write equations and inequalities which involve first coordinates outside the domain $1,\ldots, n+1$, we will implicitly assume identification modulo $n+1$.

 Let $\der^b(\ct)$ be the bounded derived category of $\ct$ with suspension functor $\Sigma$. Let $\tau$ be the Auslander-Reiten translation of $\der^b(\ct)$, where $\tau(a, b)=(a-1, b)$. The {\it cluster tube of rank $n+1$} is the orbit category $\cc:=\cc_{n+1}=\der^b(\ct)/ \tau^{-1}\circ \Sigma$. The category $\cc$ admits a canonical triangle structure such that the canonical projection $\pi:\der^b(\ct)\to \cc$ is a triangle functor \cite{Keller05} ({\it cf.} also \cite{Barot-Kussin-Lenzing}). Moreover, it is a Calabi-Yau triangulated category with Calabi-Yau dimension of $2$. The composition of the embedding of $\ct$ into $\der^b(\ct)$ with the canonical projection $\pi$ yields a bijection between the indecomposable objects of $\ct$ and the indecomposable objects of $\cc$. We always identify the objects of  $\ct$ with the ones of $\cc$ by the bijection. In particular, we may say the length of an indecomposable object of $\cc$.

 An object $T$ of $\cc$ is {\it rigid} if $\Ext^1_{\cc}(T, T)=0$. It is
{\it maximal rigid} if it is rigid and $\Ext^1_{\cc}(X\oplus T, X\oplus
T)=0$ implies that $X\in \add T$, where $\add T$ denotes the subcategory of $\cc$ consisting of objects which are finite direct sum of direct summands of $T$.   Let $T=\bigoplus_{i=1}^nT_i$ be a basic maximal rigid object of $\cc$ with the endomorphism algebra $\Gamma=\End_{\cc}(T)$ and $\mod \Gamma$ the category of finitely generated right $\Gamma$-modules.  To $T$ we associate a quiver $Q_T$ whose vertices correspond to the indecomposable direct summands of $T$ and the arrows from the indecomposable direct summand $T_i$ to $T_j$ is given by the dimension of the space of irreducible maps $\opname{rad}(T_i,T_j)/\opname{rad}^2(T_i,T_j)$, where $\opname{rad}(-,-)$ is the radical of the category $\add T$. It is clear that the quiver $Q_T$ coincides with the Gabriel quiver of the endomorphism algebra $\Gamma$.  We will study the following classical problem:
\begin{itemize}
\item Which representations are determined by their dimension vectors?
\end{itemize}
Recall that an $\Gamma$-module $M$ is {\it $\tau$-rigid} if $\Hom_\Gamma(M, \tau M)=0$, where $\tau$ is the Auslander-Reiten translation of $\mod \Gamma$. An indecomposable $\tau$-rigid $\Gamma$-module is locally free in the sense of~\cite{GLS14}, and then each indecomposable $\tau$-rigid $\Gamma$-module $M$ admits an integer vector called the {\it rank vector} $\rankv M$~({\it cf.} Section~\ref{ss:rank-vector} for the precise definition). Rank vectors are closely related to dimension vectors. Our first main result in this paper shows that indecomposable $\tau$-rigid modules are determined by their rank vectors({\it cf.}~Theorem~\ref{t:rank-vector-of-tau-rigid}).
\begin{theorem}~\label{t:thm1}
Let $\cc$ be the cluster tube of rank $n+1$ and $T=\bigoplus_{i=1}^nT_i$ a basic maximal rigid object of $\cc$. Let $\Gamma=\End_{\cc}(T)$ be the endomorphism algebra of $T$. Then different indecomposable $\tau$-rigid $\Gamma$-modules have different rank vectors. In particular, different indecomposable $\tau$-rigid $\Gamma$-modules have different dimension vectors.
\end{theorem}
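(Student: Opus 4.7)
The plan is to transport the problem from $\mod\Gamma$ back to the cluster tube $\cc$ via the functor $F:=\Hom_{\cc}(T,-)\colon \cc\to\mod\Gamma$. Since $\cc$ is $2$-Calabi-Yau and $T$ is rigid, I expect $F$ to induce a bijection between isomorphism classes of indecomposable rigid objects $X$ of $\cc$ with $X\notin\add\Sigma T$ and isomorphism classes of indecomposable $\tau$-rigid $\Gamma$-modules, sending each summand $T_i$ to the indecomposable projective $P_i=F(T_i)$. This is the standard cluster-tilting dictionary adapted to the maximal-rigid setting of cluster tubes, and it reduces the theorem to the following statement in $\cc$: if $X,Y$ are indecomposable rigid objects of $\cc$ with $X,Y\notin\add\Sigma T$ and $\rankv F(X)=\rankv F(Y)$, then $X\simeq Y$.

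The indecomposable rigid objects of $\cc$ are precisely the modules $(a,b)$ with $1\le a\le n+1$ and $1\le b\le n$, giving $n(n+1)$ candidates to check. The next step is to derive an explicit formula for $\rankv F(a,b)$ in terms of the combinatorics of the pair $(a,b)$ relative to the summands $T_j=(a_j,b_j)$ of $T$. Using the locally-free structure of $F(a,b)$ (in the sense of~\cite{GLS14}), the $j$-th component of $\rankv F(a,b)$ should equal the multiplicity of the generalized simple $E_j$ in any $E$-filtration of $F(a,b)$, and I expect this count to be re-expressible as an intersection-type number between the arcs representing $(a,b)$ and $T_j$ in the Buan--Marsh--Vatne model of $\cc$ (an $(n+1)$-gon with a puncture).

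With such a formula in hand, I would prove injectivity of $(a,b)\mapsto\rankv F(a,b)$ by induction on the length $b$. At the base $b=1$, the fact that $T$ is maximal rigid forces the socles of the summands $T_j$ to cover all $n+1$ simples of $\ct$, so each simple $(a,1)$ acquires a distinctive rank vector through its pattern of intersections with the $T_j$; for general $b$, short exact sequences in $\ct$ expressing $(a,b)$ as an extension of shorter indecomposables translate into additivity relations for rank vectors, and the combination with the base case recovers both $a$ and $b$ from $\rankv F(a,b)$. The second conclusion (dimension vectors also suffice) then follows because the dimension vector refines the rank vector when $\Gamma$ has a nontrivial symmetrizer, so two modules with the same dimension vector a fortiori have the same rank vector.

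The main obstacle, in my view, is step two: obtaining a sufficiently clean closed form for $\rankv F(a,b)$. Because $\Gamma$ is non-hereditary, this requires a careful analysis of its symmetrizer and of locally-free filtrations over it, together with a verification that the resulting algebraic invariant matches the geometric intersection combinatorics on the tube $\cc$. Once that formula is established, the remaining injectivity argument is essentially combinatorial and the theorem should follow without further difficulty.
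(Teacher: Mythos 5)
Your opening reduction---showing that $X\mapsto \rankv F(X)$ is injective on indecomposable rigid objects $X\notin\add\Sigma T$---is the same starting point as the paper, but from there your plan has a genuine gap: the closed formula for $\rankv F(a,b)$, which you yourself flag as the main obstacle, is never produced, and the two ingredients you propose for the inductive injectivity argument are both false as stated. First, a basic maximal rigid object of $\cc_{n+1}$ has exactly $n$ indecomposable summands, so their socles can never cover all $n+1$ simples of $\ct$; indeed for $T'=(1,1)\oplus(1,2)\oplus\cdots\oplus(1,n)$ every summand has the same socle. So the base case $b=1$ cannot rest on such a covering. Second, rank vectors are not additive along short exact sequences of $\ct$: a short exact sequence in $\ct$ becomes a triangle in $\cc$, and applying $\Hom_\cc(T,-)$ yields a long exact sequence whose connecting morphisms need not vanish (and one must moreover pass to the quotient by maps factoring through $\add\Sigma T$), so $\dim_K\Hom_\cc(T_i,(a,b))$ is in general strictly smaller than the corresponding sum for the subobject and quotient. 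Controlling precisely this failure of additivity is the hard technical content of the problem; the paper handles it through exchange compatibility (Theorem~\ref{t:exchange compatible}, proved by the long Hom-hammock case analysis of Propositions~\ref{p:exchange-n} and~\ref{p:exchange}), which pins down the dimension counts along exchange triangles.

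For comparison, the paper never computes rank vectors explicitly. It proves (Lemma~\ref{l:dimension-vector-vs-mutation}, using exchange compatibility) that the property ``indecomposable $\tau$-rigid modules are determined by their dimension vectors'' is preserved under mutation of the maximal rigid object, verifies the base case for the particular object $T'=(1,1)\oplus\cdots\oplus(1,n)$ by quoting Theorem~1.3 of~\cite{GLS14}, and then reaches an arbitrary $T$ by a finite sequence of mutations; the equivalence between rank vectors and dimension vectors for locally free modules (halving the entry at the loop vertex) finishes the statement. If you want to pursue your route, you would have to actually establish the intersection-number formula for $\rankv F(a,b)$ in the geometric model and deduce injectivity from it directly, replacing both the socle argument and the additivity claim; as it stands, the proposal is a plan whose decisive steps are missing or incorrect.
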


We then develop some applications to the theory of cluster algebras. We refer to Section~\ref{ss: cluster algebra} for the required background for cluster algebras.
In~\cite{FominZelevinsky07}, Fomin and Zelevinsky introduced a family of combinatorial parametrization of cluster monomials by integer vectors: denominator vectors.
The parametrization by denominator vectors was independent of the choice of the coefficient system. One of the important properties of cluster algebras is the so-called {\it Laurent phenomenon}~\cite{FominZelevinsky02}: each cluster variable can be expressed as a Laurent polynomial in the initial cluster variables~$x_1,\cdots, x_n$. As a consequence of the Laurent phenomenon, for each cluster monomial $x$, there exists a unique polynomial $f(x_1,\cdots, x_n)$ which is not divisible by any $x_i$ such that 
\[x=\frac{f(x_1,\cdots, x_n)}{x_1^{d_1}\cdots x_n^{d_n}}.
\]
The {\it denominator vector} of $x$ is defined to be 
\[\opname{den}(x)=(d_1,\cdots,d_n)^{\opname{tr}}\in \Z^n,
\] 
where $(d_1,\cdots,d_n)^{\opname{tr}}$ is the transpose of $(d_1,\cdots, d_n)$.
Inspired by Lusztig's parameterization of canonical bases in quantum groups, Fomin and Zelevinsky proposed the following denominator conjecture ({\it cf.} Conjecture 7.6 of ~\cite{FominZelevinsky07}).
\begin{conjecture*}
Different cluster monomials have different denominator vectors. In particular, the denominator vectors of cluster variables in a cluster form a basis of $\mathbb{Q}^n$.
\end{conjecture*}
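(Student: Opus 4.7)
The plan is to prove the conjecture for cluster algebras of type $\mathrm{C}$ (and hence types $\mathrm{A}$ and $\mathrm{B}$, whose cluster combinatorics coincide) using the categorification by cluster tubes, reducing distinctness of denominator vectors to Theorem~\ref{t:thm1}.

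The first step is a denominator theorem identifying the denominator vector of each non-initial cluster variable with the rank vector of the corresponding $\tau$-rigid module. Given a basic maximal rigid object $T=\bigoplus_{i=1}^n T_i$ of the cluster tube $\cc$ with $\Gamma=\End_{\cc}(T)$, the Buan-Marsh-Vatne bijection assigns to each non-initial cluster variable an indecomposable rigid object $M\in\cc$ whose image under $\Hom_{\cc}(T,-)$ is an indecomposable $\tau$-rigid $\Gamma$-module. I would prove $\opname{den}(x_M)=\rankv M$ by induction on the minimal length of a mutation sequence from $T$ to a cluster containing $x_M$, tracking the effect of a single mutation on both sides via the exchange triangle in $\cc$ and the corresponding short exact sequence in $\mod\Gamma$. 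The crucial point is non-cancellation: the numerator of the Laurent expansion of $x_M$ must be coprime to each $x_i^{(\rankv M)_i}$, and verifying this requires a positivity argument together with a careful analysis of submodule filtrations of $M$.

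With the denominator theorem in hand, distinctness of denominator vectors of cluster variables is an immediate consequence of Theorem~\ref{t:thm1}. For a cluster monomial corresponding to a rigid object $X=\bigoplus M_j^{a_j}$, additivity of the rank vector on direct sums of locally free modules yields $\opname{den}(\prod x_{M_j}^{a_j})=\sum a_j\rankv M_j$, so that distinctness for monomials reduces to the injectivity of $X\mapsto\rankv X$ on rigid objects of $\cc$. This in turn follows from Theorem~\ref{t:thm1} once one verifies that different multisets of indecomposable summands produce different integer combinations of their rank vectors, which can be achieved by a support argument within a fixed cluster.

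For the basis claim, I would show that the $n$ rank vectors $\rankv T_1,\ldots,\rankv T_n$ attached to a cluster $T=\bigoplus T_i$ are $\Q$-linearly independent by induction on the distance from the initial cluster: a single mutation at $T_k$ produces a new summand fitting into an exchange triangle, which translates into a unimodular change on the tuple of rank vectors. The principal difficulty is the denominator theorem at an arbitrary, possibly non-acyclic, initial seed; the acyclic case is covered by the Caldero-Chapoton type formula of~\cite{ZhouZhu14}, but in general no such closed formula is available, and one must certify non-cancellation directly. This is precisely why Theorem~\ref{t:thm1} is formulated with rank vectors rather than dimension vectors: the rank vector is engineered to record exactly the exponents appearing in the denominator, so that it matches on the nose.
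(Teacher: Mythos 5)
Your first step (the denominator theorem $\opname{den}(\mathbb{X}_M)=\rankv F(M)$ via induction on mutation sequences plus a non-cancellation/positivity argument) is essentially the paper's route: there the induction is Lemma~\ref{l:t-denominator}, the non-cancellation is encoded in the ``positive condition'', and the key technical input is the exchange-compatibility of all indecomposable rigid objects (Theorem~\ref{t:exchange compatible}), not an analysis of submodule filtrations. The genuine problems are in the two later steps. For the monomial part, Theorem~\ref{t:thm1} (= Theorem~\ref{t:rank-vector-of-tau-rigid}) only says that \emph{indecomposable} $\tau$-rigid modules are separated by their rank vectors; it does not give injectivity of $X\mapsto \rankv F(X)$ on arbitrary rigid objects. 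Your ``support argument within a fixed cluster'' at best compares two monomials supported on the \emph{same} cluster; the real difficulty is distinguishing cluster monomials coming from \emph{different} clusters, and nothing in your outline (or in the paper) handles this -- the paper explicitly proves only that distinct cluster \emph{variables} have distinct denominator vectors (Corollary~\ref{c:different-denominator}) and leaves the full monomial conjecture open. Note also that additivity $\opname{den}(\prod \mathbb{X}_{M_j}^{a_j})=\sum a_j\rankv F(M_j)$ breaks down as soon as the monomial involves an initial variable, since $\opname{den}(x_i)=-e_i$ while $F(\Sigma T_i)=0$.

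The basis claim is also not proved by your argument. Denominator (rank) vectors do not transform linearly under mutation: the exchange relation gives, componentwise, a tropical relation $d_i(M_k)+d_i(M_k^*)=\max\{d_i(B),d_i(B')\}$ -- this is exactly what exchange compatibility expresses -- and the maximizing side can differ from component to component, so a single mutation is \emph{not} a unimodular change of the tuple of rank vectors, and linear independence is not automatically propagated along a mutation sequence. The paper's actual proof of Theorem~\ref{t:linearly-independence} is quite different: it passes to the $\tau$-tilting module $F(M)$ over $A=\Gamma/\langle e_M\rangle$, uses the identity $G_{F(M)}(A)^{\opname{tr}}D_{F(M)}(A)=C(\End_A(F(M)))$ of Lemma~\ref{l:g-d-duality}, the invertibility of the $G$-matrix, and the non-degeneracy of the Cartan matrix of the gentle algebra $\End_A(F(M))$ (Theorem~\ref{t:algbra-maximal-tube}, Schr\"oer--Zimmermann, and Holm's criterion Lemma~\ref{l:Cartan-matrix-gentle}). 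Some replacement for this Cartan-matrix argument is needed; the mutation-induction you propose does not supply it.
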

To our best knowledge, the denominator conjecture is still open widely. It has not yet been checked for cluster algebras of finite type. The linearly independence of denominator vectors is only known for certain cluster algebras with respect to acyclic initial seeds. More precisely, it has been verified by Caldero and Keller~\cite{CalderoKeller} for acyclic cluster algebras associated to quivers~({\it cf.} also~\cite{RupelStella}), by Sherman and Zelevinsky~\cite{SZ} for cluster algebras of rank $2$ and by Fomin and Zelevinsky~\cite{FominZelevinsky07} for cluster algebra of finite type with bipartite initial seeds. In this paper, we confirm the conjecture as follows ({\it cf.}~Theorem~\ref{t:linearly-independence}, Remark~\ref{r:linearly-independence-A} and Remark~\ref{r:linearly-independence-B}).
\begin{theorem}~\label{t:thm2}
Let $\mathcal{A}$ be a cluster algebra of type $\mathrm{A}_n, \mathrm{B}_n$ or $\mathrm{C}_n$ and $\mathrm{y}=\{y_1,\cdots,y_n\}$ an arbitrary cluster of $\mathcal{A}$. Then the denominator vectors $\opname{den}(y_1),\cdots, \opname{den}(y_n)$ are linearly independent over $\mathbb{Q}$.
\end{theorem}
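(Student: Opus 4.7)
The plan is to establish the theorem for type $\mathrm{C}_n$ and deduce the remaining cases. Since types $\mathrm{B}_n$ and $\mathrm{C}_n$ share the same cluster combinatorics, their denominator vectors (with respect to compatible initial seeds) coincide, so the $\mathrm{B}$ case reduces to the $\mathrm{C}$ case; type $\mathrm{A}_n$ can be handled by running the same categorical strategy with the cluster category of Dynkin type $\mathrm{A}_n$ in place of the cluster tube. I therefore concentrate on type $\mathrm{C}_n$.

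By the Buan--Marsh--Vatne correspondence, fix the initial seed as a basic maximal rigid object $T = \bigoplus_{i=1}^n T_i$ of the cluster tube $\cc_{n+1}$, with endomorphism algebra $\Gamma = \End_\cc(T)$. An arbitrary cluster $\mathrm{y}=\{y_1,\ldots,y_n\}$ corresponds to a basic maximal rigid object $T' = \bigoplus_{j=1}^n T'_j$. Decompose $T' = T'_{\mathrm{com}} \oplus T'_{\mathrm{new}}$, where $T'_{\mathrm{com}}$ is the maximal common direct summand with $T$ (indexed by some $I \subseteq \{1,\ldots,n\}$) and no indecomposable summand of $T'_{\mathrm{new}}$ lies in $\add T$. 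The denominator theorem proved earlier in the paper should then give
\[\opname{den}(y_j) = \begin{cases} -e_i & \text{if } T'_j \cong T_i,\ i \in I, \\ \rankv \Hom_\cc(T, T'_j) & \text{if } T'_j \text{ is a summand of } T'_{\mathrm{new}}, \end{cases}\]
with each module $M_j := \Hom_\cc(T, T'_j)$ in the second case an indecomposable $\tau$-rigid $\Gamma$-module.

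The vectors $\{-e_i : i \in I\}$ are obviously independent, and by Theorem~\ref{t:thm1} the rank vectors $\rankv M_j$ with $T'_j \in T'_{\mathrm{new}}$ are pairwise distinct. To obtain the full linear independence I would show that these rank vectors project to linearly independent elements of $\mathbb{Q}^{\{1,\ldots,n\}\setminus I}$. The natural strategy is to verify that $M := \bigoplus_{T'_j \in T'_{\mathrm{new}}} M_j$ descends to a basic support $\tau$-tilting module over the quotient of $\Gamma$ by the idempotent ideal at the vertices in $I$; independence of the rank vectors of its indecomposable summands then follows by combining Theorem~\ref{t:thm1} (distinctness) with a dimension count against the rank of the quotient algebra, or, alternatively, by transferring sign-coherence of $g$-vectors (expected to form a basis via a parallel categorical result of this paper) through a unitriangular change-of-basis to rank vectors.

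The main obstacle is precisely this structural reduction: identifying $M$ as support $\tau$-tilting over the correct quotient algebra. Since $\cc$ admits maximal rigid but no cluster-tilting objects, the usual Keller--Reiten-type bijection between cluster-tilting objects and support $\tau$-tilting modules does not apply directly, and one must exploit the $2$-Calabi--Yau property of $\cc$ together with the explicit combinatorial description of maximal rigid objects in the cluster tube. Establishing this controlled comparison between mutations in $\cc$ and $\tau$-tilting mutations in $\mod \Gamma$ is the genuine novelty of the argument and, I expect, the hardest step of the proof.
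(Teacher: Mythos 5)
Your setup agrees with the paper's strategy (use Theorem~\ref{t:denominator-thm} to identify denominator vectors with rank vectors, kill the coordinates at the common summands via Theorem~\ref{t:d-conjecture}(2), and reduce to the new summands), but the decisive step is missing, and what you offer in its place does not work. Pairwise distinctness of the rank vectors (Theorem~\ref{t:rank-vector-of-tau-rigid}) together with ``a dimension count against the rank of the quotient algebra'' does not imply linear independence: $r$ pairwise distinct vectors in $\mathbb{Q}^r$ can easily be linearly dependent, so nothing forces the square matrix of rank vectors to be invertible. Your alternative --- transferring independence of $g$-vectors to rank vectors through a ``unitriangular change-of-basis'' --- is also unsubstantiated: the actual relation between the $G$-matrix and the matrix of dimension vectors of a $\tau$-tilting module is $G_{F(M)}(A)^{\opname{tr}}D_{F(M)}(A)=C(\End_A(F(M)))$ (Lemma~\ref{l:g-d-duality}), i.e.\ the transition matrix is the Cartan matrix of $\End_A(F(M))$, which is neither unitriangular nor a priori invertible. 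The entire content of the paper's proof is precisely to show this Cartan matrix is non-degenerate: $F(M)$ is $\tau$-tilting over the gentle algebra $A=\Gamma/\langle e_M\rangle$, so $\End_A(F(M))$ is gentle by Schr\"{o}er--Zimmermann, it is a quotient of $\End_\cc(M\oplus\Sigma T_M)$ and hence (Theorem~\ref{t:algbra-maximal-tube}, Remark~\ref{r:cluster-tilted-A}) has no oriented cycle of even length with full relations, so Holm's criterion (Lemma~\ref{l:Cartan-matrix-gentle}) gives non-degeneracy; since $G_{F(M)}(A)$ is invertible over $\Z$, the $D$-matrix is non-degenerate and the rank vectors are independent. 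Without some argument of this kind your proof does not close.

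Two further remarks. The step you single out as the ``genuine novelty'' --- realizing $M$ as a support $\tau$-tilting module over the quotient of $\Gamma$ at the idempotent of the common part --- is not the hard part: the bijection between basic maximal rigid objects of $\cc$ and basic support $\tau$-tilting $\Gamma$-modules is already available (Theorem~\ref{t:LiuXie-ChangZhangZhu}), and Adachi--Iyama--Reiten then make $F(M)$ a $\tau$-tilting module over $\Gamma/\langle e_{T_M}\rangle$; the difficulty sits entirely in the invertibility of the resulting $D$-matrix. Finally, a small but real slip: the initial cluster corresponds to $\Sigma T$, not to $T$, so the common part of $T'$ must be taken with $\add\Sigma T$ (the case $\opname{den}(y_j)=-e_i$ occurs when $T'_j\cong\Sigma T_i$), while the formula $\opname{den}(y_j)=\rankv\Hom_\cc(T,T'_j)$ applies to the summands outside $\add\Sigma T$. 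Your reductions of type $\mathrm{B}$ (via the Reading--Stella duality of $d$-vectors) and type $\mathrm{A}$ (same argument with cluster-tilted algebras of type $\mathrm{A}$, which are gentle) are consistent with the paper's remarks, but both inherit the same gap.
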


As mentioned already, our proof of Theorem \ref{t:thm2} is based on interpreting denominator vectors in terms of representations of the endomorphism algebra $\Gamma$ corresponding to a basic maximal rigid object $T$ in the cluster tube $\cc$. We now describe this interpretation more precisely. Recall that an object $M$ is {\it finitely presented} by $T$ if there is a triangle $T_1^M\to T_0^M\to M\to \Sigma T_1^M$ with $T_1^M, T_0^M\in \add T$. Denote by $\opname{pr} T$ the subcategory of $\cc$ consisting of objects which are finitely presented by $T$. There is an equivalence 
 \[F:=\Hom_\cc(T,?): \opname{pr} T/\add \Sigma T\xrightarrow{\sim} \mod \Gamma,
 \]
 where $\opname{pr}T/\add \Sigma T$ is the additive quotient of $\opname{pr} T$ by morphisms factorizing through $\add \Sigma T$.  Moreover, for an indecomposable rigid object $M\not\in \add \Sigma T$, $F(M)$ is an indecomposable $\tau$-rigid $A$-module~({\it cf.}~Section~\ref{s:endomorphism-algebra}). 
 
 As in \cite{BuanMarshVatne}, we associate a  matrix $B_T=(b_{ij})\in M_n(\Z)$ to each basic maximal rigid object $T=\bigoplus_{i=1}^nT_i\in \cc$~(see Section~\ref{ss:cluster-tubes}). The matrix $B_T$ can also be constructed from the quiver $Q_T$~({\it cf.}~Lemma~\ref{l:skew-symmetrizable-matrix-via-quiver}).
 The main result of ~\cite{BuanMarshVatne} shows that the cluster tube $\cc$ admits a cluster structure for cluster algebras of type $\mathrm{C}_n$. In particular, $B_T$ is a skew-symmetrizable matrix of type $\mathrm{C}_n$ and there is a bijection from the set of indecomposable rigid objects of $\cc$ to the set of cluster variables of a cluster algebra of type $\mathrm{C}_n$. As in customary these days, we work with the $n$-regular tree $\mathbb{T}_n$ whose edges are labeled by the numbers $1,2,\cdots, n$, so that the $n$ edges emanating from each vertex carry different labels. We fix a vertex $t_0\in \mathbb{T}_n$. We refer to a family of independent variables $\mathrm{x}_0=\{x_1,\cdots, x_n\}$ as the initial cluster and $B_T$ as the exchange matrix at $t_0$. In other words, we refer to $(\mathrm{x}_0, B_T)$ as the {\it initial seed} associated to the vertex $t_0\in \mathbb{T}_n$, so that a cluster pattern is given. Denote by $\mathcal{A}_{T}$ ({\it resp.} $\mathcal{A}_{T,pr}$) the cluster algebra without coefficients ({\it resp.} with principal coefficients) associated with this cluster pattern ({\it cf. }~Section~\ref{ss: cluster algebra} for details). 
 
 Denote by $\mathbb{X}_?$ the bijection given in~\cite{BuanMarshVatne} such that $\Sigma T$ corresponds to the initial cluster of the cluster algebra $\mathcal{A}_T$.
 For each non-initial cluster variable $x$ of $\mathcal{A}_T$, there is a unique indecomposable rigid object $M\not\in \add \Sigma T$ such that $\mathbb{X}_M=x$.
We obtain a denominator theorem for the cluster algebra $\mathcal{A}_T$~({\it cf.} Theorem~\ref{t:denominator-thm}).
\begin{theorem}~\label{t:thm3}
 For each indecomposable rigid object $M\not\in \add \Sigma T$, we have
 \[\opname{den}(\mathbb{X}_M)=\opname{\underline{rank}} F(M).
 \]
 \end{theorem}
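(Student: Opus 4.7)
The plan is to establish a Caldero--Chapoton style cluster character formula for $\mathbb{X}_M$ and then to read off its denominator vector by a careful analysis of possible cancellations. Such a formula -- an analogue, for the cluster tube $\cc$ and an arbitrary basic maximal rigid object $T$, of the formulas of Caldero--Chapoton, Palu and Zhou--Zhu -- should express $\mathbb{X}_M$ as a Laurent polynomial
\[
\mathbb{X}_M \;=\; \frac{1}{\prod_{i=1}^n x_i^{r_i}}\,\sum_{\mathbf{e}} \chi\bigl(\Gr_{\mathbf{e}}(F(M))\bigr)\,\prod_{i=1}^n x_i^{\psi_i(\mathbf{e})},
\]
where $(r_1,\ldots,r_n)=\rankv F(M)$, the sum ranges over the rank vectors $\mathbf{e}$ of the locally free submodules of $F(M)$, and each $\psi_i(\mathbf{e})\ge 0$ is a linear function of $\mathbf{e}$ encoding the relevant Euler pairing with respect to $T$. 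I would either invoke this formula from a later section of the paper or derive it directly from the embedding of $\cc$ into an ambient $2$-Calabi--Yau category with a cluster tilting object, transporting Palu's formula along $F$ and recording the effect of the locally free structure of $\Gamma$.

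Granting such an expansion, the inequality $\opname{den}(\mathbb{X}_M)\le \rankv F(M)$, componentwise, is immediate from the shape of the formula. The real content of Theorem~\ref{t:thm3} is therefore to prove the opposite inequality, \ie that no simplification between the numerator and the formal denominator can occur. Concretely, for each fixed $k\in\{1,\ldots,n\}$ I need to exhibit a rank vector $\mathbf{e}^{(k)}$ with $\chi(\Gr_{\mathbf{e}^{(k)}}(F(M)))\ne 0$ and $\psi_k(\mathbf{e}^{(k)})=0$. The two extremal strata $\mathbf{e}=0$ and $\mathbf{e}=\rankv F(M)$ contribute a single point each, hence have Euler characteristic one; the task is to check, vertex by vertex, that one of these (or an intermediate stratum chosen using the structure of the cluster tube) has vanishing $k$-th exponent. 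For this I would translate back through the equivalence $F$ and build the candidate submodules as images of suitable morphisms from summands of $T$, using the locally free structure of $F(M)$.

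The main obstacle is precisely this non-cancellation step, and it is here that the switch from dimension vectors to rank vectors becomes essential. Because $\Gamma$ is in general not $k$-elementary -- the idempotents $e_i\Gamma e_i$ need not equal the base field -- submodules of a locally free module may fail to be locally free, and counting them via dimension vectors would spoil the denominator match. The rank vector framework, together with Theorem~\ref{t:thm1}, restores enough rigidity to make the combinatorics work: indecomposable $\tau$-rigid modules are pinned down by their rank vectors, which in turn forces the extremal terms in the cluster character expansion to behave as expected. A second, more technical, difficulty is to verify that the functions $\psi_i$ appearing in the formula really split the rank vector $\rankv F(M)$ in the required way; this should follow from a direct computation using the presentation triangle $T_1^M\to T_0^M\to M\to \Sigma T_1^M$ of $M$ by $T$. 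Once both points are settled, the equality $\opname{den}(\mathbb{X}_M)=\rankv F(M)$ is immediate.
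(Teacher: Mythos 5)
Your strategy rests on a Caldero--Chapoton type expansion of $\mathbb{X}_M$ with respect to an \emph{arbitrary} basic maximal rigid object $T$ of $\cc$, but no such formula is available to you: the paper itself points out that an analogue of the Caldero--Chapoton formula is known only for a special acyclic seed (Zhou--Zhu), and it proves no cluster character for general $T$ in this part of the work. You cannot simply ``transport Palu's formula along $F$'': the cluster tube has no cluster-tilting objects, only maximal rigid ones, the cluster algebra involved is skew-symmetrizable of type $\mathrm{C}$ rather than skew-symmetric, and $\Gamma=\End_{\cc}(T)$ has a loop with $e_1\Gamma e_1\cong K[\rho]/(\rho^2)$, so neither Palu's setting nor the usual submodule-Grassmannian formalism applies without substantial new work. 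Even granting a well-defined locally free character, you would still have to prove that it computes the Buan--Marsh--Vatne bijection $\mathbb{X}_?$; since $\mathbb{X}_?$ is defined only through compatibility with mutations, that identification forces you to verify a multiplication/exchange formula for your character along all exchange triangles, which is at least as hard as the statement you are proving. Finally, the non-cancellation step --- exhibiting for every $k$ a stratum with $\chi\neq 0$ and $\psi_k(\mathbf{e})=0$, and indeed the claim $\psi_i(\mathbf{e})\ge 0$ itself --- is exactly the hard content and is left as a declaration of intent; the appeal to Theorem~\ref{t:thm1} does not supply it, since injectivity of rank vectors on indecomposable $\tau$-rigid modules says nothing about which exponents occur in the numerator of $\mathbb{X}_M$.

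For comparison, the paper takes a different and more elementary route, following Buan--Marsh--Reiten: it shows by induction along mutations, starting from $\Sigma T$, that every cluster variable has a ``$T$-denominator'' $t_M$ built from $\dim_K\Hom_{\cc}(T_i,M)$ (halved at the loop vertex), with numerator satisfying a positivity condition that precludes cancellation. The key categorical input is Theorem~\ref{t:exchange compatible} (every indecomposable rigid object is exchange compatible, proved by a Hom-hammock case analysis), which gives $t_{M_k}t_{M_k^*}=\lcm(t_B,t_{B'})$ in each exchange relation. If you wish to salvage your approach, you would first have to establish the cluster character for arbitrary $T$ in the cluster tube, which in this setting is an independent, and harder, result than Theorem~\ref{t:thm3}.
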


Now we turn to the cluster algebra $\mathcal{A}_{T,pr}$.
For cluster algebras with principal coefficients,
two other kinds of integer vectors have played important role in the structure theory: the $g$-vectors parametrizing cluster variables and the $c$-vectors parametrizing the coefficients. 
It is interesting to find categorical interpretations of $g$-vectors and $c$-vectors for cluster algebras which admit categorifications.
  The following result provides an interpretation of $c$-vectors of $\mathcal{A}_{T,pr}$ in terms of representations of algebras arising from cluster tubes~({\it cf.}~Theorem~\ref{t:positive-c-vectors} and  for the interpretation of $g$-vectors, we refer to Theorem~\ref{t:g-vector and index}).
\begin{theorem}~\label{t:thm4}
The positive $c$-vectors of $\mathcal{A}_{T,pr}$ are precisely the rank vectors of indecomposable $\tau$-rigid $\Gamma$-modules.
\end{theorem}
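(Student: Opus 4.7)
The plan is to combine three ingredients: sign-coherence of $c$-vectors for $\mathcal{A}_{T,pr}$, the categorical interpretation of $g$-vectors as indices of indecomposable rigid summands of maximal rigid objects established in Theorem~\ref{t:g-vector and index}, and the tropical duality of Nakanishi--Zelevinsky relating the $C$-matrix to the $G$-matrix. Sign-coherence, which for skew-symmetrizable cluster algebras follows from the work of Gross--Hacking--Keel--Kontsevich, ensures that the notion of ``positive $c$-vector'' is well-defined, so that each $c$-vector is either a nonnegative integer vector or its negative.

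First, I would invoke Nakanishi--Zelevinsky tropical duality: for each seed $t\in \T_n$, the matrices $C_t$ and $G_t$ are related by a symmetrized inverse-transpose formula involving the skew-symmetrizer $D$ of $B_T$. Together with sign-coherence, this identifies each positive $c$-vector at $t$ with the vector dual to the index of one of the indecomposable summands of the basic maximal rigid object $T_t$ at $t$, where $T_t$ corresponds to $t$ under the Buan--Marsh--Vatne bijection of \cite{BuanMarshVatne}. Applying Theorem~\ref{t:g-vector and index}, which realizes $g$-vectors as indices of indecomposable rigid summands in $\cc$, the positive $c$-vectors at $t$ are thereby expressed in terms of the indecomposable summands of $T_t$ read off from the initial seed $(T,B_T)$.

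Second, I would translate these vectors into rank vectors of indecomposable $\tau$-rigid $\Gamma$-modules. The duality between indices of the summands of a basic maximal rigid object and rank vectors of the corresponding modules over its endomorphism algebra---transported from $\End_\cc(T_t)$ back to $\Gamma=\End_\cc(T)$ via the equivalence $F=\Hom_\cc(T,-): \opname{pr} T/\add \Sigma T \xrightarrow{\sim}\mod \Gamma$---produces, for each positive $c$-vector at $t$, an indecomposable $\tau$-rigid $\Gamma$-module with the prescribed rank vector. Surjectivity onto rank vectors of indecomposable $\tau$-rigid modules holds because every indecomposable rigid object of $\cc$ outside $\add \Sigma T$ occurs as a summand of some $T_t$ by the cluster structure of \cite{BuanMarshVatne}, and injectivity is precisely Theorem~\ref{t:thm1}.

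The main obstacle will be implementing the transport of the simples (or dually, the indices) across mutations of maximal rigid objects in $\cc$, since $\cc$ lacks cluster-tilting objects and the standard machinery for $2$-Calabi--Yau categories with cluster-tilting objects (as in Palu or Plamondon) does not apply verbatim. Overcoming this will require the specialized analysis of maximal rigid objects in cluster tubes developed in the earlier sections, together with the mutation-compatibility of the assignment $\mathbb{X}_?$ underlying the denominator theorem, Theorem~\ref{t:denominator-thm}, so that the categorical mutation at $T$ can be matched seed by seed with the $c$- and $g$-vector mutations of $\mathcal{A}_{T,pr}$.
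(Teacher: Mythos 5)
Your overall frame (sign-coherence, the Nakanishi--Zelevinsky relation $G_t^{\opname{tr}}DC_t=D$ from Proposition~\ref{p:property-c-type}, and the identification of cluster-algebra $g$-vectors with indices via Theorem~\ref{t:g-vector and index}, giving $G_t=G_t(\Gamma)$ and hence $C_t=D^{-1}C_t(\Gamma)D$) is exactly the paper's starting point. The genuine gap is in your second step: the ``duality between indices of the summands of a basic maximal rigid object and rank vectors of the corresponding $\Gamma$-modules'' that you invoke is not an available black box --- it is essentially the statement being proved. What the general $\tau$-tilting machinery gives (Proposition~\ref{p:criterion} and the dual-basis description of $C_t(\Gamma)$) is that positive columns of $C_t(\Gamma)$ are \emph{dimension} vectors of certain indecomposable $\Gamma$-modules, and even this fails verbatim here: for the summand $\tilde{M}$ of length $n$ one has $\dim_K\Hom_{\Gamma}(F(\tilde M),F(\tilde M))=2$ (Lemma~\ref{l:morphism of quasi length n}), so the corresponding dual column is only $\tfrac12\dimv F(\tilde M)$, not a dimension vector. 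The reason \emph{rank} vectors appear is precisely the conjugation $C_t=D^{-1}C_t(\Gamma)D$ with $D=\opname{diag}(2,1,\dots,1)$, combined with the fact that length-$n$ summands always occupy the same (first) position under mutation (Lemma~\ref{l:basic-property-cluster-tube}(3)), and with a well-chosen completion $T_{t'}=\tilde M\oplus\tilde N$ satisfying $\Hom_\cc(\tilde N,\tilde M)=0$ so that the criterion of Proposition~\ref{p:criterion} applies to $F(\tilde M)$. None of this is supplied by your proposal, and your diagnosis of the remaining difficulty (transport of simples across mutations, mutation-compatibility of $\mathbb{X}_?$, the denominator theorem) points at the wrong tools: Theorem~\ref{t:denominator-thm} plays no role here.

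A secondary difference: you aim to prove both containments column by column at every vertex $t$ (using that every indecomposable rigid object occurs in some $T_t$ for surjectivity), which would additionally force you to decide which columns of $C_t$ are positive. The paper avoids this by proving only one containment --- each rank vector of an indecomposable $\tau$-rigid module is a positive $c$-vector, via one suitable completion per module --- and then concluding equality from the count $|\opname{cv}^+(\ca_{T,pr})|=n^2$ (Proposition~\ref{p:property-c-type}(2)) together with the injectivity of rank vectors (Theorem~\ref{t:rank-vector-of-tau-rigid}). If you repair the key step above, your two-containment route can be made to work, but the counting argument is noticeably lighter.
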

We remark that the analogue statement of Theorem~\ref{t:thm2} has been proved by N\'{a}jera Ch\'{a}vez for acyclic skew-symmetric cluster algebras~\cite{Chavez13} and skew-symmetric cluster algebras of finite type~\cite{Chavez14}~({\it cf.} also~\cite{Fu17}).

The paper is organized as follows. Section~\ref{l:preliminaries} provides the required background from cluster algebras, $\tau$-tilting theory and cluster tubes. In Section~\ref{ss:exchange-compatible}, we establish the exchange compatibility (Theorem~\ref{t:exchange compatible}) for the cluster tube $\cc$. As applications of the exchange compatibility, we prove Theorem~\ref{t:thm1} in Section~\ref{ss:rank-vector}
and the denominator theorem (Theorem~\ref{t:thm3}) in Section~\ref{ss:denominator-thm}. Section~\ref{ss:denominator-thm-app} consists of certain consequences of the denominator theorem. Among others, Theorem~\ref{t:thm2} is proved. In Section~\ref{s:c-vector}, we obtain categorical interpretations for $g$-vectors and $c$-vectors (Theorem~\ref{t:g-vector and index} and~\ref{t:thm4}).

\subsection*{Convention} 
Let $K$ be an algebraically closed field. Denote by $D=\Hom_K(-,K)$ the duality over $K$. Fix a positive integer $n$, we denote by $e_1,\cdots, e_n$ the standard basis of $\Z^n$. For a matrix $B$, denote by $B^{\opname{tr}}$ the transpose of $B$.
For an object $M$ in a category $\cc$, denote by $|M|$ the number of non-isomorphic indecomposable direct summands of $M$. Denote by $\add M$ the subcategory of $\cc$ consisting of objects which are finite direct sum of direct summands of $M$.  For an object $M$ in a triangulated category $\der$, denote by $\opname{thick}(M)$ the smallest triangulated subcategory of $\der$ containing $M$ which is closed under direct summands.

\section{Preliminaries}~\label{l:preliminaries}
\subsection{Recollection on cluster algebras}~\label{ss: cluster algebra}
We follow ~\cite{FominZelevinsky07}.
For an integer $x$, we use the notation $[x]_+=\max\{x,0\}$ and $\opname{sgn}(x)=\begin{cases}\frac{x}{|x|}& \text{if $x\neq 0$}\\ 0& \text{if $x=0$}\end{cases}$. A {\it semifield} $(\mathbb{P}, \oplus, \circ)$ is an abelian multiplicative group endowed with a binary operation of addition $\oplus$ which is commutative, associative and distributive with respect to the multiplication $\circ$ in $\mathbb{P}$. Let $J$ be a finite set of labels, the {\it tropical semifield} on variables $u_j (j\in J)$ is the abelian group freely generated by the variables $u_j$, endowed with the addition $\oplus$ defined by
\[\prod_{j}u_j^{a_j}\oplus \prod_{j}u_j^{b_j}=\prod_{j}u_j^{\operatorname{min}(a_j,b_j)}.
\]

Let $m, n$ be positive integers with $m\geq n$ and $\mathbb{P}$ the tropical semifield on variables $x_{n+1}, \cdots, x_m$. Let $\Q\mathbb{P}$ be the group algebra of $\mathbb{P}$ with rational coefficients and $\mathcal{F}$ the field of rational functions in $n$ variables with coefficients in $\Q\mathbb{P}$. Recall that an $n\times n$ integer matrix $B$ is {\it skew-symmetrizable} if there is a diagonal matrix $D=\opname{diag}\{d_1, \cdots, d_n\}$ with positive integers $d_1, \cdots, d_n$ such that $DB$ is skew-symmetric. In this case, we call $D$ the {\it skew-symmetrizer} of $B$. A {\it seed} in $\mathcal{F}$ is a pair $(\tilde{B}, \mathrm{x})$, where
\begin{itemize}
\item $\tilde{B}$ is an $m\times n$ integer matrix, whose {\it principal part} $B$, the top $n\times n$ submatrix, is skew-symmetrizable, and
\item $\mathrm{x}=\{x_1, \cdots, x_n\}$ is a free generating set of the field $\mathcal{F}$.
\end{itemize}
We refer to $\mathrm{x}$ as the {\it cluster} and to $\tilde{B}$ as the {\it exchange matrix} of the seed $(\tilde{B}, \mathrm{x})$.

For  any $k=1,\cdots, n$, the {\it seed mutation} $\mu_k(\tilde{B}, \mathrm{x})$ of $(\tilde{B}, \mathrm{x})$ in direction $k$ is the seed $(\tilde{B}', \mathrm{x}')$, where
\begin{itemize}
\item[$\bullet$] an integer $m\times n$ matrix $\tilde{B}'=\mu_k(\tilde{B})=(b_{ij}')$ is given by setting
\[b_{ij}'=\begin{cases}-b_{ij}& \text{if $i=k$ or $j=k$;}\\ b_{ij}+\opname{sgn}(b_{ik})[b_{ik}b_{kj}]_+&\text{else.}\end{cases}
\]
\item[$\bullet$] the cluster $\mathrm{x}'$ is obtained from $\mathrm{x}$ by the {\it exchange relation}: replacing the element $x_k$ with
    \[x_k'=\frac{1}{x_k}(\prod_{i=1}^mx_i^{[b_{ik}]_+}+\prod_{i=1}^mx_i^{[-b_{ik}]_+}).
    \]
\end{itemize}

Let $\mathbb{T}_n$ be the $n$-regular tree, whose edges are labeled by the numbers $1,2,\cdots, n$,
so that the $n$ edges emanating from each vertex carry different labels.  A {\it cluster pattern} is the assignment of a seed $(\tilde{B}_t, \mathrm{x}_t)$ to each vertex $t$ of $\mathbb{T}_n$ such that the seeds
assigned to vertices $t$ and $t'$ linked by an edge labeled $k$ are obtained from each other
by the seed mutation $\mu_k$. For a given initial seed $(\tilde{B}, \mathrm{x})$, a cluster pattern is uniquely determined by an assignment
of $(\tilde{B}, \mathrm{x})$ to a root vertex $t_0\in \mathbb{T}_n$.

Fix a cluster pattern determined by assigning the initial seed $(\tilde{B}, \mathrm{x})$ to the root vertex $t_0$. Let $(\tilde{B}_t, \mathrm{x}_t)$ be the seed associated to $t\in \mathbb{T}_n$.
The {\it clusters} associated with $(\tilde{B}, \mathrm{x})$ are the sets $\mathrm{x}_t$ for each $t\in \mathbb{T}_n$. The {\it cluster variables} are the elements of the clusters.  A {\it cluster monomial} is a monomial in cluster variables all of which belong to the same cluster. 
The {\it cluster algebra $\ca(\tilde{B})=\ca(\tilde{B},\mathrm{x})$ with coefficients} is
the $\Z\mathbb{P}$-subalgebra of $\mathcal{F}$ generated by the cluster variables. A cluster algebra is {of \it skew-symmetric type} if the principal part of its initial exchange matrix is skew-symmetric.

  Now to any vertex $t_0\in\mathbb{T}_n$ and any skew-symmetrizable $n\times n$ matrix $B$, we associate the skew-symmetrizable matrix pattern of format $2n\times n$ such that the initial exchange matrix $\tilde{B}$ has principal part $B$ and its {\it coefficient part}, the bottom $n\times n$ submatrix, is the identity matrix. We refer to this pattern as  the {\it principal coefficients pattern} and to the associated cluster algebra $\ca(\tilde{B})$ as {\it a cluster algebra with principal coefficients}. For each $t\in \mathbb{T}_n$, let $C_t$ be the coefficient part of the exchange matrix $\tilde{B}_t$. Each column vector of $C_t$ is called a {\it $c$-vector} of the cluster algebra $\ca(\tilde{B})$ and $C_t$ is the {\it $C$-matrix} of $\ca(\tilde{B})$ at vertex $t$.
 It has been conjectured by Fomin-Zelevinsky~\cite{FominZelevinsky07} that each $c$-vector of $\ca(\tilde{B})$ is {\it sign-coherent}, that is either all entries non-negative or all entries non-positive. The sign-coherence conjecture has been established in~\cite{GHKK} recently in a full generality. We refer to ~\cite{DWZ10,Fu17,Nagao13, Plamondon11} for a proof of the sign-coherence conjecture for skew-symmetric cases and ~\cite{Demonet10} for a proof for certain skew-symmetrizable cluster algebras.
 
 Let $x_1, \cdots, x_n$ be the initial cluster variables of the cluster algebra $\ca(\tilde{B})$ with principal coefficients. By the Laurent phenomenon, each cluster variable $x_{j,t}$ of the cluster $\mathrm{x}_t$ can be expressed as
\[X_{j,t}(x_1, \cdots, x_{2n})\in \Z[x_1^{\pm}, \cdots, x_n^{\pm}, x_{n+1}, \cdots, x_{2n}].
\]
Set $\deg x_i=e_i$ for $1\leq i\leq n$ and $\deg x_{n+j}=-b_j$ for $1\leq j\leq n$, where $b_j$ is the $j$-th column of the principal part of the initial exchange matrix $\tilde{B}$. In particular, $\Z[x_1^{\pm}, \cdots, x_n^{\pm}, x_{n+1}, \cdots, x_{2n}]$ is a $\Z^n$-graded ring. Fomin and Zelevinsky~\cite{FominZelevinsky07} proved that each $X_{j,t}$ is homogeneous with respect to the $\Z^n$-grading and defined 
\[\deg X_{j,t}=g_{j,t}^{\tilde{B}, t_0}=(g_{1j},\cdots, g_{nj})^{\opname{tr}}\in \Z^n\]
to be the {\it $g$-vector} of the cluster variable $x_{j,t}$. The matrix $G_t=(g_{ij})_{i,j=1}^n$ is called the {\it $G$-matrix} of $\ca(\tilde{B})$ at vertex $t\in \mathbb{T}_n$. 

 Let $B=(b_{ij})$ be an $n\times n$ skew-symmetrizable  matrix. The {\it Cartan counterpart} $A(B)=(c_{ij})$ of $B$ is an $n\times n$ integer matrix such that $c_{ii}=2$ for $i=1, \cdots, n$ and $c_{ij}=-|b_{ij}|$ for $i\neq j$. In particular, $A(B)$ is a generalized Cartan matrix. We refer to \cite{Kac} for the classification of generalized Cartan matrices.
\begin{definition}
The cluster algebra $\ca(\tilde{B})$ is of {\it type $\mathrm{C}$} if there is a vertex $t\in \mathbb{T}_n$ such that the Cartan counterpart of the principal part of $\tilde{B}_t$ is a generalized Cartan matrix of type $\mathrm{C}$.
\end{definition}
 Cluster algebras of type $\mathrm{C}$ have been investigated via different viewpoints. 
 It is well-known that a cluster determines its seed for a cluster algebra of type $\mathrm{C}$~ ({\it cf.}~\cite{FominZelevinsky03}).
 The following proposition summarizes certain results concerning $c$-vectors and $g$-vectors for cluster algebras of type $\mathrm{C}$ and  we refer to~\cite{NS14} for more results on cluster algebras of finite type.
\begin{proposition}~\label{p:property-c-type}
Let $\ca(\tilde{B})$ be a cluster algebra of type $\mathrm{C}$ with principal coefficients, where $\tilde{B}\in M_{2n\times n}(\Z)$. Assume that $\tilde{B}$ has principal part $B$. Then
\begin{itemize}
\item[(1)] Each $c$-vector of $\ca(\tilde{B})$ is sign-coherent;
\item[(2)] Let $\operatorname{cv}(\ca(\tilde{B}))$ be the set of $c$-vectors of $\ca(\tilde{B})$, then $|\operatorname{cv}(\ca(\tilde{B}))|=2n^2$;
\item[(3)] For each $t\in \mathbb{T}_n$, $G_t^{\opname{tr}}DC_t=D$, where $D$ is the skew-symmetrizer of $B$;
\item[(4)] Let $t_0\frac{~k~}{}t_1$ be two adjacent vertices in $\mathbb{T}_n$, and let $\tilde{B}_1=\left(\begin{array}{c}\mu_k(B)\\ E_n\end{array}\right)$. Then for any $t\in \mathbb{T}_n$, the $g$-vectors $g_{l,t}^{\tilde{B}, t_0}=(g_1,\cdots,g_n)^{\opname{tr}}$ and $g_{l,t}^{\tilde{B}_1, t_1}=(g_1',\cdots, g_n')^{\opname{tr}}$ are related as follows:
    \[g_j'=\begin{cases}-g_k &\text{if}~ j=k;\\ g_j+[b_{jk}]_+g_k-b_{jk}\min(g_k,0) & \text{else}.\end{cases}
    \]
\end{itemize}
\end{proposition}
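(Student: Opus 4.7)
The plan is to establish each of the four parts by invoking the corresponding general result from the cluster-algebra literature, as Proposition~\ref{p:property-c-type} is a synthesis of known facts specialised to type~$\mathrm{C}$. For~(1), sign-coherence of $c$-vectors for arbitrary skew-symmetrizable cluster algebras with principal coefficients has been established in full generality by Gross-Hacking-Keel-Kontsevich~\cite{GHKK}, and in the skew-symmetrizable finite type case also by Demonet~\cite{Demonet10}; I would simply cite this result.

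For~(2), I would combine the finite-type classification with the description of $c$-vectors in finite type due to Nakanishi-Stella~\cite{NS14}: the set of $c$-vectors of a cluster algebra of finite Cartan-Killing type $X$ is in bijection with the real roots (positive and negative) of the root system of type $X$. For type~$\mathrm{C}_n$, the positive roots are $\{e_i-e_j,\,e_i+e_j:1\leq i<j\leq n\}\cup\{2e_i:1\leq i\leq n\}$, of which there are $\binom{n}{2}+\binom{n}{2}+n=n^2$, hence $2n^2$ real roots in total.

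For~(3), the identity $G_t^{\opname{tr}}DC_t=D$ is the Nakanishi-Zelevinsky tropical duality, which holds for every skew-symmetrizable cluster algebra with principal coefficients once the $c$-vectors are known to be sign-coherent; granted~(1), this applies in our situation. For~(4), the transformation law for $g$-vectors under a change of initial seed from $t_0$ to the adjacent vertex $t_1$ along the edge labeled $k$ is precisely the formula of Fomin-Zelevinsky in \emph{Cluster algebras IV: coefficients}~\cite{FominZelevinsky07}, valid for arbitrary skew-symmetrizable cluster algebras with principal coefficients, and I would quote it directly.

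The main obstacle is purely bookkeeping: verifying that each general theorem applies verbatim in our specific skew-symmetrizable type~$\mathrm{C}$ setting, so that sign-coherence, tropical duality, and the $g$-vector transformation rule are all inherited without modification. No substantively new calculation is required, and the whole proposition reduces to assembling the appropriate citations.
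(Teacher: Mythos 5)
Your overall strategy---assembling the four parts from known results in the literature---is exactly what the paper does: its proof consists of citing \cite{Demonet10,GHKK} for (1) and (4), \cite{NZ12} together with (1) for (3), and \cite{NS14} for (2). Your treatment of (1)--(3) matches this; in particular your count for type $\mathrm{C}_n$ ($n^2$ positive roots $e_i\pm e_j$, $2e_i$, hence $2n^2$ $c$-vectors) is the intended content behind the citation of \cite{NS14}, and your derivation of (3) from sign-coherence plus Nakanishi--Zelevinsky tropical duality is the paper's derivation.

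The one genuine gap is in (4): the transformation rule you propose to ``quote directly'' from \emph{Cluster algebras IV} is not a theorem there---it is Conjecture 7.12 of \cite{FominZelevinsky07}---so that citation by itself proves nothing. What is true, and what the paper relies on, is that this conjecture is a consequence of sign-coherence of $c$-vectors: \cite{NZ12} show that sign-coherence implies the Conjecture 7.12 transformation law (the same mechanism you already invoke for (3)), and sign-coherence in the skew-symmetrizable setting at hand is supplied by \cite{GHKK} (or by \cite{Demonet10} for these finite-type skew-symmetrizable algebras). So part (4) should be deduced from part (1) via \cite{NZ12}/\cite{GHKK} rather than attributed to Fomin--Zelevinsky; with that correction your argument coincides with the paper's.
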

\begin{proof}
The statements $(1)$ and $(4)$ follow from ~\cite{Demonet10,GHKK}. Part $(3)$ follows from part $(1)$ and ~\cite{NZ12}. Part $(2)$ is a result of~\cite{NS14}.
\end{proof}

\subsection{Recollection on  $\tau$-tilting theory} We follow~\cite{Adachi-Iyama-Reiten}.
Let $A$ be a basic finite dimensional algebra over $K$ and $\mod A$  the category of finitely generated right $A$-modules.  Let $\proj A$ be the full subcategory of $\mod A$ consisting of finitely generated projective $A$-modules. Denote by $\tau$ the Auslander-Reiten translation of $\mod A$.
Let $S_1, \cdots, S_n$ be all the pairwise non-isomorphic simple $A$-modules and $P_1, \cdots, P_n$ the corresponding projective covers respectively.

 An $A$-module $M$ is {\it $\tau$-rigid} provided $\Hom_A(M, \tau M)=0$. Let \[P_1^M\xrightarrow{f}P_0^M\to M\to 0\] be a minimal projective presentation of $M$, then $M$ is $\tau$-rigid if and only if $\Hom_A(f, M)$ is surjective.
 A  {\it $\tau$-rigid pair} is a pair of $A$-modules $(M, P)$ with $M\in \mod  A$ and $P\in \proj A$, such that $M$ is $\tau$-rigid and $\Hom_A(P, M)=0$. A basic $\tau$-rigid pair $(M,P)$ is a {\it support $\tau$-tilting pair} if $|M|+|P|=|A|=n$. In this case, $M$ is a {\it support $\tau$-tilting } $A$-module and $P$ is uniquely determined by $M$.  In particular, if $|M|=|A|$, then $M$ is a {\it $\tau$-tilting} $A$-module. 
 We also call a $\tau$-rigid pair {\it indecomposable} if $|M|+|P|=1$. It was shown in~\cite{Adachi-Iyama-Reiten} that each basic $\tau$-rigid pair can be completed to a support $\tau$-tilting pair. On the other hand, for a basic support $\tau$-tilting pair $(M,P)$, $M$ is a $\tau$-tilting  $A/\langle e_P\rangle$-module, where $e_P$ is the idempotent of $A$ associated to $P$.

 The support $\tau$-tilting $A$-modules have close relation with  $2$-term silting objects in the perfect derived category $\per A$ of $A$. Denote by $\Sigma$ the suspension functor of $\per A$. Recall that an object $Q\in \per A$ is {\it presilting} if $\Hom_{\per A}(Q, \Sigma^iQ)=0$ for all $i>0$. A presilting  object $Q\in \per A$ is {\it silting} if moreover $\opname{thick}(Q)=\per A$. Each basic silting object has exactly $|A|$ indecomposable direct summands and gives rise to a  $\Z$-basis of the Grothendieck group $\opname{G}_0(\per A)$. A silting object $Q$ is {\it $2$-term silting } with respect to $A$ if there is a triangle in $\per A$
 \[P_1^Q\to P_0^Q\to Q\to \Sigma P_1^Q~\text{with} ~P_1^Q, P_0^Q\in \proj A.
 \]
 Let $\opname{s\tau-tilt} A$ be the set of isomorphism classes of support $\tau$-tilting $A$-modules and $\opname{2-silt} A$ the set of isomorphism classes of $2$-term silting objects of $\per A$. In~\cite{Adachi-Iyama-Reiten}, Adachi {\it et al.} established the following bijection.
 \begin{theorem}
 There is a bijection between $\opname{s\tau-tilt} A$ and $\opname{2-silt} A$ given by
 \[M\in \opname{s\tau-tilt} A\mapsto (P_1^M\oplus P\xrightarrow{(f,0)}P_0^M)\in \opname{2-silt} A,
 \]
 where $P_1^M\xrightarrow{f}P_0^M\to M\to 0$ is a minimal projective presentation of $M$ and $(M,P)$ is the support $\tau$-tilting pair determined by $M$.
 \end{theorem}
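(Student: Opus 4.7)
The plan is to establish the bijection by constructing maps in both directions and verifying they are mutually inverse.

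For the forward direction, given a support $\tau$-tilting pair $(M,P)$ with minimal projective presentation $P_1^M \xrightarrow{f} P_0^M \to M \to 0$, I would form the two-term complex $Q := (P_1^M \oplus P \xrightarrow{(f,0)} P_0^M)$ concentrated in degrees $-1$ and $0$. To show $Q \in \opname{2-silt} A$, the main point is presilting, i.e.\ $\Hom_{\per A}(Q, \Sigma Q) = 0$. I would apply $\Hom_{\per A}(-, \Sigma Q)$ to the standard triangle $P_1^M \oplus P \to P_0^M \to Q \to \Sigma(P_1^M \oplus P)$ to reduce this to the surjectivity of the map
\[
\Hom_A(P_0^M, M) \to \Hom_A(P_1^M \oplus P, M).
\]
The $P_1^M$-component is surjective precisely because $M$ is $\tau$-rigid (by the characterization that $\Hom_A(f,M)$ is surjective), and the $P$-component vanishes automatically since $\Hom_A(P,M) = 0$. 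Then the silting condition $\opname{thick}(Q) = \per A$ follows from the general fact that a presilting object with $|Q| = |A|$ indecomposable summands is automatically silting; this uses that the classes $[Q_i]$ in $\go(\per A)$ coincide up to a unipotent change of basis with $[P_i]$.

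For the inverse direction, given $Q = (P_1 \xrightarrow{g} P_0) \in \opname{2-silt} A$, I would split off from $P_1$ the largest direct summand $P$ on which $g$ vanishes, so that $Q \cong Q' \oplus \Sigma P$ with $Q' = (P_1' \xrightarrow{g'} P_0)$ and $g'$ having no zero components after removing common summands. Setting $M := \cok g' = H^0(Q')$, I would show that $(M,P)$ is a support $\tau$-tilting pair: the $\tau$-rigidity of $M$ is extracted from $\Hom_{\per A}(Q', \Sigma Q') = 0$ by running the previous argument in reverse, the condition $\Hom_A(P,M) = 0$ comes from $\Hom_{\per A}(\Sigma P, \Sigma Q') = \Hom_A(P,M) = 0$, and the equality $|M| + |P| = |A|$ follows from the number of indecomposable summands of $Q$.

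Finally, I would check the two constructions are mutually inverse. One direction is essentially tautological: starting from $(M,P)$, the complex $Q$ built above has $H^0(Q) = M$ and the projective summand $P$ is recovered as $\Sigma^{-1}$ of the split part. The other direction requires uniqueness of the minimal presentation: given $Q = (g': P_1' \to P_0)$ in reduced form, one must verify that $P_1' \xrightarrow{g'} P_0 \to M \to 0$ is a \emph{minimal} projective presentation of $M = \cok g'$, which again reduces to the no-common-summand reduction and the presilting condition.

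The main obstacle will be the passage from the presilting condition to $\tau$-rigidity in the inverse map, specifically arguing that the projective presentation one reads off from $Q'$ really is the minimal one; once minimality is in place, the Auslander--Reiten-type characterization of $\tau$-rigidity via surjectivity of $\Hom_A(f,M)$ translates cleanly back and forth with the vanishing $\Hom_{\per A}(Q',\Sigma Q') = 0$.
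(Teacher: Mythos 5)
This statement is quoted in the paper from Adachi--Iyama--Reiten and is given there without proof, so the only comparison available is with the standard argument of \cite{Adachi-Iyama-Reiten}; your overall architecture (encode $(M,P)$ in the two-term complex of a minimal presentation, translate the presilting condition through the Auslander--Smal{\o} criterion, then match the completeness conditions by counting summands) is the same as theirs. The forward reduction is fine: applying $\Hom_{\per A}(-,\Sigma Q)$ to the triangle $P_1^M\oplus P\to P_0^M\to Q\to \Sigma(P_1^M\oplus P)$ does identify $\Hom_{\per A}(Q,\Sigma Q)$ with the cokernel of $\Hom_A(P_0^M,M)\to \Hom_A(P_1^M\oplus P,M)$, and surjectivity is exactly $\tau$-rigidity plus $\Hom_A(P,M)=0$.

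There are, however, two genuine gaps. First, the step ``a presilting object with $|Q|=|A|$ indecomposable summands is automatically silting'' is not a quotable general fact, and your proposed justification does not work: even if the classes $[Q_1],\dots,[Q_n]$ form a $\Z$-basis of $\go(\per A)$ (they are in any case not related to $[P_1],\dots,[P_n]$ by a \emph{unipotent} change of basis --- think of $Q=\Sigma A$, whose summands have classes $-[P_i]$), generating the Grothendieck group does not imply $\opname{thick}(Q)=\per A$. The actual proof of this step for two-term complexes is Proposition 3.3 of \cite{Adachi-Iyama-Reiten} and rests on the Bongartz-type completion of two-term presilting complexes (equivalently, of $\tau$-rigid pairs), a substantial ingredient entirely absent from your sketch; without it the forward map is not shown to land in $\opname{2-silt}A$, and the counting argument $|M|+|P|=|A|$ in the inverse direction is likewise unsupported. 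Second, in the inverse direction you yourself flag, but do not resolve, the key verification that after discarding the $\Sigma P$-part and contractible summands the complex $P_1'\xrightarrow{g'}P_0$ yields a \emph{minimal} presentation of $M=\cok g'$ (in particular $\ker g'\subseteq \opname{rad} P_1'$, and that $H^0$ of an indecomposable non-shifted-projective summand is nonzero and indecomposable). Since both the extraction of $\tau$-rigidity from $\Hom_{\per A}(Q',\Sigma Q')=0$ and the proof that the two constructions are mutually inverse hinge on exactly this point, the proposal as it stands does not constitute a proof of the bijection; it reproduces the easy half of the AIR argument and leaves the two hard steps open.
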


 Let $M$ be a $\tau$-rigid $A$-module and $P_1^M\to P_0^M\to M\to 0$ a minimal projective presentation of $M$. The {\it index} of $M$ is defined to be
 \[\opname{ind}(M)=[P_0^M]-[P_1^M]\in \go(\per A),
 \]
 where $[X]$ stands for the image of $X$ in the Grothendieck group $\go(\per A)$.
 The {\it $g$-vector $g(M)$} of $M$ is the coordinate vector of $\opname{ind}(M)$ with respect to the canonical basis $[P_1], \cdots, [P_n]$ of $\go(\per A)$. It is known that different $\tau$-rigid modules have different $g$-vectors~({\it cf.}~\cite{Adachi-Iyama-Reiten, DehyKeller08}). We may extend the definition of $g$-vectors to any $\tau$-rigid pair $(M,P)$ by setting $g(M,P)=g(M)-g(P)\in \Z^n$.

For a given basic support $\tau$-tilting pair $(M,P)$ with decomposition of indecomposable $\tau$-rigid pairs, say $(M,P)=X_1\oplus\cdots\oplus X_n$, the {\it $\opname{G}$-matrix} of $M$ with respect to the decomposition is defined as 
\[G_M(A)=(g(X_1), \cdots, g(X_n)),
\]
which is invertible over $\Z$ ({\it cf.}~\cite{Adachi-Iyama-Reiten}). The {\it $\opname{C}$-matrix $C_M(A)$} associated to $M$ with respect to the decomposition is the inverse of the transpose of the $\opname{G}$-matrix $G_M(A)$, and its column vectors are called  {\it $c$-vectors} of $A$ associated to $M$~({\it cf.}~\cite{Fu17}). This notion is closely related to the $c$-vectors of cluster algebras provided that $A$ is a cluster-tilted algebra. We also remark that the $c$-vectors associated to $M$ do not depend on the decomposition of the support $\tau$-tilting pair $(M,P)$.

 Let $\der^b(\mod A)$ be the bounded derived category of $\mod A$ and $\go(\der^b(\mod A))$ the Grothendieck group of $\der^b(\mod A)$.
  Note that, as $A$ is finite dimensional, we have $\per A\subseteq\der^b(\mod A)$.
 The Euler bilinear form $\langle-, -\rangle:\opname{G}_0(\per A)\times \opname{G}_0(\der^b(\mod A))\to K$ given by
 \[\langle[P], [X]\rangle=\sum_{i\in \Z}(-1)^i\dim_K \Hom_{\der^b(\mod A)}(P, \Sigma^i X), 
 \]
 is non-degenerate, where~ $P\in \per A$~and~$X\in \der^b(\mod A)$. For each basic support $\tau$-tilting module $M$, let $Q_M=\bigoplus\limits_{i=1}^nQ_i^M$ be the corresponding $2$-term silting object in $\per A$ which gives rise to a basis $[Q_1^M], \cdots, [Q_n^M]$ of $\opname{G}_0(\per A)$. Let $[Q_1^M]^*, \cdots, [Q_n^M]^*\in \opname{G}_0(\der^b(\mod A))$ be the basis dual to $[Q_1^M], \cdots, [Q_n^M]$ with respect to  the Euler bilinear form $\langle-,-\rangle$.
Then the $c$-vectors associated to $M$ coincide with the coordinate vectors of the dual basis $[Q_1^M]^*, \cdots, [Q_n^M]^*$ with respect to  the canonical basis $[S_1], \cdots, [S_n]$ of $\opname{G}_0(\der^b(\mod A))$ given by simple $A$-modules. 

Recall that a non-zero integer vector is {\it positive} if all of its entries are nonnegative.
Each $c$-vector of $A$ is sign-coherent and each positive $c$-vector is the dimension vector of an indecomposable $A$-module.
We have the following criterion of positive $c$-vectors~({\it cf.}~Proposition 3.3 of~\cite{Fu17}).
\begin{proposition}~\label{p:criterion}
A vector $\underline{c}\in \Z^{n}$ is a positive $c$-vector for a finite dimensional  $K$-algebra $A$ if and only if there is a $2$-term silting object $Q\in \per A$ and an indecomposable $A$-module $M$ such that \[\Hom_{\der^b(\mod A)}(Q, \Sigma^i M)=\begin{cases}K&i=0;\\ 0&\text{otherwise.}\end{cases}\]
 In this case, $\underline{c}=\dimv M$.
\end{proposition}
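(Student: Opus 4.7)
I would prove the two implications of the proposition separately. Both pivot on the description of $c$-vectors as coordinate vectors, in the basis of simple $A$-modules, of the dual basis $\{[Q_j^L]^*\}$ of $\go(\der^b(\mod A))$ to the basis $\{[Q_i^L]\}$ of $\go(\per A)$ under the Euler bilinear form $\langle-,-\rangle$, where $Q_L=\bigoplus Q_i^L$ is the $2$-term silting object associated to a basic support $\tau$-tilting module $L$ via the Adachi--Iyama--Reiten bijection.

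The ``if'' direction is essentially a bookkeeping argument with the Euler form. Given a $2$-term silting object $Q=\bigoplus_{i=1}^n Q_i$ and an indecomposable $A$-module $M$ with $\Hom_{\der^b(\mod A)}(Q,\Sigma^i M)=K$ for $i=0$ and zero otherwise, splitting the Hom space by summands forces a unique index $j$ with $\Hom(Q_j,M)=K$, while $\Hom(Q_\ell,M)=0$ for $\ell\neq j$ and all higher shifts vanish. A direct calculation yields $\langle[Q_\ell],[M]\rangle=\delta_{j\ell}$, and the non-degeneracy of $\langle-,-\rangle$ forces $[M]=[Q_j]^*$ in $\go(\der^b(\mod A))$. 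Expanding both sides in the basis of simples identifies $\dimv M$ as the $j$-th $c$-vector associated to the support $\tau$-tilting module corresponding to $Q$, and positivity is automatic because $M$ is a non-zero $A$-module.

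The ``only if'' direction is the substantial one. A positive $c$-vector $\underline c$ arises as the coordinate vector of some $[Q_j^L]^*$. I would invoke the $t$-structure on $\der^b(\mod A)$ induced by the $2$-term silting object $Q_L$: its heart $\ch$ is equivalent to $\mod\End_{\per A}(Q_L)$ via the functor $\Hom(Q_L,?)$, and its simple objects $T_1,\dots,T_n$ satisfy $[T_j]=[Q_j^L]^*$. Because $Q_L$ is $2$-term, $\ch$ is an HRS tilt of $\mod A$ along a torsion pair $(\mathcal T,\mathcal F)$, so each simple of $\ch$ lies either in $\mathcal T\subseteq\mod A$ (contributing a $c$-vector with non-negative entries) or in $\mathcal F[1]\subseteq\Sigma(\mod A)$ (contributing a $c$-vector with non-positive entries). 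Positivity of $\underline c$ forces $T_j\in\mathcal T$, making it an indecomposable $A$-module $M$ (indecomposable because it is simple in $\ch$) with $\dimv M=\underline c$. The claimed Hom condition $\Hom(Q_i^L,\Sigma^k M)=K\delta_{ij}\delta_{k0}$ is then the defining duality of $T_j$ as a simple of $\ch$ with respect to $Q_L$.

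The main obstacle is the second direction: constructing the HRS $t$-structure on $\der^b(\mod A)$ induced by a $2$-term silting object, identifying its simples with the dual basis elements, and, most crucially, establishing the dichotomy that each such simple lies either in the ``module'' side $\mathcal T$ or in the ``shifted module'' side $\mathcal F[1]$, so that the sign of the associated $c$-vector matches this dichotomy. Once these facts from the theory of silting $t$-structures and $\tau$-tilting (as in~\cite{Adachi-Iyama-Reiten}) are available, the identification of $M$ as an indecomposable $A$-module and the verification of the Hom condition both drop out cleanly.
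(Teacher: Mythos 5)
Your statement is one the paper does not prove at all: it is quoted verbatim from Proposition 3.3 of \cite{Fu17}, so there is no in-paper argument to compare against, only the preparatory paragraph identifying the $c$-vectors associated to a support $\tau$-tilting module with the coordinates, in the basis of simples, of the basis of $\go(\der^b(\mod A))$ dual to $[Q_1^M],\dots,[Q_n^M]$ under the Euler form. Measured against that framework, your proposal is essentially a correct self-contained proof. The ``if'' direction is exactly the Euler-form bookkeeping the paper's setup invites: the hypothesis kills all shifted Hom spaces (for $2$-term $Q$ only $i=0,1$ can contribute anyway), forces $\langle[Q_\ell],[M]\rangle=\delta_{j\ell}$ for a unique summand $Q_j$, and perfectness of the pairing gives $[M]=[Q_j]^*$, hence $\underline{c}=\dimv M$; the only cosmetic caveat is that you should take $Q$ basic (or pass to its basic version), since the $C$-matrix is defined from a basic support $\tau$-tilting pair and a repeated summand with nonzero Hom would contradict one-dimensionality. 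The ``only if'' direction via the $2$-term silting $t$-structure is sound: the heart is the HRS tilt of $\mod A$ at the torsion pair $(\mathcal{T},\mathcal{F})$ with $\mathcal{T}=\{X:\Hom(Q_L,\Sigma X)=0\}$, $\mathcal{F}=\{X:\Hom(Q_L,X)=0\}$, it is equivalent to $\mod\End(Q_L)$ by the silting theorem of Buan--Zhou (Brenner--Butler type), its simples $T_1,\dots,T_n$ satisfy $\Hom(Q_i^L,\Sigma^kT_j)=K\delta_{ij}\delta_{k0}$ and hence $[T_j]=[Q_j^L]^*$, and since every simple of an HRS-tilted heart is either torsion or a shifted torsion-free module, sign-coherence does the rest: positivity forces $T_j\in\mathcal{T}$, giving the desired indecomposable module. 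These imported facts are standard and citable, so I see no genuine gap; just make the references (Buan--Zhou, Adachi--Iyama--Reiten) explicit where you currently wave at ``the theory of silting $t$-structures,'' since that dichotomy and the identification of the simples are precisely the nontrivial inputs carrying the converse.
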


For a $\tau$-tilting $A$-module $M=\bigoplus\limits_{i=1}^nM_i$ with decomposition of indecomposable modules, we define 
\[D_M(A):=(\dimv M_1,\cdots, \dimv M_n)\in M_n(\mathbb{Z})
\]
and call $D_M(A)$ the {\it $\opname{D}$-matrix} of $M$ with respect to the decomposition $M=\bigoplus\limits_{i=1}^nM_i$. Recall that we also have a $\opname{G}$-matrix $G_M(A)$ associated to $M$ with respect to the decomposition $M=\bigoplus\limits_{i=1}^nM_i$. The following result gives a relation between $G_M(A)$ and $D_M(A)$ for a $\tau$-tilting $A$-module $M$~({\it cf.} Proposition~5.3 of ~\cite{Adachi-Iyama-Reiten}).
\begin{lemma}~\label{l:g-d-duality}
Let $A$ be a finite dimensional $K$-algebra and $M=\bigoplus\limits_{i=1}^nM_i$ a $\tau$-tilting $A$-module. Then
\[G_M(A)^{\opname{tr}}D_M(A)=C(\End_A(M)),
\]
where $C(\End_A(M))$ is the Cartan matrix of the endomorphism algebra $\End_A(M)$.
\end{lemma}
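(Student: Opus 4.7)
The plan is to compute the $(i,j)$-entry of $G_M(A)^{\opname{tr}}D_M(A)$ directly and identify it with $\dim_K \Hom_A(M_i, M_j)$, which is the $(i,j)$-entry of the Cartan matrix of $\End_A(M)$ under the standard correspondence $\Hom_A(M, M_i) \leftrightarrow P_i^{\End_A(M)}$.

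First I would reinterpret the matrix product via the Euler form. By definition, the $j$-th column of $G_M(A)$ is $g(M_j)=(g_{1j},\dots,g_{nj})^{\opname{tr}}$ where $\opname{ind}(M_j)=\sum_k g_{kj}[P_k]$ in $\opname{G}_0(\per A)$, and the $j$-th column of $D_M(A)$ is $\dimv M_j$, whose $k$-th entry equals $\dim_K \Hom_A(P_k, M_j)$. Since each $P_k$ sits in degree $0$, the Euler form reduces to $\langle [P_k],[M_j]\rangle=\dim_K \Hom_A(P_k, M_j)$. Consequently, the $(i,j)$-entry of $G_M(A)^{\opname{tr}}D_M(A)$ is
\[
g(M_i)^{\opname{tr}}\cdot \dimv M_j = \langle \opname{ind}(M_i),[M_j]\rangle
= \dim_K \Hom_A(P_0^{M_i}, M_j) - \dim_K \Hom_A(P_1^{M_i}, M_j).
\]

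Next I would exploit the $\tau$-rigidity of $M$. From the minimal projective presentation $P_1^{M_i}\xrightarrow{f_i} P_0^{M_i}\to M_i\to 0$, applying $\Hom_A(-,M_j)$ yields the left exact sequence
\[
0\to \Hom_A(M_i,M_j)\to \Hom_A(P_0^{M_i},M_j)\xrightarrow{\Hom(f_i,M_j)} \Hom_A(P_1^{M_i},M_j).
\]
The key point is that the last map is surjective. Indeed, because $M=\bigoplus_{k=1}^n M_k$ is $\tau$-tilting, $M$ is itself $\tau$-rigid, so by the characterization recalled just before the definition of $\tau$-rigid pair, the map $\Hom_A(f_i, M):\Hom_A(P_0^{M_i},M)\to \Hom_A(P_1^{M_i},M)$ is surjective; restricting to the summand $M_j$ of $M$ gives the desired surjectivity of $\Hom_A(f_i, M_j)$. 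Thus one obtains a short exact sequence, and taking dimensions yields
\[
\dim_K \Hom_A(M_i,M_j)= \dim_K \Hom_A(P_0^{M_i},M_j)-\dim_K \Hom_A(P_1^{M_i},M_j),
\]
which, combined with the previous display, identifies the $(i,j)$-entry of $G_M(A)^{\opname{tr}}D_M(A)$ with $\dim_K \Hom_A(M_i,M_j)$.

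Finally I would invoke the standard tilting-type equivalence $\Hom_A(M,-):\add M \xrightarrow{\sim} \proj \End_A(M)$, under which $M_i$ corresponds to the indecomposable projective $\End_A(M)$-module $P_i^{\End_A(M)}$, and $\Hom_{\End_A(M)}(P_i^{\End_A(M)}, P_j^{\End_A(M)})\cong \Hom_A(M_i,M_j)$. Hence $\dim_K \Hom_A(M_i,M_j)$ is exactly the $(i,j)$-entry of $C(\End_A(M))$, completing the proof. The only non-formal ingredient is the surjectivity step, which I expect to be the main obstacle to state cleanly; everything else is a direct translation between the Euler pairing, indices, and Cartan entries.
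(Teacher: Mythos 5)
Your proposal is correct and follows essentially the same route as the paper: the paper's proof likewise uses the short exact sequence $0\to \Hom_A(M_i,M_j)\to \Hom_A(P_0^{M_i},M_j)\to \Hom_A(P_1^{M_i},M_j)\to 0$ obtained from $\tau$-rigidity of $M$, and then identifies the $(i,j)$-entry of $C(\End_A(M))$ with $\dim_K\Hom_A(M_i,M_j)$. You merely make explicit the Euler-form computation of the entries of $G_M(A)^{\opname{tr}}D_M(A)$, which the paper leaves implicit.
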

\begin{proof}
Let $P_1^{M_i}\to P_0^{M_i}\to M_i\to 0$ be a minimal projective presentation of $M_i$. Then $M=\bigoplus\limits_{i=1}^nM_i$ is a $\tau$-tilting module implies that for any $1\leq i,j\leq n$, we have a short exact sequence
\[0\to \Hom_A(M_i,M_j)\to \Hom_A(P_0^{M_i}, M_j)\to \Hom_A(P_1^{M_i}, M_j)\to 0.\]
Now the result follows from the fact that the $(i,j)$-entry of $C(\End_A(M))$  is $\dim_K\Hom_A(M_i, M_j)$.
\end{proof}

\subsection{Cluster tubes}~\label{ss:cluster-tubes}
We follow~\cite{BuanMarshVatne}. Fix a non-negative integer $n$. Let $\ct$ be a tube of rank $n+1$ and $\cc$ the associated cluster tube of rank $n+1$. The Auslander-Reiten translation $\tau$ of $\der^b(\ct)$ induces the Auslander-Reiten translation of $\cc$, which we will still denote it by $\tau$. Note that, as $\cc$ is a $2$-Calabi-Yau triangulated category, we have $\tau=\Sigma$.
Let $X$ and $Y$ be indecomposable objects of $\ct$. Recall that we have identified the objects of $\ct$ with the ones of $\cc$.  By definition of $\cc$ and the fact that $\ct$ is hereditary, we have
\[\Hom_{\cc}(X, Y)=\Hom_{\der^b(\ct)}(X, Y)\oplus \Hom_{\der^b(\ct)}(X, \tau^{-1}\circ \Sigma Y).
\]
Following~\cite{BuanMarshVatne}, morphisms in $\Hom_{\der^b(\ct)}(X, Y)$ are called {\it $\ct$-maps} from $X$ to $Y$ and morphisms in $\Hom_{\der^b(\ct)}(X, \tau^{-1}\circ \Sigma Y)$ are called {\it $\der$-maps} from $X$ to $Y$. Each morphism from $X$ to $Y$ in $\cc$ can be written as the sum of a $\ct$-map with a $\der$-map. It is also well-known that the composition of two $\ct$-maps is also a $\ct$-map, the composition of a $\ct$-map with a $\der$-map is a $\der$-map, and the composition of two $\der$-maps is zero, and no $\ct$-map can factor through a $\der$-map ({\it cf.} \cite{BMR}).

The following lemma is useful ({\it cf.} Lemma $2.1$ of ~\cite{BuanMarshVatne}).
\begin{lemma}~\label{l:compute-morphism-cluster-tube}
Let $X, Y$ be indecomposable objects of $\ct$,  we have
\[\Hom_{\cc}(X, Y)\cong \Hom_{\ct}(X, Y)\oplus D\Hom_{\ct}(Y, \tau^2 X).
\]
\end{lemma}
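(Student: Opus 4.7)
The plan is to compute each summand in the orbit-category decomposition
\[
\Hom_{\cc}(X,Y)\;=\;\Hom_{\der^b(\ct)}(X,Y)\;\oplus\;\Hom_{\der^b(\ct)}(X,\tau^{-1}\Sigma Y)
\]
that is recalled just before the lemma, and identify them with the two direct summands in the statement. Since $X$ and $Y$ sit in the heart $\ct\subset \der^b(\ct)$ and $\ct$ is hereditary, the first summand is immediate: $\Hom_{\der^b(\ct)}(X,Y)=\Hom_{\ct}(X,Y)$ because there are no shifts between objects of the heart of degree zero. So the main work is the $\der$-map summand.

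For the $\der$-map piece I would invoke the Auslander--Reiten--Serre formula in $\der^b(\ct)$. Because $\ct$ is hereditary with Serre functor $\tau\circ\Sigma$, the duality takes the form
\[
\Hom_{\der^b(\ct)}(A,\Sigma B)\;\cong\;D\Hom_{\der^b(\ct)}(B,\tau A)
\]
for any $A,B\in\der^b(\ct)$. Applying this with $A=X$ and $B=\tau^{-1}Y$ (using that $\tau$ and $\Sigma$ commute, so $\tau^{-1}\Sigma Y=\Sigma\tau^{-1}Y$) gives
\[
\Hom_{\der^b(\ct)}(X,\tau^{-1}\Sigma Y)\;\cong\;D\Hom_{\der^b(\ct)}(\tau^{-1}Y,\tau X)\;\cong\;D\Hom_{\der^b(\ct)}(Y,\tau^2 X),
\]
where the last isomorphism is just the action of the autoequivalence $\tau$. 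Since $Y$ and $\tau^2 X$ are again objects of the heart $\ct$, this last group is $D\Hom_{\ct}(Y,\tau^2 X)$, yielding the desired second summand.

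Combining the two identifications produces exactly the claimed decomposition
\[
\Hom_{\cc}(X,Y)\;\cong\;\Hom_{\ct}(X,Y)\;\oplus\;D\Hom_{\ct}(Y,\tau^2 X).
\]
The only potential subtlety — and really the sole step worth double-checking — is the precise form of the Serre functor on $\der^b(\ct)$ for a tube and the compatibility between the AR translation on $\ct$ (which acts on indecomposables by $(a,b)\mapsto (a-1,b)$) and the one on $\der^b(\ct)$ used here. Once this is verified, the rest of the argument is formal and uses only the orbit-category definition of $\cc$ together with heredity of $\ct$.
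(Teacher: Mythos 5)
Your argument is correct: the orbit-category decomposition recalled before the lemma, together with Serre duality $\Hom_{\der^b(\ct)}(A,\Sigma B)\cong D\Hom_{\der^b(\ct)}(B,\tau A)$ (the Serre functor of $\der^b(\ct)$ being $\tau\circ\Sigma$, with $\tau$ an autoequivalence commuting with $\Sigma$), gives exactly $\Hom_{\der^b(\ct)}(X,\tau^{-1}\Sigma Y)\cong D\Hom_{\ct}(Y,\tau^2X)$, and the degree-zero piece is $\Hom_{\ct}(X,Y)$ since $X,Y$ lie in the heart. The paper itself offers no proof but cites Lemma 2.1 of Buan--Marsh--Vatne, whose argument is this same Serre-duality computation, so your route matches the intended one.
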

An obvious consequence is that the existence of a non-zero $\der$-map from $X$ to $Y$ is equivalent to the existence of a non-zero $\ct$-map from $Y$ to $\tau^2X$.

 For each indecomposable object $X=(a, b)\in \ct$, the infinite sequence  of irreducible maps
\[\mathbf{R}_{(a,b)}=(a, b)\to (a, b+1)\to \cdots \to (a, b+j)\to \cdots
\]
is called a {\it ray} starting in $X$ and the infinite sequence of irreducible maps 
\[\mathbf{C}_{(a,b)}=\cdots\to (a-j, b+j)\to \cdots \to (a-1, b+1)\to (a,b)
\]
 is called a {\it coray} ending in $X$. We also denote by \[\mathbf{R}_{(a,b)}=\{(a, b+j)~|~j\geq 0\}~\text{and} ~\mathbf{C}_{(a,b)}=\{(a-j, b+j)~|~j\geq 0\}.\]

For each indecomposable object $X\in \ct$ with  length $l(X)\leq n$, the {\it wing $\cw_{X}$ determined by $X$} is the set of indecomposables whose position in the AR-quiver is in the triangle with $X$ on top.
We also denote them by $X^{\sqsubset}$ the support of $\Hom_{\ct}(X,-)$ in $\ct$. Namely, $X^{\sqsubset}$ consists of indecomposable objects $Y$ of $\ct$ such that $\Hom_{\ct}(X,Y)\neq 0$. Dually, we may define $~^{\sqsupset}X$ to be the support of $\Hom_{\ct}(-,X)$ in $\ct$. As in \cite{BuanMarshVatne}, the $\Hom$-hammock of an indecomposable object $X$ is the support of $\Hom_\cc(X, -)$. By Lemma~\ref{l:compute-morphism-cluster-tube}, we clearly know  that an indecomposable object $Y\in \cc$ satisfies that $\Hom_{\cc}(X,Y)\neq 0$ if and only if $Y\in X^{\sqsubset}\cup~^{\sqsupset}\tau^2X$ ({\it cf.} Figure 1.).
\vspace{0.2cm}
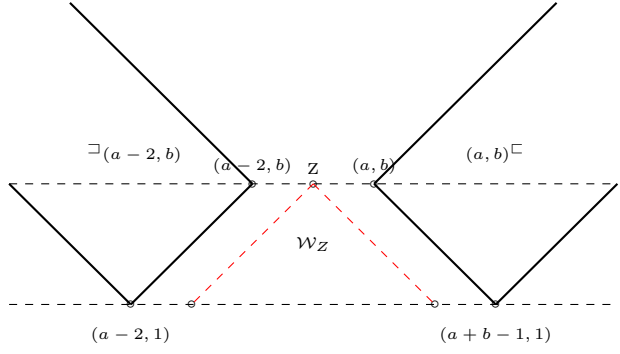
\begin{figure}
\begin{tikzpicture}[
    scale=4,axis/.style={ very thick, ->, >=stealth'},
    arrow/.style={thick, ->, >=stealth'},
    important line/.style={thick},
    dashed line/.style={dashed, thin},
    pile/.style={thick, ->, >=stealth', shorten <=2pt, shorten
    >=2pt},
    every node/.style={color=black}
    ]
  
    \draw[dashed line](-1,0) --(1,0)[right];
    \draw[dashed line](-1,.4)--(1,.4)[right];
    \draw[important line](-.6,0)--(-.2,.4)[right]node[above]{\tiny{$(a-2,b)$}};
    \node at(-.6,0){\tiny{$\circ$}};
    \node at(-.2,.4){\tiny{$\circ$}};
    \node at (-.6,-.1){\tiny{$(a-2,1)$}};
    \node at(0,.4){\tiny{$\circ$}};
    \node at(0,.45){\tiny{Z}};
    \node at(.4,0){\tiny{$\circ$}};
    \node at(-.4,0){\tiny{$\circ$}};
    \node at(0,.2){\tiny{$\cw_Z$}};
    \draw[dashed line,red](0,.4)--(-.4,0)[left];
    \draw[dashed line,red](0,.4)--(.4,0)[left];
    \draw[important line](.6,0)--(.2,.4)[left]node[above]{\tiny{$(a,b)$}};
    \node at(.2,.4){\tiny{$\circ$}};
    \node at(.6,0){\tiny{$\circ$}};
    \node at(.6, -.1){\tiny{$(a+b-1,1)$}};
    \draw[important line](-.6,0)--(-1,.4)[right];
    \draw[important line](-.2,.4)--(-.8,1)[right];
    \draw[important line](.6,0)--(1,.4)[right];
    \draw[important line](0.2, 0.4)--(.8,1)[right];
      \node at(-.6,.5){\tiny{$~^\sqsupset(a-2,b)$}};
      \node at(.6,.5){\tiny{$(a,b)^{\sqsubset}$}};
\end{tikzpicture}
\caption{For an indecomposable $X=(a,b)$, the $\Hom$-hammock is illustrated by the full lines.}
\end{figure}

Recall that a rigid object $T\in \cc$ is called a {\it cluster-tilting object} if $\Hom_{\cc}(T, \Sigma Y)=0$ implies that $Y\in \add T$. It was proved in ~\cite{BuanMarshVatne} that the cluster tube $\cc$ has no cluster-tilting objects but only maximal rigid objects. Moreover, the following description of maximal rigid objects was given.
\begin{lemma}~\label{l:basic-property-cluster-tube}
\begin{itemize}
\item[(1)] An indecomposable object $M$ in $\cc$ is rigid if and only if it has length $l(M)\leq n$;
\item[(2)] Every basic maximal rigid object in $\cc$ has exactly $n$ indecomposable direct summands;
\item[(3)] Each basic maximal rigid object has exactly one indecomposable direct summand with length $n$;
\item[(4)] There is a bijection between the set of maximal rigid objects
in $\cc$ and the set 
\[\{tilting \ modules\ of\ K\vec{A}_{n}\}\times\{1,2,\cdots, n+1\},\]
where $\vec{A}_{n}$  is a linearly oriented quiver with the Dynkin diagram $A_{n}$ as its underlying
graph.
\end{itemize}
\end{lemma}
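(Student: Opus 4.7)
I would prove the four assertions in the order (1), then the uniqueness half of (3), then the existence half of (3), and finally (2) together with (4); the single technical input used throughout is the Hom formula of Lemma~\ref{l:compute-morphism-cluster-tube}.

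\emph{Part (1).} Since $\cc$ is $2$-Calabi--Yau with $\Sigma=\tau$, rigidity of an indecomposable $M=(a,b)$ is equivalent to $\Hom_\cc(M,\tau M)=0$. Applying Lemma~\ref{l:compute-morphism-cluster-tube} to the pair $((a,b),(a-1,b))$ collapses this to the single condition $\Hom_\ct((a,b),(a-1,b))=0$, since the two summands appearing there are exchanged by the autoequivalence $\tau^2$. The forward hammock $(a,b)^{\sqsubset}$ in a tube of rank $n+1$ is readily described, and $(a-1,b)=(a+n,b)$ lies in it precisely when the length $b$ is large enough to wrap around the tube, i.e.\ when $b\geq n+1$. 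Hence rigidity is equivalent to $l(M)\leq n$.

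\emph{Uniqueness in (3).} The same computation, applied to two hypothetical distinct length-$n$ summands $(a,n)$ and $(a',n)$, requires $\Hom_\ct((a,n),(a'-1,n))=0$. When $a\neq a'\pmod{n+1}$ the residue of $a'-1-a$ lies in $\{0,1,\dots,n-1\}$, which places $(a'-1,n)$ inside the hammock $(a,n)^{\sqsubset}$ and produces a nonzero $\ct$-map, violating rigidity. So a basic maximal rigid object contains at most one summand of length $n$.

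\emph{Existence in (3).} Let $T=\bigoplus T_i$ be basic rigid with all $T_i=(a_i,b_i)$ of length $<n$; I would show that some $(a,n)$ can be adjoined preserving rigidity. For each $i$ the condition that $T_i\oplus(a,n)$ be rigid translates via the same Hom computation to $T_i\in\cw_{(a,n)}$, equivalently $a_i-a\in\{0,1,\dots,n-b_i\}\pmod{n+1}$; thus $T_i$ obstructs exactly $b_i$ residues of $a$. A bookkeeping argument, exploiting that the summands $T_i$ are themselves pairwise Ext-orthogonal (so their obstruction sets cannot overlap arbitrarily), shows that the union of obstructed residues is a proper subset of $\{1,\dots,n+1\}$. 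Hence $T$ fails to be maximal, so every basic maximal rigid object has at least one, and therefore exactly one, indecomposable summand of length $n$.

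\emph{Parts (2) and (4).} Fix a basic maximal rigid $T$ with unique length-$n$ summand $(a,n)$. The hammock analysis above forces every other indecomposable summand of $T$ to lie in the wing $\cw_{(a,n)}$. This wing is a standardly embedded copy of the Auslander--Reiten quiver of $K\vec{A}_n$, under an equivalence $\add\cw_{(a,n)}\simeq\mod K\vec{A}_n$ sending $(a,n)$ to the length-$n$ projective-injective $P$; because all lengths involved are at most $n$ there are no $\der$-maps between objects of $\cw_{(a,n)}$, so $\Hom_\cc$ and $\Ext^1_\cc$ restrict to $\Hom$ and $\Ext^1$ over $K\vec{A}_n$. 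Maximality of $T$ in $\cc$ therefore corresponds to maximality of the associated basic rigid $K\vec{A}_n$-module, which is precisely being a tilting module. Since every tilting module of $K\vec{A}_n$ has exactly $n$ indecomposable summands, this gives $|T|=n$, proving~(2). Moreover any tilting module of $K\vec{A}_n$ automatically contains $P$, because $P$ is projective-injective and hence $P\oplus M$ is rigid for any rigid $M$; so the assignment $T\mapsto(T|_{\cw_{(a,n)}},\,a)$ is well defined, and its inverse sends $(M,a)$ to the image of $M$ under the identification $\mod K\vec{A}_n\simeq\add\cw_{(a,n)}$, establishing the bijection of~(4).

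\emph{Main obstacle.} The delicate step is the existence half of~(3): one must convert the pairwise Ext-orthogonality of the summands of $T$ into a global statement that their obstructed residues do not cover $\{1,\dots,n+1\}$. The remaining steps are essentially routine once the Hom formula of Lemma~\ref{l:compute-morphism-cluster-tube} and the wing equivalence $\add\cw_{(a,n)}\simeq\mod K\vec{A}_n$ are in place.
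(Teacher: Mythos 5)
The paper itself does not prove this lemma; it is recalled verbatim from \cite{BuanMarshVatne}, so there is no in-paper argument to compare against. Your proposal essentially reconstructs the Buan--Marsh--Vatne route (rigidity via the Hom formula, a unique length-$n$ summand, reduction to the wing, tilting modules over $K\vec{A}_n$), and parts (1), the uniqueness half of (3), and the overall shape of (2) and (4) are sound. However, there are two genuine gaps. First, the existence half of (3) -- the step you yourself flag as the main obstacle -- is not actually proved: you only assert that ``a bookkeeping argument'' shows the obstructed residues cannot cover $\{1,\dots,n+1\}$. The claim is true, but it needs an argument, and there is a short one you could supply: if all summands $T_i=(a_i,b_i)$ of a basic rigid $T$ have $b_i\leq n-1$, pick a summand $T_1$ of maximal length; for any other summand, $\min(b_i,b_1)=b_i$, so the vanishing of $\Ext^1_\cc(T_i,T_1)$ forces the residue $a_1+b_1$ to lie outside the entire support interval of $T_i$, and it also lies outside the support of $T_1$; hence $a=a_1+b_1+1$ gives a compatible $(a,n)$. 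Without something of this kind the proof of (3), and with it (2) and (4), is incomplete.

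Second, the justification you give for the wing reduction is false as stated: it is not true that there are no $\der$-maps between objects of $\cw_{(a,n)}$. For instance $(a,n)$ itself carries a nonzero $\der$-endomorphism (compare Lemma~\ref{l:morphism of quasi length n}), and already in $\cc_3$ one has $\Hom_\cc((1,1),(2,1))\cong K$ given by a $\der$-map although both objects lie in $\cw_{(1,2)}$ and the $\ct$-Hom between them vanishes. So $\Hom_\cc$ and $\Ext^1_\cc$ do \emph{not} restrict to $\Hom$ and $\Ext^1$ over $K\vec{A}_n$ on the wing. What is true, and what saves your conclusion, is the symmetrized formula
\[
\Ext^1_\cc(X,Y)\;\cong\;\Ext^1_\ct(X,Y)\oplus D\Ext^1_\ct(Y,X),
\]
obtained from Lemma~\ref{l:compute-morphism-cluster-tube} together with $\Sigma=\tau$ and the Auslander--Reiten formula; since the wing is extension-closed in $\ct$ and equivalent to $\mod K\vec{A}_n$, this still identifies rigid objects of $\cc$ supported in $\cw_{(a,n)}$ with rigid $K\vec{A}_n$-modules, which is all you need for the maximal-rigid/tilting correspondence in (2) and (4). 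You should replace the ``no $\der$-maps'' claim by this Ext computation; the remaining ingredients of (4) (compatibility with $(a,n)$ is equivalent to lying in $\cw_{(a,n)}$, every tilting $K\vec{A}_n$-module contains the projective-injective, maximal rigid $=$ tilting over a hereditary algebra) are fine.
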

 \begin{remark}
If $n=0$, the zero object is the unique maximal rigid object in $\cc_1$. If $n=1$, there are precisely two basic maximal rigid objects in $\cc_2$, {\it i.e.} $(1,1)$ and $(1,2)$.
\end{remark}
The following result is a direct consequence of Lemma~\ref{l:compute-morphism-cluster-tube} and the $\Hom$-hammock.
\begin{lemma}~\label{l:dim}
\begin{itemize}
\item[(1)] Let  $M, N$ be two indecomposable rigid objects with $l(N)=n$. We have 
\[\dim_K\Hom_\cc(N, M)=\begin{cases}2&\text{if $M\not\in \cw_{\tau N}$;}\\ 0& \text{if $M\in \cw_{\tau N}$}.\end{cases}
\]
\item[(2)] Let $X$ be an indecomposable rigid object. For any indecomposable rigid object $Y\in \cw_{\tau^{-1}X}$, we have $\Hom_\cc(Y,X)=0$.
\end{itemize}
\end{lemma}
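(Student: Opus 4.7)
The plan is to deduce both parts from the decomposition
\[\Hom_\cc(X,Y)\;\cong\;\Hom_\ct(X,Y)\oplus D\Hom_\ct(Y,\tau^2 X)\]
supplied by Lemma~\ref{l:compute-morphism-cluster-tube}, combined with the standard subquotient criterion for Hom-spaces between uniserial indecomposables in the tube. Concretely, a nonzero map $(a_1,b_1)\to(a_2,b_2)$ in $\ct_{n+1}$ is determined by choosing a quotient $(a_1+j,b_1-j)$ of the source that embeds as a submodule $(a_2,b_1-j)$ of the target; such a map exists precisely when the residue of $a_2-a_1$ modulo $n+1$ lies in $[\max(0,b_1-b_2),\,b_1-1]$, and when it does the Hom-space is $1$-dimensional. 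In particular, whenever both objects are rigid (so that $b_1,b_2\leq n$) each Hom-space has dimension at most one, and hence $\dim_K\Hom_\cc(X,Y)\leq 2$ throughout the proof.

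For part~(1), I would write $N=(c,n)$ and $M=(a,b)$ with $b\leq n$, so that $\tau^2 N=(c-2,n)$. A short arithmetic check, applying the criterion above to the two summands of Lemma~\ref{l:compute-morphism-cluster-tube}, shows that $\Hom_\ct(N,M)$ and $\Hom_\ct(M,\tau^2 N)$ are nonzero under the \emph{same} condition
\[(a-c)\bmod(n+1)\;\in\;[n-b,\,n-1],\]
equivalently that the support $\{a,a+1,\dots,a+b-1\}$ of $M$ contains the vertex $c-2$. Since $l(\tau N)=n$, the support of $\tau N=(c-1,n)$ is the complement of $\{c-2\}$ in $\Delta_{n+1}$, so the above condition is precisely the negation of $M\in\cw_{\tau N}$. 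Both summands therefore contribute one-dimensional spaces simultaneously when $M\notin\cw_{\tau N}$ and both vanish when $M\in\cw_{\tau N}$, giving the claimed dichotomy.

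For part~(2), I would write $X=(a,b)$ and $Y=(a',b')\in\cw_{\tau^{-1}X}=\cw_{(a+1,b)}$; the wing inclusion forces both $a'$ and $a'+b'-1$ to lie in $\{a+1,\dots,a+b\}$, whence $a'-a\in\{1,\dots,b-b'+1\}$ and $b'\leq b\leq n$. Plugging these constraints into the arithmetic criterion applied to $\Hom_\ct(Y,X)$ and $\Hom_\ct(X,\tau^2 Y)$ (with $\tau^2 Y=(a'-2,b')$), the relevant residue modulo $n+1$ will fall outside the admissible interval in each case: the gap is exactly $n-b+1\geq 1$ for $\Hom_\ct(Y,X)$, while a short disjointness argument handles $\Hom_\ct(X,\tau^2 Y)$. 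Both summands of Lemma~\ref{l:compute-morphism-cluster-tube} therefore vanish, giving $\Hom_\cc(Y,X)=0$. The only delicate point in the whole plan is the bookkeeping of residues modulo $n+1$ and their translation into the intrinsic support/wing condition; the Hom-hammock picture in Figure~1 offers a convenient visual check, and no input deeper than Lemma~\ref{l:compute-morphism-cluster-tube} is required.
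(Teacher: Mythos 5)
Your argument is correct and is essentially the proof the paper intends: the paper states Lemma~\ref{l:dim} as a direct consequence of Lemma~\ref{l:compute-morphism-cluster-tube} and the Hom-hammock, and your subquotient/residue computation is exactly that argument written out (the identification of the common nonvanishing condition with $c-2\in\operatorname{supp}(M)$, i.e.\ $M\notin\cw_{\tau N}$, and the two vanishing checks in part (2) are all accurate). Two cosmetic caveats only: the claim that at most one shift $j$ works (hence each $\Hom_\ct$-space is at most one-dimensional) requires the source to have length at most $n+1$, which holds in every instance you use, and in part (2) the gap for $\Hom_\ct(Y,X)$ is at least $n+1-b$ rather than exactly $n-b+1$ in general --- neither point affects the conclusion.
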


The following lemma collects certain results on morphisms related to indecomposable rigid objects.
\begin{lemma}~\label{l:factor-morphism}
Let $X$ be an indecomposable rigid object in $\cc$. 
\begin{itemize}
\item[(i)] For each indecomposable object $Y\in X^{\sqsubset}$, we have $\Hom_{\ct}(X,Y)\cong K$.
\item[(ii)] For each indecomposable object $Z\in ~^{\sqsupset}X$, we have $\Hom_{\ct}(Z,X)\cong K$.
\item[(iii)] Let $Y, Z$ be indecomposable rigid objects such that $Y, Z\in X^{\sqsubset}$ and $Y\in Z^{\sqsubset}$. Let $f:Z\to Y$ be a non-zero $\ct$-map, then each $\ct$-map from $X$ to $Y$ factors through $f$.
\item[(iv)] Let $Y,Z$ be indecomposable rigid objects such that $X,Z\in \tau^{-2}Y^{\sqsubset}$ and $Z\in X^{\sqsubset}$.
Let $g: Z\to Y$ be a non-zero $\der$-map, then each $\der$-map from $X$ to $Y$ factors through $g$.
\end{itemize}
\end{lemma}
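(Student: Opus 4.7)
My plan is to handle the four parts as a hierarchy. Parts (i) and (ii) are direct dimension computations in the tube; (iii) will follow by combining (i) with a non-vanishing argument for a composition of $\ct$-maps; and (iv) will be obtained from (iii) via the Serre-duality identification in Lemma~\ref{l:compute-morphism-cluster-tube}.

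For (i), I would use that $X$ rigid means $l(X)\le n$, so the Hom-hammock illustrated in Figure~1 shows that $X^{\sqsubset}$ is a wedge-shaped region in the AR-quiver of $\ct$ bounded by the ray and coray through $X$. The standard description of morphisms in a tube as paths in the AR-quiver modulo mesh relations then gives, for each $Y\in X^{\sqsubset}$, a unique (up to scalar) non-zero map $X\to Y$, so $\Hom_{\ct}(X,Y)\cong K$. Part (ii) follows dually, either by a symmetric argument on ${}^{\sqsupset}X$ or by AR-duality in $\der^b(\ct)$.

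For (iii), the hypothesis $Z\in X^{\sqsubset}$ gives a non-zero $\ct$-map $\alpha:X\to Z$, and $f\circ \alpha:X\to Y$ is still a $\ct$-map. The key step is to verify $f\circ\alpha\neq 0$: under the path description, $\alpha$ and $f$ are represented (up to scalar) by the unique paths along the hammock, and their concatenation is the unique non-zero path from $X$ to $Y$ lying inside $X^{\sqsubset}$. Combined with (i), this forces $\Hom_{\ct}(X,Y)=K\cdot(f\circ \alpha)$, so every $\ct$-map $X\to Y$ is a scalar multiple of $f\circ \alpha$ and hence factors through $f$.

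For (iv), I plan to invoke Lemma~\ref{l:compute-morphism-cluster-tube} to identify $\der$-maps $X\to Y$ with elements of $D\Hom_{\ct}(Y,\tau^2 X)$. Translating the hypotheses, $X,Z\in \tau^{-2}Y^{\sqsubset}$ becomes $\tau^2 X,\tau^2 Z\in Y^{\sqsubset}$, and $Z\in X^{\sqsubset}$ becomes $\tau^2 Z\in (\tau^2 X)^{\sqsubset}$; this is precisely the setup of (iii) for the triple $(Y;\tau^2 X,\tau^2 Z)$. A non-zero $\ct$-map $\alpha:X\to Z$ induces, via post-composition with $\tau^2$, a map $(\tau^2\alpha)_*:\Hom_{\ct}(Y,\tau^2 X)\to \Hom_{\ct}(Y,\tau^2 Z)$ whose $K$-linear dual (paired against $g$) realizes $g\circ \alpha$; by (iii) applied to the triple above, any non-zero $\ct$-map $Y\to \tau^2 Z$ factors through $\tau^2\alpha$, so $(\tau^2\alpha)_*$ is an isomorphism, and rescaling $\alpha$ produces every prescribed $\der$-map $X\to Y$ as $g\circ \alpha$. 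The main obstacle, as already indicated, is the non-vanishing of $f\circ \alpha$ in (iii); making it rigorous is what forces one to unpack the AR-structure of the tube, either via the path-algebra description of $\ct$ restricted to the wing containing the relevant indecomposables, or through direct analysis of the mesh relations along the hammock.
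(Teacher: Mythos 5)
Your overall strategy coincides with the paper's: parts (i)--(iii) are read off from the AR-quiver of the tube, and (iv) is deduced from (i) and (iii) through the Serre-duality identification of Lemma~\ref{l:compute-morphism-cluster-tube}. In particular your argument for (iv) -- apply (iii) to the triple with source $Y$ and targets $\tau^2X,\tau^2Z$, then dualize so that postcomposition with $\tau^2\alpha$ becomes precomposition with $\alpha$ on the spaces of $\der$-maps, both of which are one-dimensional -- is the paper's proof verbatim up to applying the autoequivalence $\tau^2$ (the paper works instead with source $\tau^{-2}Y$ and targets $X,Z$).

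The step you yourself isolate as the crux of (iii), the non-vanishing of $f\circ\alpha$, is however a genuine gap, and the way you assert it (``their concatenation is the unique non-zero path from $X$ to $Y$ lying inside $X^{\sqsubset}$'') is false at the stated level of generality. With the literal definition of $X^{\sqsubset}$ as the Hom-support in the tube, take the tube of rank $5$ (so $n=4$) and $X=(1,4)$, $Z=(4,4)$, $Y=(2,3)$: all three are rigid, $Y,Z\in X^{\sqsubset}$ and $Y\in Z^{\sqsubset}$, yet the unique (up to scalar) $\ct$-map $\alpha\colon X\to Z$ has image the socle $(4,1)$ of $Z$, which is contained in the kernel $(4,3)$ of the non-zero map $f\colon Z\to Y$, so $f\circ\alpha=0$; meanwhile $\Hom_{\ct}(X,Y)\neq 0$ (the canonical surjection $(1,4)\twoheadrightarrow(2,3)$), and that map cannot factor through $f$. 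The issue is that in the tube the three Hom-spaces may be non-zero for incompatible ``windings'' of the hammocks. So a complete proof of (iii) must use the positional data under which the lemma is actually invoked in Propositions~\ref{p:exchange-n} and~\ref{p:exchange} (explicit coordinates for which the containments hold already in the universal cover, equivalently for which the length of the image of $\alpha$ exceeds the length of the kernel of $f$), and verify the non-vanishing there; once that is done, one-dimensionality from (i) finishes (iii) exactly as you say, and your (iv) goes through unchanged. To be fair, the paper's own treatment of (i)--(iii) is only ``can be read directly from the AR-quiver'', so it is no more explicit than you are; but since you left the non-vanishing as the remaining obstacle, be aware that it requires this extra hammock information and not merely the hypotheses as literally stated.
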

\begin{proof}
The statements (i), (ii) and (iii) can be read directly from the AR-quiver of $\ct$. For (iv), we first note that $Z\in \tau^{-2}Y^{\sqsubset}$ implies that \[D\Hom_{\der^b(\ct)}(Z, \tau^{-1}\circ \Sigma Y)\cong\Hom_\ct(\tau^{-2}Y, Z)\cong K\]
 by $(i)$. Hence there exist non-zero $\der$-maps from $Z$ to $Y$. By $Z\in X^{\sqsubset}$ and $X\in \tau^{-2}Y^{\sqsubset}$, we deduce that $X\in\tau^{-2}Y^{\sqsubset}\cap ~^{\sqsupset} Z $. Let $h:X\to Z$ be a non-zero $\ct$-map. Applying (iii), we conclude that $h$ induces an isomorphism of vector spaces
 \[\Hom_\ct(\tau^{-2}Y, h):\Hom_\ct(\tau^{-2}Y, X)\xrightarrow{\sim} \Hom_\ct(\tau^{-2}Y, Z).
 \]
 On the other hand, we have the following commutative diagram
 \[\xymatrix{\Hom_\ct(\tau^{-2}Y, X)\ar[d]^{\cong}\ar[rrr]^{\Hom_\ct(\tau^{-2}Y, h)} &&&\Hom_\ct(\tau^{-2}Y, Z)\ar[d]^{\cong}\\
 D\Hom_{\der^b(\ct)}(X,\tau^{-1}\circ\Sigma Y)\ar[rrr]^{D\Hom_{\der^b(\ct)}(h,\tau^{-1}\circ\Sigma Y)}&&&D\Hom_{\der^b(\ct)}(Z,\tau^{-1}\circ\Sigma Y).
 }
 \]
 In particular, $h$ also induces an isomorphism
 \[\Hom_{\der^b(\ct)}(h, \tau^{-1}\circ\Sigma Y): \Hom_{\der^b(\ct)}(Z, \tau^{-1}\circ\Sigma Y)\xrightarrow{\sim} \Hom_{\der^b(\ct)}(X, \tau^{-1}\circ\Sigma Y).
 \]
 Now the result follows from the fact that \[\dim_K\Hom_{\der^b(\ct)}(Z, \tau^{-1}\circ\Sigma Y)=1=\dim_K\Hom_{\der^b(\ct)}(X, \tau^{-1}\circ\Sigma Y).\]
\end{proof}

For a given basic maximal rigid object $T=\overline{T}\oplus T_k$ in $\cc$ with an indecomposable direct summand $T_k$,  the {\it mutation $\mu_k(T)$ of $T$ at $T_k$} is a basic maximal rigid object obtained by replacing $T_k$ by another indecomposable object $T_k^*$.
The objects $T_k^*$ and $ T_k$ are related by the following {\it exchange triangles}
\[T_k^*\xrightarrow{f}U_{T_k, T\setminus T_k}\xrightarrow{g}T_k\to \Sigma T_k^* ~ \text{and}~ T_k\xrightarrow{f'} U'_{T_k, T\setminus T_k}\xrightarrow{g'}\ T_k^*\to\Sigma T_k,
\]
where $f$ and $f'$ are minimal left $\add \overline{T}$-approximations and $g$ and $g'$ are minimal right $\add \overline{T}$-approximations. In this case, $\overline{T}$ is called an {\it almost complete maximal rigid object} and $(T_k, T_k^*)$ is an {\it exchange pair} of $\cc$.

By using Lemma~\ref{l:basic-property-cluster-tube} (4) and the classification of tilting modules for a quiver of type $A_n$ with linear orientation, Zhou and Zhu~\cite{ZhouZhu14} obtained the following description of the exchange triangles in $\cc$. 
\begin{lemma}\label{l:exchange triangle in cluster tube}
Given two basic maximal rigid objects $T_k\oplus \overline{T}$ and $T_k^*\oplus \overline{T}$ in $\cc$ such that both $T_k$ and $T_k^*$ are indecomposable. Then $\dim_K\Ext_{\cc}^{1}(T_k,T_k^*)=1$ or $2$. Moreover, 
\begin{itemize}
\item[(1)]~if $\dim_K\Ext_{\cc}^{1}(T_k,T_k^*)=2$, then  $T_k$ and $T_k^*$ are of length $n$. Denote by $T_k=(a,n)$ and $T_k^*=(a+h,n)$ respectively with $1\leq a\leq n+1, 1\leq h\leq n$, 
 then the exchange triangles are of the following forms:
\[(a,n)\to (a+h,n-h)\oplus (a+h,n-h)\to(a+h,n)\to\Sigma(a,n)\]
\[(a+h,n)\to (a,h-1)\oplus (a,h-1)\to (a,n)\to\Sigma(a+h,n);\]

\item[(2)]
if $\dim_K\Ext^1_{\cc}(T_k,T_k^*)=1$, then $l(T_k)<n$ and $l(T_k^*)<n$.  Denote by $T_k=(a,b)$, then $T_k^*=(a+h,b-h+i)$, where $1\leq a\leq n+1, 1\leq b<n, 1\leq h\leq b, 1\leq i\leq n-b$, and 
the exchange triangles are of  the following forms:
\[(a,b)\to (a,b+i)\oplus(a+h,b-h)\to (a+h,b-h+i)\to \Sigma(a,b)\]
\[(a+h,b-h+i)\to (a+b+1,i-1)\oplus(a,h-1)\to (a,b)\to \Sigma(a+h,b-h+i).
\]
\end{itemize}
\end{lemma}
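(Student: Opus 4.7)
The plan is to compute $\dim_K \Ext^1_\cc(T_k, T_k^*)$ directly and then identify the exchange triangles explicitly using the hammock structure of the tube $\ct$. Since $\cc$ is $2$-Calabi-Yau with $\Sigma = \tau$, Lemma~\ref{l:compute-morphism-cluster-tube} gives
$$\Ext^1_\cc(T_k,T_k^*) \;\cong\; \Hom_\cc(T_k,\tau T_k^*) \;\cong\; \Hom_\ct(T_k,\tau T_k^*) \;\oplus\; D\Hom_\ct(T_k^*,\tau T_k).$$
In the tube $\ct$, every Hom-space between two indecomposables is either zero or one-dimensional, so $\dim_K \Ext^1_\cc(T_k,T_k^*) \leq 2$; and it is at least $1$ since $(T_k,T_k^*)$ is an exchange pair. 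This yields the stated dichotomy.

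Next I would split into two cases according to the length of $T_k$. By Lemma~\ref{l:basic-property-cluster-tube}(3), each basic maximal rigid object has a unique indecomposable summand of length $n$. If $T_k$ has length $<n$, then this distinguished summand $(c,n)$ lies in $\overline{T}$ and is simultaneously the unique length-$n$ summand of $T_k^*\oplus \overline{T}$; hence both $T_k$ and $T_k^*$ sit inside the wing $\cw_{(c,n)}$. Via the bijection of Lemma~\ref{l:basic-property-cluster-tube}(4), the two maximal rigid objects correspond to two tilting modules of $K\vec{A}_n$ that differ by a single tilting mutation. The classical exchange triangles for tilting modules over the linearly oriented quiver $\vec{A}_n$ then transport back to $\cw_{(c,n)}\subseteq \cc$ and yield part~(2); a direct inspection of the hammock shows that exactly one of the two Hom-summands above is non-zero for wing-internal objects, so $\dim_K\Ext^1_\cc(T_k,T_k^*) = 1$ in this case.

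In the remaining case both $T_k$ and $T_k^*$ must have length $n$: if $T_k$ had length $n$ but $T_k^*$ did not, then $T_k^*\oplus\overline{T}$ would contain no length-$n$ summand, contradicting Lemma~\ref{l:basic-property-cluster-tube}(3). Writing $T_k=(a,n)$ and $T_k^*=(a+h,n)$, Lemma~\ref{l:dim}(1) applied with $\tau T_k^*=(a+h-1,n)$ shows that $\dim_K\Hom_\cc(T_k,\tau T_k^*)=2$, so $\dim_K\Ext^1_\cc(T_k,T_k^*)=2$ and both Hom-summands above are non-zero. To pin down the middle terms I would construct the minimal right $\add\overline{T}$-approximation of $T_k$ explicitly: the hammock structure combined with Lemma~\ref{l:factor-morphism} forces the indecomposables $(a+h,n-h)$ and $(a,h-1)$ to be summands of $\overline{T}$, and produces two linearly independent morphisms (one $\ct$-map and one $\der$-map, by Lemma~\ref{l:compute-morphism-cluster-tube}) through each of these summands, which accounts for the multiplicity $2$ appearing in the middle terms of the stated triangles.

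The principal obstacle will be the length-$n$ case: verifying rigorously that $\overline{T}$ must contain exactly the summands $(a+h,n-h)$ and $(a,h-1)$, and that all relevant morphisms factor as claimed so that the stated triangles genuinely compute the minimal left/right $\add\overline{T}$-approximations. This requires a careful analysis of how an almost complete maximal rigid object $\overline{T}$ extends in two different ways to a basic maximal rigid object, combined with a systematic application of the Hom-hammock via Lemma~\ref{l:factor-morphism}.
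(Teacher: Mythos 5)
Your overall route is the same one the paper indicates: the paper does not prove this lemma at all, but quotes it from Zhou--Zhu~\cite{ZhouZhu14}, noting it is obtained from Lemma~\ref{l:basic-property-cluster-tube}(4) together with the classification of tilting modules over the linearly oriented quiver $\vec{A}_n$; your reduction to the wing of the unique length-$n$ summand is exactly that reduction. Your dichotomy argument is essentially correct: for the objects involved (all of length at most $n$) the relevant $\Hom_\ct$-spaces are at most one-dimensional, the unique length-$n$ summand forces $l(T_k)=n$ if and only if $l(T_k^*)=n$, and Lemma~\ref{l:dim}(1) gives $\dim_K\Ext^1_\cc(T_k,T_k^*)=2$ in that case. (Your blanket claim that \emph{all} Hom-spaces between indecomposables of $\ct$ are at most one-dimensional is false for objects of length $>n$; it should be restricted to the rigid ones, which is all you need.)

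There is, however, a genuine gap in your treatment of part (2). Classical tilting theory over the hereditary algebra $K\vec{A}_n$ produces, for an almost complete tilting module with its two complements, only \emph{one} short exact sequence, since exactly one of $\Ext^1(X,X^*)$, $\Ext^1(X^*,X)$ is nonzero; transporting it into the wing yields only the first exchange triangle of part (2), the one made of $\ct$-maps. The second triangle,
\[(a+h,b-h+i)\to (a+b+1,i-1)\oplus(a,h-1)\to (a,b)\to \Sigma(a+h,b-h+i),\]
is not induced by any exact sequence in $\ct$: the lengths do not add up (one would need $h=b+1$, impossible since $h\leq b$), and the component $(a+b+1,i-1)\to (a,b)$ admits no nonzero $\ct$-map, so it is necessarily a $\der$-map (compare the proof of Proposition~\ref{p:exchange}, where $g_1$ is taken to be a $\der$-map). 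Hence this triangle must be computed directly in the orbit category, by the same approximation analysis via the Hom-hammock and Lemma~\ref{l:factor-morphism}, keeping $\ct$-maps and $\der$-maps apart, that you reserve only for the length-$n$ case; as stated, your plan for part (2) does not produce it. The length-$n$ case itself is left as an acknowledged sketch (that $(a+h,n-h)$ and $(a,h-1)$ are forced to be summands of $\overline{T}$, and that the stated maps are the minimal $\add\overline{T}$-approximations, still need proof), which is fair as a plan, but the missing second triangle in part (2) is a step that would fail under the strategy you describe.
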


For each basic maximal rigid object $T=\bigoplus\limits_{i=1}^nT_i$, we define a matrix $B_T=(b_{ij})\in M_{n}(\Z)$ as follows
\[b_{ij}=\alpha_{U_{T_j, T\setminus T_j}}T_i-\alpha_{U'_{T_j, T\setminus T_j}}T_i,
\]
where $\alpha_YX$ denotes the multiplicity of $X$ as a direct summand of $Y$. Recall that we have the Gabriel quiver $Q_T$ of the endomorphism algebra $\End_{\cc}(T)$. 
The following result suggests another construction of the skew-symmetrizable matrix $B_T$, which is an easy consequence of the definitions of $B_T$ and $Q_T$.
\begin{lemma}~\label{l:skew-symmetrizable-matrix-via-quiver}
Let $T=\bigoplus\limits_{i=1}^nT_i$ be a basic maximal rigid object of $\cc$ with $l(T_1)=n$ and $Q_T$ its associated quiver. Let $B_T=(b_{ij})\in M_n(\Z)$ be the skew-symmetrizable matrix associated to $T$. Then for $i\neq j$, we have
\[b_{ij}=\begin{cases}|\{\text{arrows $T_i\to T_j$}\}|-|\{\text{arrows $T_j\to T_i$}\}|& j\neq 1;\\
2|\{\text{arrows $T_i\to T_1$}\}|-2|\{\text{arrows $T_1\to T_i$}\}|& j=1.
\end{cases}
\]
\end{lemma}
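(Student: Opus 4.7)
The plan is a direct case analysis that compares the middle terms of the exchange triangles from Lemma~\ref{l:exchange triangle in cluster tube} with the arrow counts in $Q_T$, reading the factor of $2$ off the dimension of $\Ext^1_\cc(T_j,T_j^*)$.

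By Lemma~\ref{l:basic-property-cluster-tube}(3), $T_1$ is the unique length-$n$ direct summand of any basic maximal rigid object; since the same lemma applies to the mutated object $T_j^*\oplus(T\setminus T_j)$, mutation preserves this uniqueness. Hence, for $j\neq 1$ both $T_j$ and $T_j^*$ have length $<n$, placing us in case~(2) of Lemma~\ref{l:exchange triangle in cluster tube} with $\dim_K\Ext^1_\cc(T_j,T_j^*)=1$; for $j=1$ both $T_1$ and $T_1^*$ have length~$n$, placing us in case~(1) with $\dim_K\Ext^1_\cc(T_1,T_1^*)=2$.

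Reading off the middle terms of the exchange triangles from Lemma~\ref{l:exchange triangle in cluster tube}, one finds that for $j\neq 1$ the objects $U_{T_j,T\setminus T_j}$ and $U'_{T_j,T\setminus T_j}$ decompose into at most two distinct indecomposable summands, each appearing with multiplicity $1$ (summands with non-positive second coordinate being interpreted as zero), while for $j=1$ each middle term consists of a single indecomposable summand appearing with multiplicity~$2$. Thus the factor of~$2$ in the target formula for $j=1$ reflects exactly the jump $\dim\Ext^1_\cc(T_1,T_1^*)=2$.

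To identify these multiplicities with arrow counts in $Q_T$, I would use that $g\colon U_{T_j,T\setminus T_j}\to T_j$ is a minimal right $\add(T\setminus T_j)$-approximation (and dually $f'\colon T_j\to U'_{T_j,T\setminus T_j}$ a minimal left one). The components of $g$ restricted to each $T_i$-summand of $U_{T_j,T\setminus T_j}$ then represent irreducible morphisms $T_i\to T_j$ in $\add T$, so that each such summand contributes an arrow $T_i\to T_j$ in $Q_T$. Combined with the explicit forms of the middle terms, this yields
\[
\alpha_{U_{T_j, T\setminus T_j}} T_i = s_j\cdot \bigl|\{\text{arrows } T_i\to T_j\}\bigr|,\qquad \alpha_{U'_{T_j, T\setminus T_j}} T_i = s_j\cdot \bigl|\{\text{arrows } T_j\to T_i\}\bigr|,
\]
where $s_j=\dim_K\Ext^1_\cc(T_j,T_j^*)$, and then the claimed formula for $b_{ij}$ drops out of its definition $b_{ij}=\alpha_{U_{T_j, T\setminus T_j}} T_i-\alpha_{U'_{T_j, T\setminus T_j}} T_i$.

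The main technical point is the $j=1$ case: one must verify that the indecomposable summand appearing in $U_{T_1,T\setminus T_1}$ with multiplicity~$2$ gives rise to precisely one arrow (not two) in $Q_T$. This is where the length-$n$ summand behaves differently from the others, and I would settle it by invoking Lemma~\ref{l:compute-morphism-cluster-tube} together with Lemma~\ref{l:factor-morphism} to analyze the irreducible maps between length-$<n$ summands and the distinguished length-$n$ summand $T_1$, using the explicit shape $U_{T_1,T\setminus T_1}=(a,h-1)^{\oplus 2}$ from Lemma~\ref{l:exchange triangle in cluster tube}(1) to pin down the relevant summand and its single irreducible map to $T_1$.
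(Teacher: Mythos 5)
The paper offers no argument for this lemma (it is declared ``an easy consequence of the definitions of $B_T$ and $Q_T$''), so there is no official proof to match; your route --- comparing the multiplicities in the middle terms of the exchange triangles with arrow counts via minimality of the approximations, with the factor $2$ coming from the length-$n$ case of Lemma~\ref{l:exchange triangle in cluster tube} --- is the natural one and is essentially correct. Two places need more than you give. First, your identity $\alpha_{U_{T_j,T\setminus T_j}}T_i=s_j\cdot|\{T_i\to T_j\}|$ needs both inequalities: besides ``each summand of the middle term contributes an arrow'' you must also rule out arrows $T_i\to T_j$ for $T_i$ not occurring in the middle term; this follows because such a map factors through the minimal right $\add(T\setminus T_j)$-approximation with both factors radical, hence lies in $\opname{rad}^2$. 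This converse is exactly where minimality is used, and it is also needed (but not mentioned) in your $j=1$ case for the summands other than $(a,h-1)$. Note too that the identification ``multiplicity $=$ number of arrows'' for $j\neq 1$ silently uses that there is no loop at the target vertex $j$ (only vertex $1$ carries a loop); this is precisely why the same identification fails at $j=1$, so it is worth making explicit. Second, the crux you flag for $j=1$ is left as a plan, but the plan does close: by Lemma~\ref{l:compute-morphism-cluster-tube}, $\Hom_\cc((a,h-1),T_1)$ is two-dimensional, spanned by a $\ct$-map and a $\der$-map; applying Lemma~\ref{l:factor-morphism}(iv) with $Y=Z=T_1$ and $g=\rho$ the loop (a nonzero $\der$-map $T_1\to T_1$) shows every $\der$-map $(a,h-1)\to T_1$ factors through $\rho$, hence equals $\rho$ composed with a $\ct$-map and lies in $\opname{rad}^2$; on the other hand the $\ct$-map cannot lie in $\opname{rad}^2$, since multiplicity $2$ in the minimal approximation forces every nonzero map $(a,h-1)\to T_1$ not to factor radically through $\add(T\setminus T_1)$, so $\opname{rad}^2((a,h-1),T_1)$ is spanned by the $\der$-map alone. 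This gives exactly one arrow $(a,h-1)\to T_1$, and the dual argument handles $U'_{T_1}=(a+h,n-h)^{\oplus 2}$ and arrows $T_1\to(a+h,n-h)$. With these details supplied, your proof is complete.
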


 It was proved in ~\cite{BuanMarshVatne} that $B_T$ is a skew-symmetrizable matrix and when an indecomposable summand of a maximal rigid object is exchanged, the change in the matrix is given by Fomin-Zelevinsky's mutation of matrices, {\it {i.e.}} $\mu_k(B_T)=B_{\mu_k(T)}$ for any basic maximal rigid object $T$. Moreover,  the following result was proved~({\it cf.}~ Theorem 3.5 of ~\cite{BuanMarshVatne}).
\begin{theorem}~\label{t:BMV-main-theorem}
Let $T$ be a basic maximal rigid object of $\cc$ and $\mathcal{A}_T:=\mathcal{A}(B_T)$ the associated cluster algebra of type $\mathrm{C}_n$.
There is a bijection $\mathbb{X}_?$ between the indecomposable rigid objects of $\cc$ and the cluster variables of $\mathcal{A}_T$. The bijection induces a bijection between the basic maximal rigid objects of $\cc$ and  the clusters of $\mathcal{A}_T$ such that $\Sigma T$ corresponds to the initial cluster of $\mathcal{A}_T$. Moreover, the bijection is compatible with mutations.
\end{theorem}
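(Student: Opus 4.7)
The plan is to match the exchange combinatorics of basic maximal rigid objects in $\cc$ with the cluster combinatorics of $\mathcal{A}_T$, exploiting that both sides are finite and of the same cardinality. By Lemma~\ref{l:basic-property-cluster-tube}(4) the number of basic maximal rigid objects in $\cc$ equals $(n+1)\cdot\frac{1}{n+1}\binom{2n}{n} = \binom{2n}{n}$, which matches the number of clusters of a cluster algebra of type $\mathrm{C}_n$; and by Lemma~\ref{l:basic-property-cluster-tube}(1) the number of indecomposable rigid objects is $n(n+1)$, matching the number of cluster variables in type $\mathrm{C}_n$.

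The first technical step is matrix-mutation compatibility: for every basic maximal rigid $T$ and every indecomposable summand $T_k$, one has $B_{\mu_k(T)} = \mu_k(B_T)$, with $\mu_k$ on the right being Fomin-Zelevinsky matrix mutation. The input is the explicit form of the exchange triangles in Lemma~\ref{l:exchange triangle in cluster tube} combined with the quiver-theoretic description in Lemma~\ref{l:skew-symmetrizable-matrix-via-quiver}. The distinguished length-$n$ summand (unique by Lemma~\ref{l:basic-property-cluster-tube}(3)) is where the doubled multiplicities appear in $B_T$, and the corresponding exchange triangles in Lemma~\ref{l:exchange triangle in cluster tube}(1), whose middle terms carry multiplicity $2$, yield the skew-symmetrizable mutation rule of type $\mathrm{C}_n$ rather than $\mathrm{B}_n$ or $A_n$. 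Having this, I would build the bijection by propagation along the $n$-regular tree $\mathbb{T}_n$: anchor at the root $t_0$ by declaring $\mathbb{X}_{\Sigma T_i}=x_i$, and along each edge labeled $k$ mutate both sides in parallel, using Lemma~\ref{l:exchange triangle in cluster tube} on the rigid-object side and the Fomin-Zelevinsky exchange relation driven by the matched exchange matrix on the cluster-algebra side. Since type $\mathrm{C}_n$ has finite cluster type, the cluster side factors through the finite exchange graph of $\mathcal{A}_T$, and the matched count of basic maximal rigid objects forces the rigid-object side to factor through the same quotient, establishing the bijection at the cluster level with $\Sigma T$ sent to the initial cluster.

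The hardest step is showing that $\mathbb{X}_?$ is well-defined on \emph{individual} indecomposable rigid objects: if the same $M$ occurs as a summand of two maximal rigid objects reached from $\Sigma T$ by different mutation paths, the cluster variables attached along the two paths must coincide. This reduces to a local rank-$2$ check, because every relation in the exchange graph of a finite-type cluster algebra is generated by rank-$2$ relations. The rank-$2$ sub-pictures that occur in $\cc$ are of types $A_1\times A_1$, $A_2$, and $\mathrm{C}_2$; direct inspection of Lemma~\ref{l:exchange triangle in cluster tube} in each case verifies that the $4$-, $5$-, and $6$-cycles of rigid-object mutations map to the corresponding cluster exchange cycles. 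The $\mathrm{C}_2$ case, where the doubled-multiplicity exchange produces precisely the type-$\mathrm{C}$ Laurent relations, is the main obstacle and requires the most delicate verification.
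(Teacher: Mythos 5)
You should first note that the paper does not prove Theorem~\ref{t:BMV-main-theorem} at all: it is imported verbatim from Theorem~3.5 of \cite{BuanMarshVatne}, so any argument here is necessarily independent of the paper; the question is whether yours is sound, and as written it has two genuine gaps. Your counting inputs are correct ($\binom{2n}{n}$ basic maximal rigid objects via Lemma~\ref{l:basic-property-cluster-tube}(4), and $n(n+1)$ indecomposable rigid objects, matching the numbers of clusters and cluster variables in type $\mathrm{C}_n$), but the sentence ``the matched count of basic maximal rigid objects forces the rigid-object side to factor through the same quotient'' is not a valid inference: two quotients of $\mathbb{T}_n$ having the same finite cardinality does not imply the two equivalence relations coincide. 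Equal counts can only be exploited \emph{after} one containment between the relations (i.e.\ well-definedness in one direction) has been established; it cannot produce that containment. Moreover, to know that $\mathcal{A}(B_T)$ has exactly $\binom{2n}{n}$ clusters for an \emph{arbitrary} maximal rigid $T$ you must already know $B_T$ is mutation-equivalent to a type $\mathrm{C}_n$ matrix, which itself needs $\mu_k(B_T)=B_{\mu_k(T)}$ together with connectedness of the categorical mutation graph (mutating $T$ to $(1,1)\oplus\cdots\oplus(1,n)$); this belongs in the chain of reasoning rather than being presupposed.

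The more serious problem is a circularity in your ``hardest step''. What you need there is: if two mutation paths from $\Sigma T$ reach the same labeled maximal rigid object, then the parallel cluster-side paths reach the same labeled seed. Reducing this to rank-$2$ checks requires knowing that closed cycles in the \emph{categorical} exchange graph of maximal rigid objects of $\cc$ are generated by the local $4$-, $5$- and $6$-cycles; instead you invoke the corresponding property of the exchange graph of a finite-type \emph{cluster algebra} (simple connectedness of the $2$-skeleton of the generalized associahedron), i.e.\ a property of the very object you are trying to identify with the categorical graph. The argument can be repaired by running it in the opposite direction: use the associahedron fact to show that the assignment from seeds to maximal rigid objects is well defined (cluster-side relations are generated by rank-$2$ relations, and the categorical mutations satisfy the matching $4$-, $5$-, $6$-periodicities, which must actually be verified from Lemma~\ref{l:exchange triangle in cluster tube}, in particular the hexagon involving the length-$n$ summand); then connectedness of categorical mutation gives surjectivity, the count $\binom{2n}{n}$ gives bijectivity, and only then does the implication you originally wanted follow, making $\mathbb{X}_?$ well defined on indecomposable rigid objects. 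As written, both the counting step and the rank-$2$ reduction point the wrong way, and the rank-$2$ periodicity computations — which you correctly identify as the heart of the matter — are asserted rather than carried out.
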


\subsection{Algebras arising from cluster tubes}~\label{s:endomorphism-algebra}
Let $T$ be a basic maximal rigid object of the rank $n+1$ cluster tube $\cc$. 
Recall that $\opname{pr}T$ is the  full subcategory of $\cc$ consisting of objects which are finitely presented by $T$.
A general result of ~\cite{ZhouZhu} implies that the rigid objects of $\cc$ belong to $\opname{pr}T$.

Denote by $\Gamma:=\End_{\cc}(T)$ the endomorphism algebra of $T$ and $\mod \Gamma$ the category of finitely generated right $\Gamma$-modules. Recall that the functor \[F:=\Hom_{\cc}(T,-):\cc\to \mod \Gamma\]
induces an equivalence of categories \[F: \opname{pr}T/\add \Sigma T\xrightarrow{\sim} \mod \Gamma,\] where $\opname{pr}T/\add \Sigma T$ is the additive quotient of $\opname{pr} T$ by morphisms factorizing through $\add \Sigma T$. Moreover, the restriction of the functor $F$ to the subcategory $\add T$ yields an equivalence between $\add T$ and the category of finitely generated projective $\Gamma$-modules. The following bijection between the set of basic maximal rigid objects of $\cc$ and the set of basic support $\tau$-tilting modules of $\Gamma$ has been established in~\cite{LiuXie,ChangZhangZhu}. Namely,
\begin{theorem}~\label{t:LiuXie-ChangZhangZhu}
The functor $F$ yields a bijection between the basic maximal rigid objects of $\cc$ and the basic support $\tau$-tilting $\Gamma$-modules. 
\end{theorem}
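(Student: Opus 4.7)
The plan is to build the bijection from the decomposition $R = R_0 \oplus \Sigma T_J$ of each basic maximal rigid object $R$, where $T_J = \bigoplus_{j \in J} T_j$ is a direct summand of $T$ and $R_0 \in \opname{pr} T$ has no summand in $\add \Sigma T$; the assignment sends $R$ to the pair $(F(R_0), F(T_J))$. This is well-defined because rigid objects lie in $\opname{pr} T$ by \cite{ZhouZhu}, and $F$ annihilates $\add \Sigma T$. Lemma~\ref{l:basic-property-cluster-tube}(2) then gives $|R_0| + |J| = n$, the correct total count for a basic support $\tau$-tilting pair.

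The forward direction splits into verifying $\tau$-rigidity of $F(R_0)$ and the support condition $\Hom_\Gamma(F(T_J), F(R_0)) = 0$. Fix a triangle $T_1^{R_0} \to T_0^{R_0} \to R_0 \to \Sigma T_1^{R_0}$ with $T_i^{R_0} \in \add T$; applying $F$ yields a minimal projective presentation of $F(R_0)$. Because $T$ is rigid, $\Hom_\cc(T_i^{R_0}, \Sigma T') \cong \Ext^1_\cc(T_i^{R_0}, T') = 0$ for every $T' \in \add T$, so no non-zero morphism $T_i^{R_0} \to R_0$ factors through $\add \Sigma T$, and hence $\Hom_\cc(T_i^{R_0}, R_0) \cong \Hom_\Gamma(F(T_i^{R_0}), F(R_0))$ on the nose. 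The long exact sequence obtained from the triangle under $\Hom_\cc(-, R_0)$ then shows that the cokernel of the induced map embeds into $\Ext^1_\cc(R_0, R_0) = 0$, giving the surjectivity that characterises $\tau$-rigidity. The support condition follows from the rigidity of $R$ via $\Hom_\Gamma(F(T_J), F(R_0)) = \Hom_\cc(T_J, R_0) = \Ext^1_\cc(\Sigma T_J, R_0) = 0$, using the same no-factoring observation for $T_J \in \add T$.

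For the inverse direction and injectivity, given a basic support $\tau$-tilting pair $(M, P)$ with $|M| + |P| = n$, lift a minimal projective presentation of $M$ to a triangle $T_1 \to T_0 \to R_0 \to \Sigma T_1$ in $\opname{pr} T$ via the inverse equivalence, choosing $R_0$ with no $\Sigma T$-summand so that it is canonically determined; let $T_J \in \add T$ correspond to $P$, and set $R := R_0 \oplus \Sigma T_J$, so that $|R| = n$. The off-diagonal blocks of $\Ext^1_\cc(R, R)$ vanish by rigidity of $T$, the support condition $\Hom_\Gamma(P, M) = 0$, and the 2-Calabi--Yau duality $\Ext^1_\cc(X, Y) \cong D\Ext^1_\cc(Y, X)$; the delicate piece is the diagonal block $\Ext^1_\cc(R_0, R_0)$. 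I would extract this either directly by dualising: applying $\Hom_\cc(R_0, -)$ to the defining triangle, invoking 2-CY to identify $\Ext^1_\cc(T_0, R_0) \cong D\Ext^1_\cc(R_0, T_0)$, and combining with the Hom-hammock of Lemma~\ref{l:compute-morphism-cluster-tube} to promote $\tau$-rigidity of $M$ to the vanishing of $\Ext^1_\cc(R_0, R_0)$; or alternatively by induction on mutations using the exchange triangles of Lemma~\ref{l:exchange triangle in cluster tube}, once one checks that the proposed bijection intertwines mutation on both sides and that the initial objects correspond ($T \leftrightarrow \Gamma$). Maximality of $R$ then follows from the summand count via Lemma~\ref{l:basic-property-cluster-tube}(2), and injectivity of the assignment is immediate because $R_0$ is recovered from $M$ and $T_J$ from $P$ via the equivalence.

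The main obstacle is precisely this diagonal-block vanishing in the inverse direction: the additive quotient $\opname{pr} T / \add \Sigma T$ discards morphisms factoring through $\add \Sigma T$, so recovering $\Ext^1_\cc(R_0, R_0) = 0$ from module-theoretic surjectivity demands genuine 2-Calabi--Yau input that is not automatic in a cluster category lacking a cluster-tilting object. Passing through the Adachi--Iyama--Reiten bijection with 2-term silting objects in $\per \Gamma$ and lifting these back to $\cc$ via the pretriangulated structure is the conceptually cleaner route, and is presumably the one adopted in \cite{LiuXie,ChangZhangZhu}.
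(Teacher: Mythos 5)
First, note that the paper does not actually prove Theorem~\ref{t:LiuXie-ChangZhangZhu}: it is imported verbatim from \cite{LiuXie,ChangZhangZhu}, so there is no internal argument to compare yours against; what matters is whether your reconstruction is complete. The easy half of your proposal is fine. The decomposition $R=R_0\oplus\Sigma T_J$, the identification $\Hom_\cc(T',R_0)\cong\Hom_\Gamma(F(T'),F(R_0))$ for $T'\in\add T$ (no morphism $T'\to R_0$ factors through $\add\Sigma T$ because $\Ext^1_\cc(T',T'')=0$), the long exact sequence giving surjectivity of $\Hom_\cc(T_0^{R_0},R_0)\to\Hom_\cc(T_1^{R_0},R_0)$ from $\Ext^1_\cc(R_0,R_0)=0$, the support condition, the summand count via Lemma~\ref{l:basic-property-cluster-tube}(2), and injectivity are all correct.

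The genuine gap is exactly the step you flag and then leave open: surjectivity, i.e.\ that the lift $R=R_0\oplus\Sigma T_J$ of an arbitrary basic support $\tau$-tilting pair $(M,P)$ satisfies $\Ext^1_\cc(R_0,R_0)=0$. Your first suggested route (dualise the defining triangle and use 2-CY plus the Hom-hammock) does not close as stated: applying $\Hom_\cc(R_0,-)$ to $\Sigma T_1^{R_0}\to\Sigma T_0^{R_0}\to\Sigma R_0\to\Sigma^2T_1^{R_0}$ produces terms such as $\Hom_\cc(R_0,\Sigma^2T_1^{R_0})\cong D\Hom_\cc(T_1^{R_0},R_0)$ and, further along, terms controlled by $\Hom_\cc(T,\Sigma^2T)\cong D\Hom_\cc(T,T)$, none of which vanish because $T$ is only maximal rigid and $\cc$ has no cluster-tilting object; so $\tau$-rigidity of $M$ (surjectivity of $\Hom_\Gamma(F(T_0^{R_0}),M)\to\Hom_\Gamma(F(T_1^{R_0}),M)$) does not by itself force $\Ext^1_\cc(R_0,R_0)=0$, and a real argument (this is the actual content of \cite{LiuXie,ChangZhangZhu}) is needed. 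Your second route, induction on mutation, is closer to what works, but as sketched it only shows that the image of the maximal rigid objects is closed under mutation; to get surjectivity you must also prove (i) that $F$ intertwines mutation of maximal rigid objects (via the exchange triangles) with Adachi--Iyama--Reiten mutation of support $\tau$-tilting modules, which is itself a nontrivial verification, and (ii) that every support $\tau$-tilting $\Gamma$-module is reachable from $\Gamma$ by mutations --- here one needs the finiteness/connectedness input from \cite{Adachi-Iyama-Reiten} that a finite connected component of the support $\tau$-tilting exchange quiver is the whole quiver, combined with the finiteness and mutation-connectedness of maximal rigid objects in $\cc$. Without (i) and (ii) the induction does not establish the bijection, so as written the proposal proves only one direction of the theorem.
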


Under the identification of $\mod \Gamma$ with $\opname{pr} T/\add \Sigma T$, we deduce that there are exactly $n^2$ indecomposable $\tau$-rigid $\Gamma$-modules by Lemma ~\ref{l:basic-property-cluster-tube} (1). Namely, the indecomposable $\tau$-rigid $\Gamma$-modules are precisely $F(X)=\Hom_{\cc}(T,X)$ for indecomposable object $X\in \cc$ with $~ l(X)\leq n$ and $X\not\in \add \Sigma T$. 

\begin{lemma}~\label{l:morphism of quasi length n}
Let $T$ be a basic maximal rigid object in $\cc$ and $\Gamma=\End_{\cc}(T)$.  For any indecomposable object $X\not\in \add\Sigma T$ with $ ~l(X)=n$, we have \[\dim_K\Hom_\Gamma(F(X), F(X))=2.\]
\end{lemma}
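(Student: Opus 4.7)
My plan is to compute the endomorphism ring of $X$ in the cluster tube $\cc$ and then transport it to $\End_\Gamma(F(X))$ via the equivalence $F$. First, I apply Lemma~\ref{l:dim}(1) with $N=M=X$: since the unique length-$n$ indecomposable in the wing $\cw_{\tau X}$ is $\tau X$ itself (which differs from $X$), one has $X\notin\cw_{\tau X}$ and hence $\dim_K\End_\cc(X)=2$. By Lemma~\ref{l:compute-morphism-cluster-tube}, this algebra decomposes as $\End_\ct(X)\oplus D\Hom_\ct(X,\tau^2 X)=K\cdot\id_X\oplus K\cdot\epsilon$, where $\epsilon$ is a $\der$-map. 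Since any composition of two $\der$-maps vanishes in $\cc$, we have $\epsilon^2=0$, so $\End_\cc(X)\cong K[\epsilon]/(\epsilon^2)$.

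The equivalence $F:\opname{pr}T/\add\Sigma T\xrightarrow{\sim}\mod\Gamma$ then gives the reduction
\[
\End_\Gamma(F(X))\cong\End_\cc(X)/\mathfrak{I},
\]
where $\mathfrak{I}$ denotes the ideal of endomorphisms of $X$ factoring through $\add\Sigma T$. If $X\in\add T$ then $X=T_1$ by Lemma~\ref{l:basic-property-cluster-tube}(3), $F(X)$ is the indecomposable projective $P_1$, and Yoneda gives $\End_\Gamma(P_1)=\End_\cc(T_1)=2$, settling that case. Otherwise $X\notin\add T$ and $X\notin\add\Sigma T$; the identity $\id_X$ cannot factor through $\add\Sigma T$ (otherwise $X$ would be a direct summand of an object in $\add\Sigma T$, contradicting indecomposability), so $\mathfrak{I}\subseteq\Rad\End_\cc(X)=K\cdot\epsilon$, and the lemma reduces to proving $\epsilon\notin\mathfrak{I}$.

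For each indecomposable summand $Y$ of $\Sigma T$, I plan to show that every composition $X\xrightarrow{f}Y\xrightarrow{g}X$ is zero in $\End_\cc(X)$. When $Y\in\cw_{\tau^{-1}X}$, Lemma~\ref{l:dim}(2) gives $\Hom_\cc(Y,X)=0$; dually, when $Y\in\cw_{\tau X}$, the 2-Calabi--Yau identity $\Hom_\cc(X,Y)\cong D\Hom_\cc(Y,\tau^2 X)$ combined with Lemma~\ref{l:dim}(2) applied to $\tau^2 X$ forces $\Hom_\cc(X,Y)=0$. The remaining case, when both Hom-spaces are non-zero and which notably includes $Y=\tau T_1$, is the main obstacle. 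In this case I decompose $f=f_\ct+f_\der$ and $g=g_\ct+g_\der$ via Lemma~\ref{l:compute-morphism-cluster-tube}: the $\der$-$\der$ contribution vanishes automatically, and the $\ct$-$\ct$ contribution lies in $\End_\ct(X)=K\cdot\id_X$ and must vanish (otherwise $X$ would be a retract of $Y\in\add\Sigma T$, contradicting $X\notin\add\Sigma T$). For the mixed terms $g_\ct f_\der$ and $g_\der f_\ct$, which lie in $K\cdot\epsilon$, I plan to invoke Lemma~\ref{l:factor-morphism}(iii),(iv) and the Auslander--Reiten formula: the $\der$-maps are duals of specific $\ct$-maps in the tube, and pulling back through the relevant $\ct$-map reduces the question to a Hom-composition in $\ct$ that factors through a submodule whose socle or top is incompatible with that of the target, yielding zero. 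Disentangling the $\ct$- and $\der$-pieces cleanly via the Hom-hammock geometry of the tube is the principal technical hurdle.
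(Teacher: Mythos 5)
Your setup is sound and agrees with the paper up to the decisive point: the dimension count $\dim_K\Hom_\cc(X,X)=2$, the observation that $1_X$ does not factor through $\add\Sigma T$, and the reduction to showing that the $\der$-endomorphism $\epsilon$ does not factor through $\add\Sigma T$ are all correct (the wing cases you dispose of via Lemma~\ref{l:dim}(2) and $2$-Calabi--Yau duality are also fine, and are an extra reduction the paper does not even need). But the statement you single out as ``the main obstacle'' --- the vanishing of the mixed composites $g_\ct f_\der$ and $g_\der f_\ct$ through summands $Y$ of $\Sigma T$ with both $\Hom_\cc(X,Y)$ and $\Hom_\cc(Y,X)$ nonzero, notably $Y=\tau T_1$ --- is exactly the content of the lemma, and your proposal only offers a plan for it, not an argument. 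As stated, the sketched mechanism (``a Hom-composition in $\ct$ that factors through a submodule whose socle or top is incompatible with that of the target'') cannot be the whole story, because it nowhere uses that $Y$ lies in $\add\Sigma T$, i.e.\ that $X\notin\add\Sigma T$: for a general rigid $Y$ in the same Hom-hammock region (e.g.\ $Y=X$ itself, where $\epsilon\circ 1_X=\epsilon$) such a mixed composite is nonzero, so any correct argument must exploit that $\tau^{-1}X\notin\add T$.

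For comparison, the paper closes precisely this gap as follows: writing a hypothetical factorization of the $\der$-map $f\in\Hom_{\der^b(\ct)}(X,\tau^{-1}\Sigma X)$ through $\Sigma T$ as $f=f_1g_1+f_2g_2$ (the two mixed $\ct$/$\der$ terms), it transports the composite $f_1\circ(-)$ through the Auslander--Reiten/Serre duality square, reducing its vanishing to the vanishing of precomposition $\Hom_\ct(\Sigma^{-1}f_1,\tau X):\Hom_\ct(\tau^{-1}X,\tau X)\to\Hom_\ct(T,\tau X)$. This vanishes because every nonzero map $\tau^{-1}X\to\tau X$ in $\ct$ has simple image (the top of $\tau^{-1}X$), so a nonzero composite would force a map from a summand $T_i$ (of length at most $n$) onto the length-$n$ uniserial object $\tau^{-1}X$, hence $T_i\cong\tau^{-1}X$ and $X\in\add\Sigma T$, contradicting the hypothesis; the second mixed term is handled dually. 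Until you carry out this (or an equivalent) computation --- in particular identifying the simple-image fact and the use of $X\notin\add\Sigma T$ --- your proposal has a genuine gap at the only step where the lemma has real content.
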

\begin{proof}
By definition, we have \[\Hom_{\cc}(X,X)=\Hom_{\der^b(\ct)}(X,X)\oplus \Hom_{\der^b(\ct)}(X,\tau^{-1}\Sigma X).\] In particular, $\dim_K\Hom_{\cc}(X,X)=2$. Recall that we have an equivalence $F:\opname{pr} T/\Sigma T\to \mod \Gamma$. Since $X$ is indecomposable and $X\not\in \add \Sigma T$, the identity morphism $1_X\in \Hom_{\ct}(X,X)$ does not factor through $\Sigma T$. Hence it suffices to show that each nonzero morphism $f\in \Hom_{\der^b(\ct)}(X,\tau^{-1}\Sigma X)$ does not factor through $\Sigma T$.  Otherwise, there exist morphisms $g_1\in \Hom_{\der^b(\ct)}(X,\Sigma T)$, $ f_1\in \Hom_{\der^b(\ct)}(\Sigma T, \tau^{-1}\Sigma X)$, $ g_2\in \Hom_{\der^b(\ct)}(X,\tau T)$ and $ f_2\in \Hom_{\der^b(\ct)}(\tau T, \tau^{-1}\Sigma X)$ such that $f=f_1\circ g_1+f_2\circ g_2$. We claim that $f_i\circ g_i=0$ for $i=1,2$. Indeed, for $i=1$, we consider the following commutative diagram
\[\xymatrix{\Hom_{\der^b(\ct)}(X,\Sigma T)\ar[d]^{\wr}\ar[rrr]^{\Hom_{\der^b(\ct)}(X,f_1)}&&&\Hom_{\der^b(\ct)}(X,\tau^{-1}\Sigma X)\ar[d]^{\wr}\\ D\Hom_{\der^b(\ct)}(T,\tau X)\ar[rrr]^{D\Hom_{\der^b(\ct)}(\Sigma^{-1} f_1,\tau X)}&&&D\Hom_{\der^b(\ct)}(\tau^{-1}X, \tau X).}
\]
To show that $f_1\circ g_1=0$, it suffices to show $\Hom_{\der^b(\ct)}(\Sigma^{-1} f_1, \tau X)=0$ and the later one follows from the fact that the image of any nonzero morphism $h\in\Hom_{\ct}(\tau^{-1}X, \tau X)$ is a simple object of $\ct$. Similarly, one can show that $f_2\circ g_2=0$, which contradicts the assumption that $f$ is nonzero.
\end{proof}

Let  $Q=(Q_0,Q_1)$ be a finite quiver with vertex set $Q_0$ and arrow set $Q_1$. 
 For an arrow $\alpha:i\to j\in Q_1$, $s(\alpha)=i$ is the {\it source} of $\alpha$ and $t(\alpha)=j$ is the {\it target} of $\alpha$. An oriented cycle of $Q$ of length $m$ is a path $c=\alpha_m\alpha_{m-1}\cdots \alpha_1$
such that $t(\alpha_i)=s(\alpha_{i+1})$ for $1\leq i<m$ and $s(\alpha_1)=t(\alpha_m)$, where $\alpha_1,\cdots, \alpha_m\in Q_1$.

Let $\mathcal{Q}_{n}$ be the set of quivers with $n$ vertices satisfying the following conditions:
\begin{itemize}
\item[(a)] All non-trivial minimal cycles of length at least $2$ in the underlying graph are oriented and of length $3$;
\item[(b)] Any vertex has at most four neighbors;
\item[(b)] If a vertex has four neighbors, then two of its adjacent arrows belong to one $3$-cycle, and the other two belong to another $3$-cycle;
    \item[(d)]If a vertex has three neighbors, then two of its adjacent arrows belong to one $3$-cycle, and the third one does not belong to any $3$-cycle;
        \item[(e)] There is a unique loop $\rho$ at a vertex $t$ which has one neighbor, or has two neighbors and its traversed by a $3$-cycle.
\end{itemize}
 The following result shows  that the endomorphism algebra $\End_{\cc}(T)$ of a basic maximal rigid object $T$ in $\cc$ is determined by its underlying quiver $Q_T$, which is due to ~\cite{Vatne11}~({\it cf. }also ~\cite{Yang12}).
\begin{theorem}~\label{t:algbra-maximal-tube}
 An algebra is the endomorphism algebra of a basic maximal rigid object in the cluster tube $\cc$ if and only if it is isomorphic to $KQ/I$ for some $Q\in \mathcal{Q}_{n}$, where $I$ is the ideal generated by the square of the unique loop $\rho$ and all paths of length $2$ in a $3$-cycle.
\end{theorem}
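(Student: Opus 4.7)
The plan is to combine the parametrization of basic maximal rigid objects in Lemma~\ref{l:basic-property-cluster-tube}(4) with direct morphism computations via Lemma~\ref{l:compute-morphism-cluster-tube} and the Hom-hammock. I would first handle the forward direction by fixing a basic maximal rigid object $T=T_1\oplus \bar{T}$ with $T_1$ the unique summand of length $n$ (Lemma~\ref{l:basic-property-cluster-tube}(3)), and analysing $\End_{\cc}(T)$ in three steps: the loop at $T_1$, the sub-quiver on $\bar{T}$, and the arrows linking $T_1$ to $\bar{T}$.

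For the loop: by Lemma~\ref{l:morphism of quasi length n}, $\dim_K\End_{\cc}(T_1)=2$, and its non-identity generator is a $\der$-map $\rho\in \Hom_{\der^b(\ct)}(T_1,\tau^{-1}\Sigma T_1)$; this yields a loop at vertex $1$ in $Q_T$, and the relation $\rho^2=0$ follows because the composition of two $\der$-maps is zero. For the sub-quiver on $\bar{T}$: Lemma~\ref{l:basic-property-cluster-tube}(4) tells us that $\bar T$ corresponds to a tilting module over $K\vec{A}_n$, so its Gabriel quiver is that of a cluster-tilted algebra of type $\mathrm{A}$ whose bound quiver structure (all non-trivial minimal cycles are oriented $3$-cycles, zero relations on paths of length $2$ inside each such cycle) is classical; this takes care of conditions (a) and of the relations on $3$-cycles not involving vertex $1$. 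For the arrows at vertex $1$: using the positions of the direct summands of $\bar T$ inside the wing $\cw_{T_1}$ and outside it, together with Lemma~\ref{l:factor-morphism} and Lemma~\ref{l:dim} to determine which $\ct$-maps and $\der$-maps between summands are irreducible in $\add T$, one computes that $T_1$ has at most two neighbors in $\bar T$, and that when it has exactly two they are linked by a $3$-cycle whose composition $T_i\to T_1\to T_j$ vanishes because it factors through a $\der$-map followed by a $\ct$-map that forces length overflow in the wing. Combining these observations verifies conditions (b)--(e) and identifies the ideal of relations with $I$.

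For the converse direction, given $Q\in \cq_n$, I would locate the distinguished loop vertex, remove it to obtain a quiver $Q'$ that is the Gabriel quiver of some tilting module $M'$ over $K\vec{A}_n$ (this is the classical bijection between such quivers and cluster-tilted algebras of type $\mathrm{A}$), and then invoke Lemma~\ref{l:basic-property-cluster-tube}(4) to lift $M'$, together with an appropriate choice of indecomposable of length $n$, to a basic maximal rigid object $T\in \cc$. A final check using the morphism computations of the forward direction shows that $\End_{\cc}(T)\cong KQ/I$.

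The main obstacle is the combinatorial case analysis at vertex $1$: one must enumerate how the summands of $\bar T$, viewed as positions in the tube, can sit relative to the wing $\cw_{T_1}$ and $\cw_{\tau T_1}$, and for each configuration verify irreducibility of the relevant $\ct$-maps and $\der$-maps and the vanishing of the length-two compositions inside the $3$-cycles at vertex $1$. This is essentially the content of Vatne's explicit classification; the tools assembled in Section~\ref{ss:cluster-tubes} (Lemmas~\ref{l:compute-morphism-cluster-tube}, \ref{l:dim}, \ref{l:factor-morphism}, and~\ref{l:exchange triangle in cluster tube}) are designed precisely to make this case analysis tractable.
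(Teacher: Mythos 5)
The paper does not actually prove Theorem~\ref{t:algbra-maximal-tube}: it is imported from Vatne \cite{Vatne11} (see also \cite{Yang12}), so there is no in-paper argument to compare against, and your outline is, as you yourself say, a plan for reconstructing Vatne's classification rather than a different route. Judged as a proof, it has genuine gaps beyond the deferred case analysis. The loop part is fine ($\dim_K\End_\cc(T_1)=2$ and $\rho^2=0$ because two $\der$-maps compose to zero). But the step ``the sub-quiver on $\bar T$ is that of a cluster-tilted algebra of type $\mathrm{A}$, so its bound quiver structure is classical'' is not available as a citation: Lemma~\ref{l:basic-property-cluster-tube}(4) matches the whole of $T$ (the length-$n$ summand playing the role of the projective--injective indecomposable), not $\bar T$, with a tilting $K\vec{A}_n$-module $M'$, and the Gabriel quiver of the tilted algebra $\End_{K\vec{A}_n}(M')$ is not the relevant quiver, since $\End_\cc(T)$ also contains the $\der$-maps; showing that the resulting quiver satisfies (a)--(d) of Remark~\ref{r:cluster-tilted-A} is exactly the computation you postpone. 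More seriously, nothing in your plan shows that the listed relations generate the \emph{whole} ideal of relations: verifying $\rho^2=0$ and the vanishing of length-two paths on $3$-cycles only yields a surjection $KQ_T/I\twoheadrightarrow\End_\cc(T)$, and to upgrade this to an isomorphism one must bound dimensions, e.g.\ compare $\dim_K e_j(KQ_T/I)e_i$ with $\dim_K\Hom_\cc(T_i,T_j)$ computed from Lemma~\ref{l:compute-morphism-cluster-tube} and the Hom-hammock. That dimension count is the heart of the theorem and is absent from the proposal.

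The converse direction also contains a confusion. Deleting the loop from $Q\in\mathcal{Q}_n$ yields a quiver on $n$ vertices satisfying (a)--(d), i.e.\ the quiver of a cluster-tilted algebra of type $\mathrm{A}_n$ (Remark~\ref{r:cluster-tilted-A}); this is \emph{not} the Gabriel quiver of a tilting $K\vec{A}_n$-module, since quivers of tilted algebras of type $\mathrm{A}$ contain no oriented $3$-cycles. So ``the classical bijection'' you invoke does not hand you $M'$ directly: you still have to produce a tilting module $M'$ and a position for the length-$n$ summand such that the additional arrows coming from $\der$-maps complete precisely the prescribed $3$-cycles and place the loop at a vertex as in condition (e), and then rerun the forward computation to identify the relations. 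In short, the proposal is a reasonable roadmap in the spirit of \cite{Vatne11}, but the irreducibility and relation analysis at the loop vertex, the proof that $I$ exhausts the relations, and the realization of an arbitrary $Q\in\mathcal{Q}_n$ are asserted rather than established.
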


\begin{remark}~\label{r:cluster-tilted-A}
An algebra is a cluster-tilted algebra of type $A_n$ if and only if it is isomorphic to $KQ/J$ for some quiver with $n$ vertices satisfying $(a)-(d)$ and $J$ is the ideal generated by all paths of length $2$ in a $3$-cycle ({\it cf.}~\cite{BuanVatne}).
\end{remark}
In particular, Theorem~\ref{t:algbra-maximal-tube} implies that the endomorphism algebra of a basic maximal rigid object in $\cc$ is a gentle algebra. We refer to~\cite{AS87} for the definition of gentle algebras.  

Let $Q$ be a finite quiver and $I$ an admissible ideal of $Q$ such that $KQ/I$ is a gentle algebra.
An oriented cycle $c=\alpha_m\cdots\alpha_2\alpha_1$ of $Q$ is of {\it full relations} if $\alpha_{i+1}\alpha_i\in I$ for $i=1,\cdots, m-1$ and $\alpha_1\alpha_m\in I$. The following has been proved by Holm~\cite{Holm05}.
\begin{lemma}~\label{l:Cartan-matrix-gentle}
The Cartan matrix of a gentle algebra $KQ/I$ is degenerate if and only if $KQ/I$ admits at least one oriented cycle of even length with full relations.
\end{lemma}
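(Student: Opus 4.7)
My plan follows Holm's strategy of computing the Cartan matrix determinant of a gentle algebra $A=KQ/I$ explicitly via a product formula indexed by the oriented cycles of $Q$, and then reading off when this determinant vanishes.

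First, I would use the standard fact that, since $A$ is gentle, the non-zero paths in $Q$ (\emph{i.e.} those that do not belong to $I$) form a $K$-basis of $A$. Consequently the Cartan matrix entry $c_{ij}$ equals $\dim_K e_jAe_i$, which is the number of non-zero paths from $i$ to $j$. The gentle defining conditions --- at most two arrows in and out of each vertex, monomial relations of length $2$, and the compatibility between arrows and relations --- make these paths amenable to an explicit enumeration.

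Second, I would organize this enumeration via the combinatorial device of \emph{permitted threads}: maximal paths in $Q$ along which no two consecutive arrows form a relation in $I$. Every non-zero path is a contiguous subpath of a unique permitted thread, and conversely every subpath of a permitted thread is a non-zero path. This lets me rewrite the Cartan matrix as a sum of contributions associated to the permitted threads of $(Q,I)$, and brings the computation of $\det C(A)$ into a purely combinatorial form.

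Third, I would derive a factorization of $\det C(A)$ as a product over the oriented cycles of $Q$. Acyclic portions of $Q$ and oriented cycles that are \emph{not} of full relations contribute unit factors. Each oriented cycle $c$ of length $l(c)$ all of whose consecutive pairs of arrows lie in $I$ (a cycle with full relations) contributes a factor of the form $1-(-1)^{l(c)}$, up to sign. This factor vanishes precisely when $l(c)$ is even, and in all other cases equals $\pm 2$. Putting the factors together yields $\det C(A)=0$ if and only if at least one oriented cycle of even length with full relations exists, proving the lemma.

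The main obstacle lies in the third step: one must choose an ordering of the vertices of $Q$ and a correction matrix (built from the ``permitted'' arrows) so that the product of $C(A)$ with this correction becomes block-triangular, with blocks naturally indexed by the oriented cycles of $Q$. This bookkeeping --- matching permitted threads with cyclic data in a way compatible with the relations --- is the technical core of Holm's argument and is what ensures that the local contributions assemble correctly into the product formula above.
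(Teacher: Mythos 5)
The paper does not actually prove this lemma: it is quoted verbatim from Holm's article on Cartan determinants of gentle algebras, whose main theorem is exactly the product formula you are aiming at, namely $\det C(KQ/I)=\prod_{c}\bigl(1-(-1)^{l(c)}\bigr)$, the product running over the oriented cycles $c$ with full relations. So your plan targets the right statement, and your first two steps are essentially sound: a gentle algebra is a quadratic monomial algebra, so the paths avoiding $I$ form a $K$-basis and the Cartan entries count such paths, and every \emph{non-trivial} non-zero path extends uniquely to a maximal relation-avoiding path by the gentleness conditions (for trivial paths the uniqueness claim fails, since a vertex may lie on two permitted threads, so the bookkeeping needs the usual convention on trivial threads).

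As a proof, however, there is a genuine gap: your third step \emph{is} the lemma, and it is only asserted. You neither construct the vertex ordering and the ``correction matrix'' nor explain why the determinant should localize at cycles at all. Non-zero paths may run along acyclic parts of $Q$ and pass through several cycles, entering and leaving at different vertices, so $C(KQ/I)$ is not block-triangular with respect to any decomposition into cycles in any evident way; the statement that acyclic pieces and cycles that are not of full relations ``contribute unit factors'' is precisely what has to be proved. The only portion that is immediate as stated is the local computation for a single cycle of length $m$ with full relations, where the Cartan matrix is $E_m+P$ with $P$ a cyclic permutation matrix, so its determinant is $\prod_{\zeta^m=1}(1+\zeta)=1-(-1)^m$, vanishing exactly for $m$ even. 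To complete the argument you would have to carry out the global reduction honestly (for instance by showing that removing suitable arrows not lying on full-relation cycles leaves the Cartan determinant unchanged, thereby reducing to a disjoint union of such cycles), or else do what the paper does and simply invoke Holm's theorem.
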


In particular, the Cartan matrix of the endomorphism algebra of a basic maximal rigid object in the cluster tube $\cc$ is non-degenerate by Theorem~\ref{t:algbra-maximal-tube} and Lemma~\ref{l:Cartan-matrix-gentle}.
Note that the global dimension of the endomorphism algebra of a maximal rigid object in $\cc$ is always infinite.

\section{Rank vectors and mutations}~\label{s:denominator vectors}

\subsection{Exchange compatibility}~\label{ss:exchange-compatible}
The aim of this subsection is to show that each indecomposable rigid object in the cluster tube $\cc$ is exchange compatible.  We begin with some definitions introduced in~\cite{BMR}.

Let $\overline{T}$ be an almost complete basic maximal rigid object in $\cc$ and $(X, X^*)$  the associated exchange pair. In particular,  $\overline{T}\oplus X$ and $\overline{T}\oplus X^{*}$ are basic maximal rigid objects in $\cc$ with the following exchange triangles
\[X^{*}\xrightarrow{f} B \xrightarrow{g} X \xrightarrow{} \Sigma X^{*}~\text{and}~
X \xrightarrow{f'} B' \xrightarrow{g'} X^{*}\xrightarrow{}\Sigma X,\] 
where $B, B'\in \add \overline{T}$.
An indecomposable rigid object $M\in \cc$ is {\it compatible} with the exchange pair $(X,X^*)$, if either $X\cong \Sigma M$, $X^*\cong \Sigma M,$ or, if neither of these holds,
\[\dim_{K}\Hom_{\cc}(M,X) + \dim_{K} \Hom_{\cc}(M,X^*) =
\max\{\dim_{K} \Hom_{\cc}(M,B), \dim_{K}\Hom_{\cc}(M,B')\}.\]
If $M$ is compatible with every exchange pair $(X, X^*)$ in $\cc$, then $M$ is called {\it exchange compatible}.

By the $2$-Calabi-Yau property of $\cc$, it is not hard to see that 
an indecomposable rigid object $M$ of $\cc$ is compatible with the exchange pair $(X,X^*)$ if and only if either $X\cong \Sigma M$, $X^*\cong \Sigma M,$ or, if neither of these holds,
\begin{eqnarray*}
&&\dim_{K}\Hom_{\cc}(X,\Sigma^2M) + \dim_{K} \Hom_{\cc}(X^*,\Sigma^2M) \\
&=&
\max\{\dim_{K} \Hom_{\cc}(B,\Sigma^2M), \dim_{K}\Hom_{\cc}(B',\Sigma^2M)\}. 
\end{eqnarray*}
Since $\Sigma$ is an autoequivalence of $\cc$, it is clear that   $(\Sigma^2X,\Sigma^2X^*)$ is  an exchange pair if and only if $(X,X^*)$ is an exchange pair. Consequently, we obtain the following~({\it cf.}~\cite{AD}).
\begin{lemma}\label{l:compatible}
Let $M$ be an indecomposable rigid object of $\cc$, then $M$ is compatible with an exchange pair $(\Sigma^2X,\Sigma^2X^*)$ if and only if  the following holds whenever $\Sigma X\not\cong  M\not\cong\Sigma X^*:$
\[\dim_{K}\Hom_{\cc}(X,M) + \dim_{K} \Hom_{\cc}(X^*,M) =
\max\{\dim_{K} \Hom_{\cc}(B,M), \dim_{K}\Hom_{\cc}(B',M)\}.
\]
\end{lemma}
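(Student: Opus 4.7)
The plan is to derive the stated reformulation by combining two ingredients already in play: the $2$-Calabi--Yau duality, which exchanges $\Hom_\cc(M,-)$ with $D\Hom_\cc(-,\Sigma^2 M)$, and the fact that $\Sigma^2$ is an autoequivalence, so that conjugating the exchange pair $(X,X^*)$ by $\Sigma^2$ produces another exchange pair with the same numerical data, up to a global shift of the target.

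First, I would take the definition of compatibility of $M$ with $(X,X^*)$ at face value: whenever $\Sigma M \not\cong X$ and $\Sigma M \not\cong X^*$, one requires
\[\dim_K\Hom_\cc(M,X)+\dim_K\Hom_\cc(M,X^*)=\max\{\dim_K\Hom_\cc(M,B),\dim_K\Hom_\cc(M,B')\}.\]
Applying the $2$-Calabi--Yau isomorphism $\Hom_\cc(M,Y)\cong D\Hom_\cc(Y,\Sigma^2 M)$ to each of the four $\Hom$-spaces, this is equivalent to
\[\dim_K\Hom_\cc(X,\Sigma^2 M)+\dim_K\Hom_\cc(X^*,\Sigma^2 M)=\max\{\dim_K\Hom_\cc(B,\Sigma^2 M),\dim_K\Hom_\cc(B',\Sigma^2 M)\},\]
which is precisely the reformulation recorded in the paragraph preceding the lemma.

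Next I would apply this reformulated criterion to the exchange pair $(\Sigma^2 X,\Sigma^2 X^*)$ in place of $(X,X^*)$. Because $\Sigma$ is a triangle autoequivalence, the exchange triangles for $(\Sigma^2 X,\Sigma^2 X^*)$ are obtained by applying $\Sigma^2$ to those of $(X,X^*)$; in particular their middle terms are $\Sigma^2 B$ and $\Sigma^2 B'$. Thus, whenever $\Sigma M\not\cong \Sigma^2 X$ and $\Sigma M\not\cong\Sigma^2 X^*$, compatibility of $M$ with $(\Sigma^2 X,\Sigma^2 X^*)$ reads
\[\dim_K\Hom_\cc(\Sigma^2 X,\Sigma^2 M)+\dim_K\Hom_\cc(\Sigma^2 X^*,\Sigma^2 M)=\max\{\dim_K\Hom_\cc(\Sigma^2 B,\Sigma^2 M),\dim_K\Hom_\cc(\Sigma^2 B',\Sigma^2 M)\}.\]

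Finally, since $\Sigma^2$ is an autoequivalence, it induces an isomorphism $\Hom_\cc(\Sigma^2 A,\Sigma^2 C)\cong \Hom_\cc(A,C)$ for all objects $A,C$, which simplifies each of the four dimensions on both sides and yields exactly the equation in the statement. The exceptional cases $\Sigma M\cong\Sigma^2 X$ and $\Sigma M\cong\Sigma^2 X^*$ become, after applying $\Sigma^{-1}$, the conditions $M\cong\Sigma X$ and $M\cong\Sigma X^*$, matching the hypotheses of the lemma. There is essentially no obstacle here beyond careful bookkeeping of the shifts; the only point worth checking explicitly is that $(\Sigma^2 X,\Sigma^2 X^*)$ really is an exchange pair with middle terms $\Sigma^2 B,\Sigma^2 B'$, which is immediate from the triangulated autoequivalence property of $\Sigma^2$.
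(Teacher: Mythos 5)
Your proposal is correct and follows essentially the same route as the paper: the paper's own justification consists of the 2-Calabi--Yau reformulation of compatibility in terms of $\Hom_\cc(-,\Sigma^2 M)$ together with the observation that $\Sigma^2$, being an autoequivalence, carries the exchange triangles of $(X,X^*)$ to those of $(\Sigma^2X,\Sigma^2X^*)$ and preserves all the $\Hom$-dimensions. Your bookkeeping of the exceptional cases $M\cong\Sigma X$, $M\cong\Sigma X^*$ is also exactly as intended.
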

The following lemma gives a sufficient condition on compatibility.
\begin{lemma}\label{l:judge for exchangeable}
Let $(X, X^*)$ be an exchange pair of $\cc$ with exchange triangles 
\[X^{*}\xrightarrow{f} B \xrightarrow{g} X \xrightarrow{}\Sigma X^* ~\text{and}~
 X \xrightarrow{f'} B' \xrightarrow{g'} X^{*}\xrightarrow{}\Sigma X.
 \]  
 Let $M$ be an indecomposable rigid object in $\cc$. If each morphism from $M\oplus \Sigma M$ to $X$ factors through $g$ or each morphism from $M\oplus \Sigma M$ to $X^*$ factors through $g'$, then $M$ is {compatible} with $(X,X^*)$.
\end{lemma}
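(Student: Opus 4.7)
The plan is to apply the cohomological functor $\Hom_\cc(M,-)$ to each of the two exchange triangles and read the desired identity off the resulting long exact sequences. Set $\alpha=\dim_K\Hom_\cc(M,X)$, $\beta=\dim_K\Hom_\cc(M,X^*)$, $\gamma=\dim_K\Hom_\cc(M,B)$ and $\delta=\dim_K\Hom_\cc(M,B')$; the target is $\alpha+\beta=\max(\gamma,\delta)$. Recall that compatibility is automatic when $X$ or $X^*$ is isomorphic to $\Sigma M$, so the content is the generic case.

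Assume first that every morphism from $M\oplus\Sigma M$ to $X$ factors through $g$, and write the connecting morphism of the first triangle as $h\colon X\to\Sigma X^*$. Applying $\Hom_\cc(M,-)$ to $X^*\xrightarrow{f}B\xrightarrow{g}X\xrightarrow{h}\Sigma X^*$ would produce the exact segment
\[\Hom_\cc(M,\Sigma^{-1}X)\xrightarrow{\partial}\Hom_\cc(M,X^*)\xrightarrow{f_*}\Hom_\cc(M,B)\xrightarrow{g_*}\Hom_\cc(M,X).\]
The factorization hypothesis on morphisms $M\to X$ says $g_*$ is surjective, so $\gamma=\alpha+\dim_K\im f_*$. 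I would then check that $f_*$ is injective, equivalently that $\partial$ vanishes: via the shift isomorphism $\Hom_\cc(M,\Sigma^{-1}X)\cong\Hom_\cc(\Sigma M,X)$ an element $a$ corresponds to $\Sigma a\colon\Sigma M\to X$, and the factorization hypothesis on $\Sigma M\to X$ gives $\Sigma a=g\circ b$ for some $b$; but $\partial$ is, up to sign, post-composition with $\Sigma^{-1}h$, and $h\circ\Sigma a=(h\circ g)\circ b=0$ since consecutive arrows of a triangle compose to zero. This forces $\dim_K\im f_*=\beta$ and hence $\gamma=\alpha+\beta$.

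For the second triangle $X\xrightarrow{f'}B'\xrightarrow{g'}X^*\to\Sigma X$ the same functor, applied to the middle three terms, yields an exact sequence
\[\Hom_\cc(M,X)\xrightarrow{f'_*}\Hom_\cc(M,B')\xrightarrow{g'_*}\Hom_\cc(M,X^*),\]
from which $\delta=\dim_K\im f'_*+\dim_K\im g'_*\leq\alpha+\beta$ is immediate. Combining the two estimates gives $\max(\gamma,\delta)=\gamma=\alpha+\beta$, as required. The symmetric case, in which every morphism from $M\oplus\Sigma M$ to $X^*$ factors through $g'$, is obtained by swapping the roles of the two exchange triangles: the same argument applied to the second triangle yields $\delta=\alpha+\beta$ and the four-term exactness for the first triangle gives $\gamma\leq\alpha+\beta$.

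The main obstacle I anticipate is justifying the vanishing of the connecting map $\partial$; this is where the hypothesis on $\Sigma M\to X$ is essential and where the shift autoequivalence together with the relation $hg=0$ come in. Once that point is pinned down, the rest of the proof is pure dimension-counting in the two long exact sequences.
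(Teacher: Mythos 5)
Your proof is correct and follows essentially the same route as the paper: apply $\Hom_\cc(M,-)$ to the exchange triangles and count dimensions, using the factorization hypothesis on morphisms $\Sigma M\to X$ to make the connecting map vanish (the paper phrases this as surjectivity of $\Hom_\cc(M,\Sigma^{-1}g)$, which is equivalent to your direct computation with $h\circ g=0$). The only difference is that you spell out the estimate $\dim_K\Hom_\cc(M,B')\leq\dim_K\Hom_\cc(M,X)+\dim_K\Hom_\cc(M,X^*)$ needed to identify the maximum, a step the paper leaves implicit.
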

\begin{proof}
Let us assume that each morphism from $M\oplus \Sigma M$ to $X$ factors through $g$.
 Applying the functor $\Hom_\cc(M,-)$ to the triangle $X^{*}\xrightarrow{f} B \xrightarrow{g} X \xrightarrow{}\Sigma X^*$ yields a long exact sequence
\begin{eqnarray*}
\cdots\Hom_\cc(M,\Sigma^{-1}B)\xrightarrow{\Hom_\cc(M,\Sigma^{-1}g)}\Hom_\cc(M,\Sigma^{-1}X)\xrightarrow{~~~~}\Hom_\cc(M,X^*)\\
\xrightarrow{\Hom_\cc(M,f)}\Hom_\cc(M,B)\xrightarrow{\Hom_\cc(M,g)}\Hom_\cc(M,X)\xrightarrow{}\Hom_\cc(M,\Sigma X^*)\cdots.
\end{eqnarray*}
By the assumption, we clearly know that both $\Hom_\cc(M,g)$ and $\Hom_\cc(M,\Sigma^{-1}g)$  are surjective.  In particular, $\dim_K\Hom_\cc(M,X)+\dim_K\Hom_\cc(M,X^*)=\dim_K\Hom_\cc(M,B)$.
Hence $M$ is compatible with the exchange pair $(X,X^*)$. Similarly one can prove the other case.
\end{proof}

\begin{proposition}~\label{p:exchange-n}
Suppose  that $X=(a,n), 1\leq a\leq n$ and $(X,X^*)$ is an exchange pair of $\cc$. Then  any indecomposable rigid object $M$ in $ \cc$ is compatible with  $(X,X^*).$
\end{proposition}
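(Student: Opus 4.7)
The plan is to verify the factorization hypothesis of Lemma~\ref{l:judge for exchangeable}. Since $l(X)=n$, Lemma~\ref{l:exchange triangle in cluster tube}(1) applies, forcing $X^{*}=(a+h,n)$ for some $1\leq h\leq n$, with $\dim_{K}\Ext^{1}_{\cc}(X,X^{*})=2$ and explicit exchange triangles
\[
X\to (a+h,n-h)^{\oplus 2}\to X^{*}\to \Sigma X,\qquad X^{*}\to (a,h-1)^{\oplus 2}\to X\to \Sigma X^{*}
\]
(with the convention that indecomposables of length $0$ are the zero object). Write $g\colon B=(a,h-1)^{\oplus 2}\to X$ and $g'\colon B'=(a+h,n-h)^{\oplus 2}\to X^{*}$. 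The task is then to show that every morphism $M\oplus \Sigma M\to X$ factors through $g$, or every morphism $M\oplus \Sigma M\to X^{*}$ factors through $g'$.

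I would decompose each relevant Hom-space via Lemma~\ref{l:compute-morphism-cluster-tube} into its $\ct$-part and its $\der$-part, and verify the factorizations for each piece independently. For a $\ct$-map $N\to X$ (with $N=M$ or $N=\Sigma M=\tau M$), Lemma~\ref{l:factor-morphism}(iii) produces a factorization through the $\ct$-inclusion $(a,h-1)\hookrightarrow X$ as soon as $(a,h-1),X\in N^{\sqsubset}$, since the remaining condition $X\in(a,h-1)^{\sqsubset}$ is automatic. Dually, Lemma~\ref{l:factor-morphism}(iv) yields the analogous criterion for $\der$-maps $N\to X$, and entirely symmetric criteria govern factorizations of $\ct$-maps and $\der$-maps through $g'$ into $X^{*}$.

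The proof is then completed by a case analysis on the position of $M$ in the AR-quiver of $\ct$ relative to the four wings $\cw_{X}$, $\cw_{X^{*}}$, $\cw_{\tau X}$, $\cw_{\tau X^{*}}$. Because $X$ and $X^{*}$ both have length $n$, their Hom-hammocks are highly constrained, and for each position of $M$ exactly one of the two alternatives offered by Lemma~\ref{l:judge for exchangeable} can be realised. The main obstacle I anticipate is the case $l(M)=n$ with $M\not\cong X,X^{*},\Sigma X,\Sigma X^{*}$: by Lemma~\ref{l:dim}(1) together with Lemma~\ref{l:compute-morphism-cluster-tube}, both $\Hom_{\cc}(M,X)$ and $\Hom_{\cc}(\Sigma M,X)$ are two-dimensional and split into a nonzero $\ct$-map and a nonzero $\der$-map, so all four of these components must be shown to factor simultaneously through the chosen exchange triangle. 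Exploiting the rotational symmetry of the length-$n$ indecomposables $X$, $X^{*}$, $M$, $\Sigma M$ in the tube, I expect this case to reduce cleanly to the hammock computations already encoded in Lemma~\ref{l:factor-morphism}(iii)--(iv).
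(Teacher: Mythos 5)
Your overall route is the one the paper takes: reduce via Lemma~\ref{l:judge for exchangeable} to a factorization statement, and treat the $\ct$- and $\der$-components of the relevant morphism spaces separately through Lemma~\ref{l:compute-morphism-cluster-tube} and Lemma~\ref{l:factor-morphism}(iii)--(iv). But as written the proposal has a genuine gap, and its blanket claim is even false at two objects. For $M\cong\Sigma^{-1}X=(a+1,n)$ and $M\cong\Sigma^{-1}X^{*}=(a+h+1,n)$ \emph{neither} alternative of Lemma~\ref{l:judge for exchangeable} can hold: taking $M=\Sigma^{-1}X^{*}$, Lemma~\ref{l:dim}(1) gives $\dim_K\Hom_{\cc}(M,X)=2$ and $\Hom_{\cc}(M,X^{*})=0$, while $\Hom_{\cc}(M,B)=0=\Hom_{\cc}(M,B')$ because $(a,h-1)$ and $(a+h,n-h)$ lie in $\cw_{\tau M}=\cw_{X^{*}}$ (or are zero); if either factorization alternative were true, the long exact sequence in the proof of Lemma~\ref{l:judge for exchangeable} would force $\dim_K\Hom_{\cc}(M,X)+\dim_K\Hom_{\cc}(M,X^{*})$ to equal $\dim_K\Hom_{\cc}(M,B)$ or $\dim_K\Hom_{\cc}(M,B')$, i.e.\ $2=0$. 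These two objects are compatible only through the definitional clause $X\cong\Sigma M$, $X^{*}\cong\Sigma M$ and must be split off at the start; your exclusion list ``$M\not\cong X,X^{*},\Sigma X,\Sigma X^{*}$'' is the wrong one (and, relatedly, for $M=\Sigma^{-2}X=(a+2,n)$ one has $\Hom_{\cc}(\Sigma M,X)=0$, so not all four components are nonzero in your length-$n$ discussion).

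More substantially, the entire content of the proposition is the verification you defer: for each position of $M$ one must check that one alternative holds \emph{simultaneously} for the $\ct$- and $\der$-parts of morphisms out of $M$ and out of $\Sigma M$, and $\Sigma M$ typically sits in a different hammock region than $M$, so the choices must be matched. Lemma~\ref{l:factor-morphism}(iii)/(iv) applies only when its hammock hypotheses are satisfied, and they are not always satisfied: in the paper's argument the rigid indecomposables outside $\{\Sigma^{-1}X,\Sigma^{-1}X^{*}\}$ are partitioned into seven regions, and in several of them (for instance $M\in\cw_{(a+h+1,n-1)}$, or $M\in{}^{\sqsupset}(a+h-1,1)\cap(a+h+2,n-1)^{\sqsubset}$) the factorization is obtained not from Lemma~\ref{l:factor-morphism} but from a vanishing statement --- $\Hom_{\cc}(M\oplus\Sigma M,\Sigma X^{*})=0$ via Lemma~\ref{l:dim}(2), or $\Hom_{\cc}(M\oplus\Sigma M,X^{*})=0$ read off the Hom-hammock --- fed into the long exact sequence. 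Your proposal asserts (``I expect this case to reduce cleanly'') rather than performs this analysis, and the case $l(M)=n$ you single out as the main obstacle is in fact among the easiest, since Lemma~\ref{l:dim}(1) makes all the relevant spaces $0$- or $2$-dimensional and determined by wings. Until the region-by-region verification is actually carried out, what you have is an outline of the paper's strategy, not a proof.
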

\begin{proof}
According to Lemma \ref{l:exchange triangle in cluster tube}, we may assume that $X^*=(a+h,n)$ for some $1\leq h\leq n$. Moreover,  the exchange triangles are of the following forms
\[(a,n)\to (a+h,n-h)\oplus (a+h,n-h)\xrightarrow{(f_1,f_2)} (a+h,n)\to\Sigma(a,n)\]
\[(a+h,n)\to (a,h-1)\oplus (a,h-1)\xrightarrow{(g_1,g_2)} (a,n)\to\Sigma(a+h,n).\]
Without loss of generality, we may assume that $f_1$ and $g_1$ ({\it resp.} $f_2$ and $g_2$) are non-zero $\ct$-maps ({\it resp.} $\der$-maps).

By definition, if $M\cong \Sigma^{-1}X=(a+1,n)$ or $M\cong \Sigma^{-1}X^*=(a+h+1,n)$, then $M$ is compatible with the exchange pair $(X, X^*)$. For the remaining case, by Lemma~\ref{l:judge for exchangeable}, it suffices to prove the following claim:

\noindent{\bf Claim:}
 either each morphism from $M\oplus \Sigma M$ to $X^*$ factors through $(f_1,f_2): (a+h,n-h)\oplus (a+h,n-h)\xrightarrow{} (a+h,n)$ or each morphism from 
$M\oplus \Sigma M$ to $X$ factors through $(g_1,g_2): (a,h-1)\oplus (a,h-1)\xrightarrow{} (a,n)$. 

For an indecomposable rigid object $M$ which is not isomorphic to $(a+1,n)$ nor $(a+h+1,n)$, $M$ belongs to one of the following 7 subsets: $\cw_{(a+h+1,n-1)}$, $~^\sqsupset(a+h-1,1)\cap (a+h+2,n-1)^{\sqsubset}$, $~^\sqsupset(a+h,1)\cap (a+h+2,n)^{\sqsubset}$,
$~^\sqsupset(a+h,n-h)\backslash ~^\sqsupset(a+h,1)$,
$~^\sqsupset(a+h,n-h+1)\cap (a+2,n-1)^\sqsubset$,
$~^\sqsupset(a+h,n-h+2)\cap (a+2,n)$, $(a+h,n)^\sqsupset\backslash (a+h,n-h+2)^\sqsupset$.
We separate the remaining part into $7$ cases ~({\it cf.} Figure 2.).

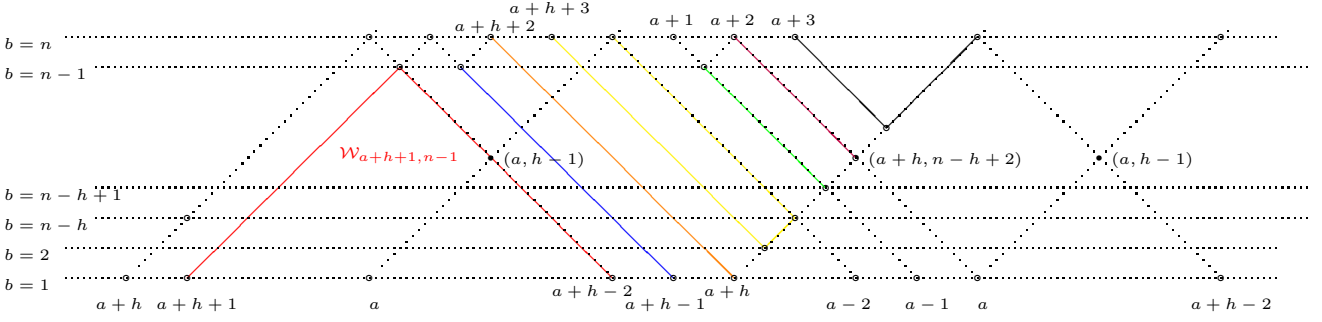
\begin{figure}
\setlength{\unitlength}{0.080cm}
\begin{picture}(220,50)

{\color{red}\put(20,0){\line(1,1){35}}
\put(55,35){\line(1,-1){35}}}
\put(20,0){\circle{1}}
\put(55,35){\circle{1}}
\put(45,20){\tiny{\color{red}$\cw_{a+h+1,n-1}$}}

{\color{blue}\put(65,35){\line(1,-1){35}}}
\put(65,35){\circle{1}}
\put(60,40){\circle{1}}

\put(70,40){\circle{1}}

{\color{orange}\put(70,40){\line(1,-1){40}}}
\put(110,0){\circle{1}}

{\color{yellow}
\put(80,40){\line(1,-1){35}}
\put(115,5){\line(1,1){5}}
\put(90,40){\line(1,-1){30}}
}

\put(100,40){\circle{1}}

\put(105,35){\circle{1}}
\put(125,15){\circle{1}}
{\color{green}\put(105,35){\line(1,-1){20}}}

\put(80,40){\circle{1}}

\put(110,40){\circle{1}}
\put(130,20){\circle{1}}
{\color{purple}\put(110,40){\line(1,-1){20}}}

\put(115,5){\circle{1}}

\put(120,40){\circle{1}}
\put(135,25){\circle{1}}
\put(135,25){\line(1,1){15}}
\put(120,40){\line(1,-1){15}}
\put(150,40){\circle{1}}
\put(190,40){\circle{1}}

\put(10,0){\circle{1}}

\put(50,0){\circle{1}}
\put(90,0){\circle{1}}
\put(100,0){\circle{1}}

\put(130,0){\circle{1}}
\put(140,0){\circle{1}}
\put(150,0){\circle{1}}
\put(190,0){\circle{1}}

\put(20,10){\circle{1}}
\put(120,10){\circle{1}}

\put(70,20){\circle*{1}}
\put(170,20){\circle*{1}}

\put(50,40){\circle{1}}
\put(90,40){\circle{1}}

\multiput(12,2)(1,1){40}{\line(1,0){0.2}}
\multiput(52,2)(1,1){40}{\line(1,0){0.2}}
\multiput(112,2)(1,1){40}{\line(1,0){0.2}}
\multiput(152,2)(1,1){20}{\line(1,0){0.2}}
\multiput(152,2)(1,1){40}{\line(1,0){0.2}}
\multiput(51,39)(1,-1){40}{\line(1,0){0.2}}
\multiput(91,39)(1,-1){40}{\line(1,0){0.2}}
\multiput(151,39)(1,-1){40}{\line(1,0){0.2}}
\multiput(101,39)(1,-1){40}{\line(1,0){0.2}}
\multiput(111,39)(1,-1){40}{\line(1,0){0.2}}
\multiput(60,40)(-1,-1){5}{\line(1,0){0.2}}
\multiput(60,40)(1,-1){5}{\line(1,0){0.2}}
\multiput(70,40)(-1,-1){5}{\line(1,0){0.2}}
\multiput(110,40)(-1,-1){5}{\line(1,0){0.2}}
\multiput(0,0)(1,0){200}{\line(1,0){0.2}}
\multiput(0,5)(1,0){200}{\line(1,0){0.2}}
\multiput(5,10)(1,0){200}{\line(1,0){0.2}}
\multiput(5,15)(1,0){200}{\line(1,0){0.2}}
\multiput(0,40)(1,0){200}{\line(1,0){0.2}}
\multiput(5,35)(1,0){200}{\line(1,0){0.2}}

\put(-10,-2){\tiny$b=1$}
\put(-10,3){\tiny$b=2$}
\put(-10,8){\tiny$b=n-h$}
\put(-10,13){\tiny$b=n-h+1$}
\put(-10,33){\tiny$b=n-1$}

\put(-10,38){\tiny$b=n$}

\put(5,-5){\tiny$a+h$}
\put(15,-5){\tiny$a+h+1$}
\put(64,41){\tiny$a+h+2$}
\put(73,44){\tiny$a+h+3$}
\put(50,-5){\tiny$a$}
\put(80,-3){\tiny$a+h-2$}
\put(92,-5){\tiny$a+h-1$}
\put(105,-3){\tiny$a+h$}
\put(125,-5){\tiny$a-2$}
\put(138,-5){\tiny$a-1$}
\put(150,-5){\tiny$a$}
\put(185,-5){\tiny$a+h-2$}

\put(96,42){\tiny$a+1$}
\put(106,42){\tiny$a+2$}
\put(116,42){\tiny$a+3$}
\put(132,19){\tiny$(a+h,n-h+2)$}
\put(72,19){\tiny$(a,h-1)$}
\put(172,19){\tiny$(a,h-1)$}
 \end{picture}
 \vspace{.15cm}
 \caption{An illustration of the $7$ subsets of indecomposable rigid objects $M$ with different colors.}
 \end{figure}
 
\begin{itemize}
\item[{\bf Case 1:}] $M\in \cw_{(a+h+1,n-1)}$. It is clear that $M$ and $\Sigma M$ belong to $\cw_{X^*}$ and $X^*=\Sigma^{-1}(a+h-1,n)$. By Lemma~\ref{l:dim} (2), we know that $\Hom_\cc(M\oplus \Sigma M, (a+h-1,n))=0$, 
which implies that
\[0=\Hom_\cc(M\oplus \Sigma M, (a+h-1,n))=
\Hom_\cc(M\oplus \Sigma M, \Sigma (a+h,n)).
\] Applying the functor $\Hom_{\cc}(M\oplus \Sigma M,-)$ to the triangle 
\[(a+h,n)\to (a,h-1)\oplus (a,h-1)\xrightarrow{(g_1,g_2)} (a,n)\to\Sigma(a+h,n),\]
 we conclude that each morphism from $M\oplus \Sigma M$ to $(a,n)$ factors through $(g_1,g_2)$.
 \item[{\bf Case 2:}] $M\in ~^\sqsupset(a+h-1,1)\cap (a+h+2,n-1)^{\sqsubset}$. By the Hom-hammock, it is not hard to see that $\Hom_\cc(M\oplus\Sigma M, (a+h,n))=0$. Consequently, each morphism from $M\oplus \Sigma M$ to $(a+h,n)$ factors through $(f_1,f_2): (a+h,n-h)\oplus (a+h,n-h)\xrightarrow{} (a+h,n)$.
 
 \item[{\bf Case 3:}] $M\in ~^\sqsupset(a+h,1)\cap (a+h+2,n)^{\sqsubset}$. It is clear that $\Hom_\cc(\Sigma M, (a+h,n))=0$ in this case. It remains to show that each morphism from $M$ to $(a+h,n)$ factors through $(f_1,f_2): (a+h,a-h)\oplus (a+h,a-h)\xrightarrow{} (a+h,n)$.
 Clearly we have $(a+h,n-h)\in M^{\sqsubset}\cap~^\sqsupset(a+h,n)$. According to Lemma~\ref{l:factor-morphism} (iii), each $\ct$-map from $M$ to $(a+h,n)$ factors through $f_1: (a+h,n-h)\xrightarrow{} (a+h,n)$.  By Lemma~\ref{l:factor-morphism} (iv), each $\der$-map from $M$ to $(a+h,n)$ factors through $f_2: (a+h,n-h)\xrightarrow{} (a+h,n)$. Thus each morphism from $M$ to $(a+h,n)$ factors through $(f_1,f_2)$.
  
 \item[{\bf Case 4:}] $M\in ~^\sqsupset(a+h,n-h)\backslash ~^\sqsupset(a+h,1)$. It follows from Lemma~\ref{l:factor-morphism} (iii) and (iv) that each morphism from $M$ to $(a+h,n)$ factors through $(f_1,f_2)$. Note that $\Sigma M$ belongs to either $~^\sqsupset(a+h,n-h)\backslash ~^\sqsupset(a+h,1)$ or $~^\sqsupset(a+h,1)\cap (a+h+2,n)^{\sqsubset}$. Thus the aforementioned proof applied and we conclude that each morphism from $\Sigma M$ to $(a+h,n)$ also factors through $(f_1,f_2)$.
 
 \item[{\bf Case 5:}] $M\in ~^\sqsupset(a+h,n-h+1)\cap (a+2,n-1)^\sqsubset$. 
 The claim follows from the fact that  $\Hom_\cc(M\oplus \Sigma M, (a,n))=0$ indicated by Lemma~\ref{l:dim} (2).
 
 \item[{\bf Case 6:}] $M\in~^\sqsupset(a+h,n-h+2)\cap (a+2,n)^{\sqsubset}$. According to Lemma~\ref{l:factor-morphism} (iii) and (iv), we know that each morphism from $M$ to $(a,n)$ factors through $(g_1,g_2)$. Then again the claim follows from the fact that  $\Hom_\cc(\Sigma M, (a,n))=0$ indicated by Lemma~\ref{l:dim} (2). 
 
 \item[{\bf Case 7:}] $M\in (a+h,n)^\sqsupset\backslash (a+h,n-h+2)^\sqsupset$.
The claim follows from Lemma~\ref{l:factor-morphism} (iii) and (iv).
\end{itemize}
\end{proof}

\begin{proposition}~\label{p:exchange}
Suppose that $X=(a,b)$ with $b<n$, and $(X,X^*)$ is an exchange pair. Then any indecomposable rigid object $M$ in $\cc$ is compatible with $(X,X^*)$.
\end{proposition}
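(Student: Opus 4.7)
The plan is to mimic the proof of Proposition~\ref{p:exchange-n} but to accommodate the additional case structure that arises because, when $b<n$, Lemma~\ref{l:exchange triangle in cluster tube}(2) gives $X^{*}=(a+h,b-h+i)$ with $1\le h\le b$, $1\le i\le n-b$, and the exchange triangles
\[
(a,b)\xrightarrow{(f_1',f_2')}(a,b+i)\oplus(a+h,b-h)\xrightarrow{}(a+h,b-h+i)\to\Sigma(a,b),
\]
\[
(a+h,b-h+i)\xrightarrow{}(a+b+1,i-1)\oplus(a,h-1)\xrightarrow{(g_1,g_2)}(a,b)\to\Sigma(a+h,b-h+i)
\]
now have middle terms which are direct sums of \emph{distinct} indecomposables. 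After fixing non-zero components so that $f_1',g_1$ (resp.\ $f_2',g_2$) are $\ct$-maps (resp.\ $\der$-maps), we again invoke Lemma~\ref{l:judge for exchangeable} to reduce the proof to showing that, whenever $M\not\cong\Sigma X$ and $M\not\cong\Sigma X^{*}$, either every morphism from $M\oplus\Sigma M$ to $X$ factors through $(g_1,g_2)$, or every morphism from $M\oplus\Sigma M$ to $X^{*}$ factors through $(f_1',f_2')$.

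To verify this dichotomy I would carry out a case analysis indexed by the position of $M$ in the AR-quiver of $\ct$ relative to the six hammocks attached to $X$, $X^{*}$ and the four middle summands $(a,b+i)$, $(a+h,b-h)$, $(a+b+1,i-1)$, $(a,h-1)$. Concretely, the complement of $\{\Sigma X,\Sigma X^{*}\}$ among indecomposable rigid objects is partitioned by the boundaries ${}^{\sqsupset}X$, $X^{\sqsubset}$, ${}^{\sqsupset}X^{*}$, $(X^{*})^{\sqsubset}$, together with their $\tau^{2}$-translates (which, via Lemma~\ref{l:compute-morphism-cluster-tube}, control the $\der$-maps). In each region, one of three things happens: (i) both $\Hom_{\cc}(M,X)=0$ and $\Hom_{\cc}(\Sigma M,X)=0$ (or the analogue for $X^{*}$), using the Hom-hammock and Lemma~\ref{l:dim}(2), in which case the factorization is vacuous; (ii) every $\ct$-map and every $\der$-map from $M$ (resp.\ $\Sigma M$) to $X$ or $X^{*}$ factors through the appropriate middle summand by Lemma~\ref{l:factor-morphism}(iii) and~(iv); or (iii) a mixture, where one of $M,\Sigma M$ is handled by vanishing and the other by the factorization lemma. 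Combining these observations in each region yields one of the two required factorizations and hence compatibility by Lemma~\ref{l:judge for exchangeable}.

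The main obstacle compared with Proposition~\ref{p:exchange-n} is purely bookkeeping: because the middle terms are now four distinct indecomposables rather than two pairs of copies, and because both $X$ and $X^{*}$ may now have length strictly less than $n$, there are more subregions to enumerate (a rough count suggests on the order of a dozen, governed by the parameters $h,i$). One has to check that Lemma~\ref{l:factor-morphism}(iii) applies at the correct middle summand in each region, \ie that the chosen middle summand indeed lies in both $M^{\sqsubset}$ and ${}^{\sqsupset}X$ (or the corresponding $\tau^{-2}$-shifted condition for $\der$-maps in part~(iv)). Once the regions are drawn on the AR-quiver and the six hammocks are superimposed—exactly as in Figure~2 of the proof of Proposition~\ref{p:exchange-n}—the region-by-region verification is mechanical, so I expect the proof to amount to a (longer) figure and a case list analogous to the seven cases already treated for the length-$n$ exchange pair.
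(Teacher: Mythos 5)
Your overall strategy coincides with the paper's: reduce to the factorization dichotomy via Lemma~\ref{l:judge for exchangeable} and then check, region by region on the AR-quiver, that either $\Hom_\cc(M\oplus\Sigma M,-)$ vanishes or the morphisms factor through the middle term, using the Hom-hammock, Lemma~\ref{l:dim} and Lemma~\ref{l:factor-morphism}. However, there is a genuine gap, and it starts with your ``without loss of generality'' assignment of $\ct$- and $\der$-components. When $b<n$ one checks from the hammock that there is \emph{no} non-zero $\der$-map from $(a,b)$ to $(a+h,b-h)$, none from $(a,h-1)$ to $(a,b)$, and no non-zero $\ct$-map from $(a+b+1,i-1)$ to $(a,b)$; so your choice (``$f_2'$ and $g_2$ are $\der$-maps'') is impossible. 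The correct configuration, as in the paper, is that both components of the approximation map onto $X^*=(a+h,b-h+i)$ are $\ct$-maps, while on the $X$-side $g_1:(a+b+1,i-1)\to(a,b)$ is forced to be a $\der$-map and $g_2:(a,h-1)\to(a,b)$ is a $\ct$-map. This is precisely where the situation differs from Proposition~\ref{p:exchange-n}, where each middle term carries one $\ct$- and one $\der$-component so that parts (iii) and (iv) of Lemma~\ref{l:factor-morphism} between them cover all morphisms.

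The consequence is that your plan of applying Lemma~\ref{l:factor-morphism}(iv) ``at the correct middle summand'' breaks down exactly in the regions where $\Hom_\cc(M\oplus\Sigma M,X^*)$ contains non-zero $\der$-maps: part (iv) only factors $\der$-maps through a $\der$-map, and the map onto $X^*$ has no $\der$-component to offer. The missing idea, which the paper supplies both in its preliminary argument for $M\notin(\Sigma^{-1}X)^{\sqsubset}\cap{}^{\sqsupset}(\Sigma^{-1}X^*)$ and again in its Case 2, is a duality transfer: by Lemma~\ref{l:factor-morphism}(i),(iii) the $\ct$-map $f_1:(a,b+i)\to X^*$ induces an isomorphism $\Hom_\ct(\tau^{-2}X^*,M)\xrightarrow{\sim}\Hom_\ct(\tau^{-2}(a,b+i),M)$, and applying $D$ together with the identification $D\Hom_{\der^b(\ct)}(-,\tau^{-1}\Sigma X^*)\cong\Hom_\ct(\tau^{-2}X^*,-)$ from Lemma~\ref{l:compute-morphism-cluster-tube} shows that every $\der$-map $M\to X^*$ factors through the $\ct$-map $f_1$. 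The same device is needed on the $X$-side whenever one wants to route $\der$-maps through $g_2$ rather than $g_1$. Without this step the ``mechanical'' verification cannot be completed in those regions; with it, your case list essentially reproduces the paper's proof (which organizes the bookkeeping as one uniform argument outside $(\Sigma^{-1}X)^{\sqsubset}\cap{}^{\sqsupset}(\Sigma^{-1}X^*)$ followed by four cases inside, rather than a dozen regions).
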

\begin{proof}
By Lemma \ref{l:exchange triangle in cluster tube}, we may assume that  $X^*=(a+h,b-h+i)$ and  the exchange triangles are of the following forms:
\[(a,b)\ra (a,b+i)\oplus(a+h,b-h)\xrightarrow{(f_1,f_2)} (a+h,b-h+i)\ra \Sigma (a,b)\]
\[(a+h,b-h+i)\ra (a+b+1,i-1)\oplus(a,h-1)\xrightarrow{(g_1,g_2)} (a,b)\ra \Sigma (a+h,b-h+i),
\]
where $1\leq a\leq n+1, 1\leq b<n, 1\leq h\leq b, 1\leq i\leq n-b$. Without loss of generality, we may assume that $f_1,f_2,g_2$ are $\ct$-maps and $g_1$ is a $\der$-map. 
The result is clear for $M\cong \Sigma^{-1}X$ or $M\cong \Sigma^{-1}X^*$. In the following, we assume that $M\not\cong \Sigma^{-1}X$ and $M\not\cong \Sigma^{-1}X^*$.
According to Lemma~\ref{l:judge for exchangeable}, it suffices to show that either each non-zero morphism from $M\oplus \Sigma M$ to $X^*$ factors through $(f_1,f_2): (a,b+i)\oplus (a+h,b-h)\xrightarrow{} X^*$ or each non-zero morphism from $M\oplus \Sigma M$ to $X$ factors through $(g_1,g_2): (a+b+1,i-1)\oplus (a,h-1)\xrightarrow{}(a,b)$.

Let us first consider that $M\not\in (\Sigma^{-1}X)^{\sqsubset}\cap~^\sqsupset(\Sigma^{-1}X^*)$. Let $f: M\to X^*$ be a non-zero $\ct$-map. Consequently, $M\in~^\sqsupset X^*$.
It is not hard to see that either $B_1:=(a,b+i)\in M^\sqsubset\cap~^\sqsupset X^*$ or $B_2:=(a+h,b-h)\in M^\sqsubset\cap~^\sqsupset X^*$. By Lemma~\ref{l:factor-morphism} (iii), we deduce that $f$ factors through $(f_1,f_2): (a,b+i)\oplus (a+h,b-h)\xrightarrow{} X^*$. Now assume that $g:M\to X^*$ is a non-zero $\der$-map, then $M\in (\Sigma^{-2}{X^*})^{\sqsubset}$. Note that $(\Sigma^{-2}{X^*})^{\sqsubset}\subset (\Sigma^{-2}B_1)^{\sqsubset}$. In particular by Lemma~\ref{l:factor-morphism} (i)(iii), the morphism $f_1$ yields an isomorphism of vector spaces
\[\Hom_\ct(\Sigma^{-2}f_1,M):\Hom_\ct(\Sigma^{-2}X^*, M)\xrightarrow{\sim} \Hom_\ct(\Sigma^{-2}B_1, M)
\]
 By the AR-duality, we have an isomorphism
\[\Hom_\ct(M,\tau^{-1}\circ\Sigma f_1):\Hom_\ct(M,\tau^{-1}\circ\Sigma B_1)\xrightarrow{\sim} \Hom_\ct(M,\tau^{-1}\circ\Sigma X^*),
\]
which implies that $g$ factors through $(f_1,f_2): (a,b+i)\oplus (a+h,b-h)\xrightarrow{} X^*$. In particular, we have proved that  each non-zero morphism from $M$ to  $X^*$ factors through the morphism $(f_1, f_2)$ for $M\not\in (\Sigma^{-1}X)^{\sqsubset}\cap~^\sqsupset(\Sigma^{-1}X^*)$. On the other hand, for an indecomposable rigid object $M\not\in (\Sigma^{-1}X^{\sqsubset})\cap~^\sqsupset(\Sigma^{-1}X^*)$, we have either $\Sigma M\not\in (\Sigma^{-1}X)^{\sqsubset}\cap~^\sqsupset(\Sigma^{-1}X^*)$ or $\Sigma M\in ((\Sigma^{-1}B_1)^{\sqsubset}\cap ^{\sqsupset}(\Sigma^{-1} X^*))\cup ((\Sigma^{-1}B_2)^{\sqsubset}\cap ^{\sqsupset}(\Sigma^{-1} X^*))=:\mathcal{S}$.   If $\Sigma M\not\in (\Sigma^{-1}X)^{\sqsubset}\cap~^\sqsupset(\Sigma^{-1}X^*)$, then the above discussion implies that each non-zero morphism from $\Sigma M$ to $X^*$ factors through the morphism $(f_1, f_2)$. If $\Sigma M\in \mathcal{S}$, it is easy to see that $\Hom_\cc(\Sigma M, X^*)=0$ by the $\Hom$-hammock. Hence we conclude that $M$ is compatible with the exchange pair $(X, X^*)$ whenever $M\not\in (\Sigma^{-1}X)^{\sqsubset}\cap~^\sqsupset(\Sigma^{-1}X^*)$.

It remains to consider the situation $M\in (\Sigma^{-1}X)^{\sqsubset}\cap~^\sqsupset(\Sigma^{-1}X^*)$ and we will divide the proof into $4$ cases ({\it cf.} Figure 3.).

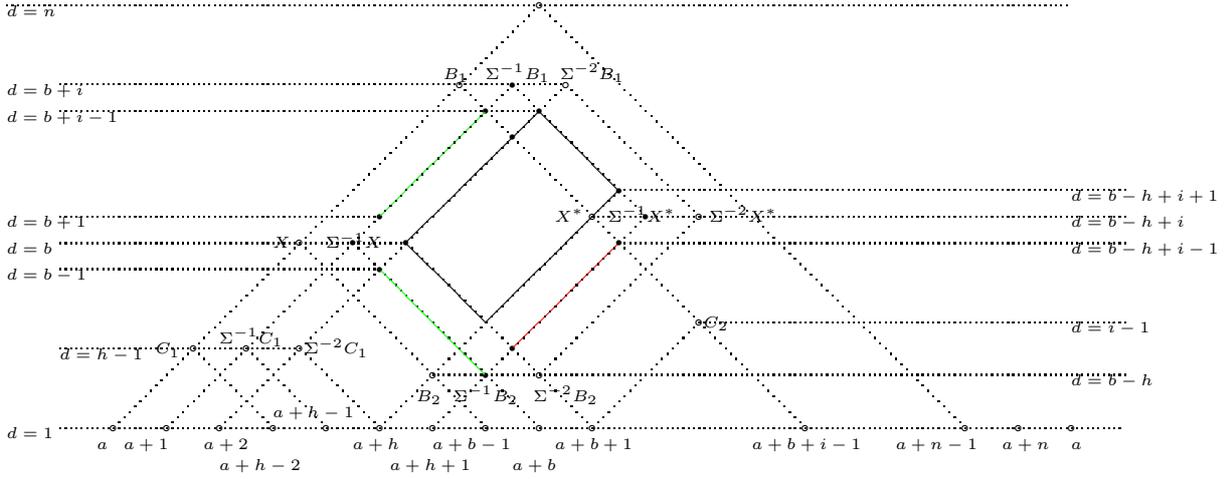
\begin{figure}
\setlength{\unitlength}{0.070cm}
\begin{picture}(290,90)
\put(15,0){\circle{1}}
\put(25,0){\circle{1}}
\put(35,0){\circle{1}}
\put(65,0){\circle{1}}
\put(45,0){\circle{1}}
\put(55,0){\circle{1}}
\put(105,0){\circle{1}}
\put(95,0){\circle{1}}
\put(85,0){\circle{1}}
\put(145,0){\circle{1}}
\put(175,0){\circle{1}}
\put(185,0){\circle{1}}
\put(195,0){\circle{1}}
\put(75,0){\circle{1}}
\multiput(75,0)(1,1){15}{\line(1,0){0.2}}

\put(30,15){\circle{1}}
\put(40,15){\circle{1}}
\put(50,15){\circle{1}}

\put(75,10){\circle{1}}
\put(85,10){\circle*{1}}
\put(95,10){\circle{1}}
\put(90,15){\circle*{1}}

\put(125,20){\circle{1}}

\put(80,65){\circle{1}}
\put(90,65){\circle*{1}}
\put(100,65){\circle{1}}
\put(95,60){\circle*{1}}
\put(85,60){\circle*{1}}
\put(90,55){\circle*{1}}

\put(50,35){\circle{1}}
\put(60,35){\circle*{1}}
\put(70,35){\circle*{1}}
\multiput(60,35)(1,1){30}{\line(1,0){0.2}}
\multiput(60,35)(1,-1){35}{\line(1,0){0.2}}
\multiput(70,35)(1,1){30}{\line(1,0){0.2}}
\multiput(70,35)(1,-1){35}{\line(1,0){0.2}}
\put(65,40){\circle*{1}}
\put(65,30){\circle*{1}}
{\color{green}\put(65,40){\line(1,1){20}}
\put(65,30){\line(1,-1){20}}}
\put(70,35){\line(1,1){25}}
\put(70,35){\line(1,-1){15}}
\put(85,20){\line(1,1){25}}
{\color{red}\put(90,15){\line(1,1){20}}}
\put(110,45){\line(-1,1){15}}

\put(105,40){\circle{1}}
\put(110,45){\circle*{1}}
\put(110,35){\circle*{1}}
\put(115,40){\circle*{1}}
\put(125,40){\circle{1}}
\multiput(110,45)(-1,-1){5}{\line(1,0){0.2}}
\multiput(115,40)(-1,-1){30}{\line(1,0){0.2}}
\multiput(115,40)(-1,1){25}{\line(1,0){0.2}}
\multiput(125,40)(-1,-1){30}{\line(1,0){0.2}}
\multiput(125,40)(-1,1){25}{\line(1,0){0.2}}

\put(95,80){\circle{1}}

\multiput(65,0)(1,1){40}{\line(1,0){0.2}}
\multiput(80,65)(1,-1){65}{\line(1,0){0.2}}
\multiput(95,80)(1,-1){80}{\line(1,0){0.2}}
\multiput(15,0)(1,1){80}{\line(1,0){0.2}}
\multiput(25,0)(1,1){65}{\line(1,0){0.2}}
\multiput(35,0)(1,1){65}{\line(1,0){0.2}}
\multiput(105,0)(1,1){20}{\line(1,0){0.2}}
\multiput(30,15)(1,-1){15}{\line(1,0){0.2}}
\multiput(85,0)(-1,1){35}{\line(1,0){0.2}}
\multiput(65,0)(-1,1){15}{\line(1,0){0.2}}
\multiput(55,0)(-1,1){15}{\line(1,0){0.2}}

\multiput(5,0)(1,0){200}{\line(1,0){0.2}}
\put(-5,-2){\tiny$d=1$}
\multiput(-5,80)(1,0){200}{\line(1,0){0.2}}
\put(-5,78){\tiny$d=n$}
\multiput(5,35)(1,0){65}{\line(1,0){0.2}}
\put(-5,33){\tiny$d=b$}
\multiput(5,30)(1,0){60}{\line(1,0){0.2}}
\put(-5,28){\tiny$d=b-1$}
\multiput(5,40)(1,0){60}{\line(1,0){0.2}}
\put(-5,38){\tiny$d=b+1$}
\multiput(5,15)(1,0){45}{\line(1,0){0.2}}
\put(5,13){\tiny$d=h-1$}
\multiput(205,40)(-1,0){100}{\line(1,0){0.2}}
\put(195,38){\tiny$d=b-h+i$}
\multiput(205,35)(-1,0){95}{\line(1,0){0.2}}
\put(195,33){\tiny$d=b-h+i-1$}
\multiput(205,45)(-1,0){95}{\line(1,0){0.2}}
\put(195,43){\tiny$d=b-h+i+1$}
\multiput(205,10)(-1,0){130}{\line(1,0){0.2}}
\put(195,8){\tiny$d=b-h$}

\multiput(205,20)(-1,0){80}{\line(1,0){0.2}}
\put(195,18){\tiny$d=i-1$}

\multiput(5,65)(1,0){95}{\line(1,0){0.2}}
\put(-5,63){\tiny$d=b+i$}
\multiput(5,60)(1,0){90}{\line(1,0){0.2}}
\put(-5,58){\tiny$d=b+i-1$}
\put(45,34){\tiny$X$}
\put(55,34){\tiny$\Sigma^{-1}X$}
\put(98,39){\tiny$X^*$}
\put(108,39){\tiny$\Sigma^{-1}X^*$}
\put(127,39){\tiny$\Sigma^{-2}X^*$}
\put(77,66){\tiny$B_1$}
\put(85,66){\tiny$\Sigma^{-1}B_1$}
\put(99,66){\tiny$\Sigma^{-2}B_1$}
\put(72,5){\tiny$B_2$}
\put(79,5){\tiny$\Sigma^{-1}B_2$}
\put(94,5){\tiny$\Sigma^{-2}B_2$}
\put(23,14){\tiny$C_1$}
\put(35,16){\tiny$\Sigma^{-1}C_1$}
\put(51,14){\tiny$\Sigma^{-2}C_1$}
\put(126,19){\tiny$C_2$}

\put(12,-4){\tiny$a$}
\put(17,-4){\tiny$a+1$}
\put(32,-4){\tiny$a+2$}
\put(35,-8){\tiny$a+h-2$}
\put(60,-4){\tiny$a+h$}
\put(67,-8){\tiny$a+h+1$}
\put(75,-4){\tiny$a+b-1$}
\put(98,-4){\tiny$a+b+1$}
\put(135,-4){\tiny$a+b+i-1$}
\put(162,-4){\tiny$a+n-1$}
\put(182,-4){\tiny$a+n$}
\put(195,-4){\tiny$a$}
\put(45,2){\tiny$a+h-1$}
\put(90,-8){\tiny$a+b$}
\end{picture}
\vspace{.15cm}
\caption{An illustration of the $4$ subsets of indecomposable rigid objects $M$ with different colors, where $B_1= (a,b+i),B_2=(a+h,b-h),C_1=(a,h-1),C_2= (a+b+1,i-1).$}
\end{figure}
\begin{itemize}
\item[{\bf Case 1:}] $M\in (a+1,b+1)^{\sqsubset}\cap~^\sqsupset(a+1,b+i-1)$ or $(a+2,b-1)^{\sqsubset}\cap~^{\sqsupset}(a+h+1,b-h)$. Since $b+i\leq n$, it is not hard to see that $\Hom_\cc(M\oplus \Sigma M, X)=0$.
Consequently, $M$ is compatible with $(X,X^*)$ in this case.

\item[{\bf Case 2:}] $M\in (\Sigma^{-2}X)^{\sqsubset}\cap~^{\sqsupset}(a+h,b-h+i+1)$. Again by $b+i\leq n$, we deduce that $\Hom_\ct(M,X)=0$. On the other hand, we have $M\in (a+2,h-1)^{\sqsubset}$. By Lemma~\ref{l:factor-morphism} (i)(iii) and the AR-duality, the morphism $g_2$ also induces an isomorphism
\[\Hom_{\der^b(\ct)}(M,g_2): \Hom_{\der^b(\ct)}(M,\tau^{-1}\circ \Sigma (a,h-1))\xrightarrow{\sim}\Hom_{\der^b(\ct)}(M,\tau^{-1}\circ\Sigma X).
\]
Consequently, each $\der$-map from $M$ to $X$ factors through $(g_1,g_2)$.
Note that $\Sigma M$ either is isomorphic to $\Sigma^{-1}X$ or belongs to $(a+1,b+1)^{\sqsubset}\cap~^\sqsupset(a+1,b+i-1)$, $(a+2,b-1)^{\sqsubset}\cap~^{\sqsupset}(a+h+1,b-h)$ or $(\Sigma^{-2}X)^{\sqsubset}\cap~^{\sqsupset}(a+h,b-h+i+1)$. If $\Sigma M\cong \Sigma^{-1}X$, then $\Hom_\cc(\Sigma M, X)=0$. Therefore each morphism from $M\oplus \Sigma M$ to $X$ factors through $(g_1,g_2)$ in this situation. If $\Sigma M$ belongs to one of the three sets $(a+1,b+1)^{\sqsubset}\cap~^\sqsupset(a+1,b+i-1)$, $(a+2,b-1)^{\sqsubset}\cap~^{\sqsupset}(a+h+1,b-h)$ and $(\Sigma^{-2}X)^{\sqsubset}\cap~^{\sqsupset}(a+h,b-h+i+1)$, then by the aforementioned proof we know that each morphism from $\Sigma M$ to $X$ also factors through $(g_1,g_2)$. Hence $M$ is compatible with $(X, X^*)$.

\item[{\bf Case 3:}] $M\in (a+h+1,b-h+1)^\sqsubset\cap~^{\sqsupset}(a+h+1, b-h+i-1)$. It is clear that $\Hom_\ct(M, X)=0$. We also have $M\in (\Sigma^{-2}X)^\sqsubset\cap~^{\sqsupset}(a+b+1,i-1)$.
Now by Lemma~\ref{l:factor-morphism} (iv), each $\der$-map from $M$ to $X$ factors through $g_1:(a+b+1,i-1)\to X$.
On the other hand, the object $\Sigma M$ either belongs to 
$(a+2,b-1)^{\sqsubset}\cap~^{\sqsupset}(a+h+1,b-h)$ or $(\Sigma^{-2}X)^{\sqsubset}\cap~^{\sqsupset}(a+h,b-h+i+1)$. Thus by the proof of Case 1 and Case 2, we deduce that each morphism from $\Sigma M$ to $X$ factors through $(g_1,g_2)$.

\item[{\bf Case 4:}]$M\cong \Sigma^{-1}(a,b+i)$. By $b<n$, one can easily show that $\Hom_\cc(M\oplus \Sigma M, X)=0$.
\end{itemize}
This completes the proof.
\end{proof}

Combining Proposition~\ref{p:exchange-n} with \ref{p:exchange}, we have proved the following result.
\begin{theorem}~\label{t:exchange compatible}
Each indecomposable rigid object in $\cc$ is exchange compatible.
\end{theorem}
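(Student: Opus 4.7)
The plan is to observe that the theorem is essentially a packaging statement: once one has established compatibility case by case for every possible exchange pair, the conclusion follows by a direct union argument. So I would set up the reduction to the two length-regimes provided by Lemma~\ref{l:exchange triangle in cluster tube} and then invoke the two propositions that have just been proved.

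Concretely, let $M$ be an indecomposable rigid object of $\cc$ and let $(X,X^*)$ be an arbitrary exchange pair. I would first use Lemma~\ref{l:exchange triangle in cluster tube} to split the analysis according to $\dim_K\Ext^1_\cc(X,X^*)$, i.e.\ according to whether $l(X)=n$ or $l(X)<n$. In the first regime (with $X=(a,n)$ for some $1\le a\le n+1$, and $X^*=(a+h,n)$), compatibility of every indecomposable rigid $M$ with $(X,X^*)$ is exactly the content of Proposition~\ref{p:exchange-n}. In the second regime (with $X=(a,b)$, $b<n$, and $X^*=(a+h,b-h+i)$), compatibility is exactly the content of Proposition~\ref{p:exchange}. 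Since by Lemma~\ref{l:exchange triangle in cluster tube} every exchange pair falls into one of these two families, this exhausts all possibilities.

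The only minor subtlety is to make sure that the symmetry of the definition of exchange pair has been used consistently, i.e.\ that it does not matter which of the two members of the pair is labelled $X$ and which is $X^*$. This is transparent from the form of the two exchange triangles, which are interchanged by swapping $X$ and $X^*$, so Proposition~\ref{p:exchange-n} and Proposition~\ref{p:exchange} apply regardless of the labelling. The genuine technical obstacle was already overcome inside the two propositions, where one had to run a careful hom-hammock case analysis using Lemma~\ref{l:factor-morphism} and Lemma~\ref{l:judge for exchangeable} to verify the key identity
\[\dim_K\Hom_\cc(M,X)+\dim_K\Hom_\cc(M,X^*)=\max\{\dim_K\Hom_\cc(M,B),\dim_K\Hom_\cc(M,B')\}\]
for each of the seven (resp.\ four) regions into which $M$ can fall relative to the exchange pair.

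Thus the proof reduces to a one-line assembly: any indecomposable rigid $M$, together with any exchange pair $(X,X^*)$, is covered either by Proposition~\ref{p:exchange-n} or by Proposition~\ref{p:exchange}, and in each case $M$ is compatible with $(X,X^*)$; hence $M$ is exchange compatible, which is what the theorem asserts.
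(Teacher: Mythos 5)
Your proposal is correct and matches the paper's own proof, which is precisely the one-line assembly of Proposition~\ref{p:exchange-n} and Proposition~\ref{p:exchange}: every exchange pair has $l(X)=n$ or $l(X)<n$, so the two propositions exhaust all cases. Your extra remark on the symmetry of the labelling of $(X,X^*)$ is a harmless clarification and does not change the argument.
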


\subsection{Rank vectors of indecomposable $\tau$-rigid modules}~\label{ss:rank-vector}

Let $T=\overline{T}\oplus T_k$ be a basic maximal rigid object of $\cc$ with indecomposable direct summand $T_k$. Let $T'=\overline{T}\oplus T_k^*$ be the mutation of $T$ at the indecomposable direct summand $T_k$. Denote by $\Gamma=\End_{\cc}(T)$ and $\Gamma'=\End_{\cc}(T')$ the endomorphism algebras of $T$ and $T'$ respectively. The following lemma is a reformulation of Proposition~4.5 of ~\cite{FuGeng}.
\begin{lemma}~\label{l:dimension-vector-vs-mutation}
 If different indecomposable $\tau$-rigid $\Gamma'$-modules have different dimension vectors, then different indecomposable $\tau$-rigid $\Gamma$-modules have different dimension vectors.
\end{lemma}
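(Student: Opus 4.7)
The plan is to argue by contrapositive: assume indecomposable rigid objects $M \not\cong N$ of $\cc$, neither in $\add \Sigma T$, satisfy that $F(M) = \Hom_\cc(T, M)$ and $F(N)$ share the same $\Gamma$-dimension vector, and produce two non-isomorphic indecomposable $\tau$-rigid $\Gamma'$-modules with a common dimension vector.

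I would split on whether $M$ or $N$ equals $\Sigma T_k^*$. In the principal case, $M, N \not\cong \Sigma T_k^*$, so $M, N \notin \add \Sigma T'$ either and $F'(M), F'(N)$ are non-isomorphic indecomposable $\tau$-rigid $\Gamma'$-modules via Theorem~\ref{t:LiuXie-ChangZhangZhu}. Their $\Gamma'$-dimension vectors already agree at every index $i \neq k$ since $T_i' = T_i$, and equality at the $k$-th index follows from exchange compatibility (Theorem~\ref{t:exchange compatible}): because $\Sigma T_k \not\cong M \not\cong \Sigma T_k^*$, Lemma~\ref{l:compatible} delivers the identity
\[
\dim_K \Hom_\cc(T_k, M) + \dim_K \Hom_\cc(T_k^*, M) = \max\{\dim_K \Hom_\cc(B, M), \dim_K \Hom_\cc(B', M)\},
\]
and the analogous one for $N$, where $B, B' \in \add \overline{T}$ come from the exchange triangles. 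All terms except $\dim_K \Hom_\cc(T_k^*, -)$ are determined by the $\Gamma$-dimension vector of $F(?)$, so they agree for $M$ and $N$, forcing $\dim_K \Hom_\cc(T_k^*, M) = \dim_K \Hom_\cc(T_k^*, N)$ and completing this case.

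The delicate case is when, say, $M \cong \Sigma T_k^*$: then $M$ is \emph{not} represented by any indecomposable $\tau$-rigid $\Gamma'$-module, so the strategy above breaks. I would dispatch this case by showing it cannot occur at all. A direct computation using the rigidity of $\overline{T} \oplus T_k^*$ gives $F(\Sigma T_k^*)$ the $\Gamma$-dimension vector $d \cdot e_k$, where $d = \dim_K \Ext^1_\cc(T_k, T_k^*) \in \{1,2\}$ by Lemma~\ref{l:exchange triangle in cluster tube}. If some $N \not\cong \Sigma T_k^*$ (and hence $N \not\cong \Sigma T_k$ by $N \notin \add \Sigma T$) shared this dimension vector, then vanishing at indices $i \neq k$ would yield $\dim_K \Hom_\cc(B, N) = \dim_K \Hom_\cc(B', N) = 0$, whereas exchange compatibility for $N$ would demand $d + \dim_K \Hom_\cc(T_k^*, N) = 0$, an arithmetic impossibility.

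I expect the main obstacle to be precisely this asymmetric case where $M$ is swept into $\add \Sigma T'$ by the mutation, and its resolution hinges on the strict positivity $d > 0$ supplied by Lemma~\ref{l:exchange triangle in cluster tube}, a feature specific to exchange pairs in the cluster tube; once that is in hand the rest is bookkeeping around the exchange compatibility identity.
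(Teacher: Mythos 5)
Your proposal is correct, and its main case is exactly the paper's argument: rule out the boundary case, then use Theorem~\ref{t:exchange compatible} together with Lemma~\ref{l:compatible} to see that all terms in the compatibility identity except $\dim_K\Hom_\cc(T_k^*,-)$ are read off from the $\Gamma$-dimension vector (the $k$-th entry for $T_k$, the entries at $i\neq k$ for $B,B'\in\add\overline{T}$), forcing $\dim_K\Hom_\cc(T_k^*,M)=\dim_K\Hom_\cc(T_k^*,N)$ and hence equal $\Gamma'$-dimension vectors for the non-isomorphic indecomposable $\tau$-rigid modules $F'(M),F'(N)$. The only divergence is the case $M\cong\Sigma T_k^*$: the paper computes $\dimv F(\Sigma T_k^*)=e_k$ or $2e_k$ via Lemma~\ref{l:exchange triangle in cluster tube} and then argues that an indecomposable $\Gamma$-module with that dimension vector is unique up to isomorphism, so $F(N)\cong F(M)$ and the equivalence $F$ gives $N\cong M$, a contradiction; you instead apply the exchange-compatibility identity to $N$ itself, noting $\Hom_\cc(B,N)=\Hom_\cc(B',N)=0$ while the $k$-th entry $d=\dim_K\Ext^1_\cc(T_k,T_k^*)\geq 1$, which is an arithmetic impossibility. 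Your variant is valid and has the small advantage of not needing the uniqueness of the indecomposable module of dimension vector $2e_k$ at the loop vertex (which in the paper rests on the local structure $f_k\Gamma f_k\cong K[\rho]/(\rho^2)$); the paper's variant is more direct and does not reuse compatibility in the boundary case. Either way the proof goes through.
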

\begin{proof}
Let $T_k\xrightarrow{f}B\xrightarrow{g}T_k^*\to \Sigma T_K$ and $T_k^*\xrightarrow{f'}B'\xrightarrow{g'}T_k\to\Sigma T_k^*$ be the exchange triangles associated to the exchange pair $(T_k,T_k^*)$.
Suppose that there are two non-isomorphic indecomposable rigid objects $M,N\in \cc\backslash \add \Sigma T$ such that $\dimv \Hom_\cc(T,M)=\dimv \Hom_\cc(T,N)$. We claim that $M\not\cong \Sigma T_k^*$ and $N\not\cong \Sigma T_k^*$. Otherwise, assume that $M\cong \Sigma T_k^*$. According to Lemma~\ref{l:exchange triangle in cluster tube}, we conclude that 
\begin{itemize}
\item[$\circ$]$\Hom_\cc(T,M)$ is the simple $\Gamma$-module with dimension vector $e_k$ provided $l(T_k)<n$;
\item[$\circ$] $\Hom_\cc(T,M)$ is the indecomposable $\Gamma$-module with dimension vector $2e_k$ provided $l(T_k)=n$. 
\end{itemize}
In either case, we have $\Hom_{\cc}(T,N)\cong \Hom_{\cc}(T,M)$ as $\Gamma$-modules, a contradiction.

Since $M,N\not\in \add \Sigma T$ and $M, N$ are exchange compatible by Theorem~\ref{t:exchange compatible}, we have
\[\dim_K\Hom_\cc(T_k, M)+\dim_K\Hom_\cc(T_k^*,M)=\max\{\dim_K\Hom_\cc(B,M),\dim_K\Hom_\cc(B',M)\}
\]
and
\[\dim_K\Hom_\cc(T_k, N)+\dim_K\Hom_\cc(T_k^*,N)=\max\{\dim_K\Hom_\cc(B,N),\dim_K\Hom_\cc(B',N)\}
\]
by Lemma~\ref{l:compatible}.
Note that $B,B'\in \add \overline{T}$ and $\dimv \Hom_\cc(T, M)=\dimv \Hom_\cc(T,N)$, we deduce that 
\[\dim_K\Hom_\cc(B,M)=\dim_K\Hom_\cc(B,N)~\text{and}~\dim_K\Hom_\cc(B',M)=\dim_K\Hom_\cc(B',N).
\]
Consequently, $\dim_K\Hom_\cc(T_k^*,M)=\dim_K\Hom_\cc(T_k^*,N)$, which implies that $\dimv \Hom_\cc(T', M)=\dimv\Hom_\cc(T',N)$, a contradiction. Therefore different indecomposable $\tau$-rigid $\Gamma$-modules have different dimension vectors.
\end{proof}

Let $T=\bigoplus\limits_{i=1}^nT_i$ be a basic maximal rigid object in $\cc$ and $\Gamma=\End_{\cc}(T)$ the endomorphism algebra of $T$. Let $Q_T$ be the Gabriel quiver of $\Gamma$ with vertex set $Q_0=\{1,2,\cdots,n\}$. 
 Denote by $f_1, \cdots, f_n$ the primitive idempotents of $\Gamma$ associated to the vertices. Following ~\cite{GLS14}, an $\Gamma$-module $M$ is {\it locally free} if for each $f_i$, $Mf_i$ is free as a right $f_i\Gamma f_i$-module. For a locally free $\Gamma$-module $M$, let $r(Mf_i)$ be the rank of $Mf_i$ as a free $f_i\Gamma f_i$-module. We call $\rankv~M:=(r(Mf_1),\cdots, r(Mf_n))^{\opname{tr}}\in \Z^n$
the {\it rank vector} of $M$. Recall that $Q_T$ admits a unique vertex, say $1$, such that there is a unique loop attached to the vertex $1$. In this case, if $M$ is a locally free $\Gamma$-module with $\dimv M=(m_1,\cdots, m_n)^{\opname{tr}}$, then $\rankv~M=(\frac{m_1}{2}, m_2,\cdots, m_n)^{\opname{tr}}$.

\begin{lemma}
Each indecomposable $\tau$-rigid $\Gamma$-module is locally free.
\end{lemma}
\begin{proof}
Let $T_1$ be the unique indecomposable direct summand of $T$ with length $l(T_1)=n$ and $P_1$ be the projective $\Gamma$-module corresponding to $T_1$. It follows from Theorem \ref{t:algbra-maximal-tube} and the construction of string modules \cite{ButlerRingel} of $\Gamma$ that an indecomposable $\Gamma$-module $M$ is locally free if and only if $\dim_K\Hom_\Gamma(P_1, M)=0$ or $2$.  Let $X\not\in \add \Sigma T$ be the indecomposable rigid object with $l(X)\leq n $ corresponding to $M$, \ie $M=FX$, where $F$ is the equivalence $
 \opname{pr} T/\add \Sigma T\xrightarrow{\sim} \mod \Gamma.
 $ Then in light of Lemma \ref{l:dim}, $\dim_K\Hom_\Gamma(P_1, M)=\dim_K\Hom_\cc(T_1, X)=0$ or $2$. 
 \end{proof}

We now prove that indecomposable $\tau$-rigid $\Gamma$-modules are determined by their rank vectors.
\begin{theorem}~\label{t:rank-vector-of-tau-rigid}
Let $T$ be a basic maximal rigid object of $\cc$ and $\Gamma=\End_\cc(T)$ the endomorphism algebra of $T$. Then different indecomposable $\tau$-rigid $\Gamma$-modules have different rank vectors.
\end{theorem}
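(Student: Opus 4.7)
The plan is to reduce the rank-vector statement to its dimension-vector analogue, then use mutation invariance to reduce further to a single distinguished base case, which is verified directly.

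First, I would translate the problem into one about dimension vectors. Each indecomposable $\tau$-rigid $\Gamma$-module is locally free, and by Theorem~\ref{t:algbra-maximal-tube} the quiver $Q_T$ has a unique loop, necessarily located at the vertex corresponding to the unique indecomposable summand of $T$ of length $n$ (Lemma~\ref{l:basic-property-cluster-tube}(3)). From the explicit formula $\rankv M = (m_1/2, m_2, \ldots, m_n)^{\opname{tr}}$ recalled just before the theorem, for any locally free $\Gamma$-module the rank vector and the dimension vector determine each other. Hence the theorem reduces to the statement that different indecomposable $\tau$-rigid $\Gamma$-modules have distinct dimension vectors.

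Next, I would invoke mutation invariance. Lemma~\ref{l:dimension-vector-vs-mutation} propagates the dimension-vector statement from $\Gamma' = \End_\cc(\mu_k T)$ to $\Gamma = \End_\cc(T)$; since mutation is an involution, the reverse direction follows by the same lemma, so the statement is a mutation-invariant property of basic maximal rigid objects. Because the mutation graph of basic maximal rigid objects of $\cc$ is connected --- a consequence of the cluster structure of Theorem~\ref{t:BMV-main-theorem} together with the connectedness of the exchange graph of a type $\mathrm{C}_n$ cluster algebra --- it suffices to verify the statement for a single, explicitly chosen, basic maximal rigid object $T_0$.

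For the base case, I would pick $T_0$ corresponding to a simple pair in the bijection of Lemma~\ref{l:basic-property-cluster-tube}(4), so that $Q_{T_0}$ and the relations on $\Gamma_0 = \End_\cc(T_0)$ can be written down explicitly via Theorem~\ref{t:algbra-maximal-tube}. The $n^2$ indecomposable $\tau$-rigid $\Gamma_0$-modules are precisely $F(X) = \Hom_\cc(T_0, X)$ where $X$ ranges over the indecomposable rigid objects of $\cc$ outside $\add \Sigma T_0$. Using Lemma~\ref{l:compute-morphism-cluster-tube} and the Hom-hammock (with Lemma~\ref{l:dim}(1) accounting for the length-$n$ doubling), each dimension vector $\dimv F(X)$ becomes an explicit combinatorial function of the parameters $(a,b)$ of $X$, and pairwise distinctness is checked by inspection.

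The main obstacle lies in this base case: one must carefully track the contribution of the unique length-$n$ summand, which produces doubled entries via Lemma~\ref{l:dim}(1) and is responsible for the loop in $Q_{T_0}$, and then verify that the Hom-hammock combinatorics separate all $n^2$ resulting vectors. The exchange compatibility proved in Theorem~\ref{t:exchange compatible} is essential, since it is precisely what makes Lemma~\ref{l:dimension-vector-vs-mutation} applicable and thus underpins the entire reduction strategy.
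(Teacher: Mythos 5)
Your skeleton is the same as the paper's: reduce rank vectors to dimension vectors via the explicit relation $\rankv M=(\tfrac{m_1}{2},m_2,\cdots,m_n)^{\opname{tr}}$, propagate the dimension-vector statement along mutations using Lemma~\ref{l:dimension-vector-vs-mutation} (whose validity indeed rests on the exchange compatibility of Theorem~\ref{t:exchange compatible}), and then settle one distinguished base case; the paper even chooses the same base object, namely $T'=(1,1)\oplus(1,2)\oplus\cdots\oplus(1,n)$, and gets connectivity of the mutation process from Lemma~3.3 of \cite{Yang12} rather than from the exchange graph of the type $\mathrm{C}_n$ cluster algebra, which is an inessential difference. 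Where you genuinely diverge is the base case itself: the paper does not compute anything there, but simply observes that $\End_\cc(T')$ is (up to isomorphism) the Geiss--Leclerc--Schr\"{o}er algebra attached to the type $\mathrm{C}_n$ Cartan matrix and quotes Theorem~1.3 of \cite{GLS14}, which says that indecomposable $\tau$-rigid (locally free rigid) modules over that algebra are determined by their rank vectors. Your alternative --- computing $\dim_K\Hom_\cc((1,i),(a,b))$ for all $i$ and all rigid $(a,b)\notin\add\Sigma T'$ via Lemma~\ref{l:compute-morphism-cluster-tube} and the Hom-hammock and checking that the resulting $n^2$ vectors are pairwise distinct --- is feasible and would make the argument self-contained, but be aware that this ``inspection'' is precisely where all the remaining work sits: you would need explicit piecewise formulas in $(a,b)$ and a genuine case analysis to separate the vectors, i.e.\ you would in effect be reproving the type $\mathrm{C}_n$ instance of the GLS result that the paper imports in one line. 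So: correct, same strategy, with a more computational (and currently only sketched) substitute for the paper's citation at the base case.
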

\begin{proof}
It suffices to prove that different indecomposable $\tau$-rigid $\Gamma$-modules have different dimension vectors.
Let $T_1$ be the unique indecomposable direct summand of $T$ with length $l(T_1)=n$. Without loss of generality, we may assume that $T_1=(1,n)$.  Denote by $T'=(1,1)\oplus (1,2)\oplus \cdots\oplus (1,n)$. It is clear that $T'$ is also a basic maximal rigid object of $\cc$. Moreover, $T$ can be obtained from $T'$ by a sequence of mutations ({\it cf.} Lemma 3.3 of~\cite{Yang12}). According to Theorem~1.3 of~\cite{GLS14}, we know that different indecomposable $\tau$-rigid $\End_\cc(T')$-modules  have different dimension vectors. Applying Lemma~\ref{l:dimension-vector-vs-mutation} repeatedly, we conclude that different indecomposable $\tau$-rigid $\Gamma$-modules have different dimension vectors.
\end{proof}

\section{The denominator theorem and its applications}
\subsection{The denominator theorem}~\label{ss:denominator-thm}
Let $T=\bigoplus\limits_{i=1}^n T_i$ be a basic maximal rigid object of $\cc$ with $l(T_1)=n$ and $\mathcal{A}_T=\mathcal{A}(B_T)$ the associated cluster algebra of type $\mathrm{C}_n$. Recall that we have a bijection $\mathbb{X}_?$ between the  indecomposable rigid objects of $\cc$ and the cluster variables of $\mathcal{A}_T$ by Theorem~\ref{t:BMV-main-theorem} such that $\Sigma T$ corresponds to the initial cluster of $\mathcal{A}_T$. In particular, $\mathbb{X}_{\Sigma T_i}=x_i$ for $i=1,\cdots, n$.
Let $\Gamma=\End_\cc(T)$ be the endomorphism algebra of $T$. Recall that $\operatorname{pr} T$ is the subcategory of $\cc$ consisting of objects which are finitely presented by $T$. The functor $F:=\Hom_\cc(T,?)$ yields an equivalence 
\[F: \operatorname{pr} T/\add \Sigma T\to \mod \Gamma
\]
 which maps a rigid object of $\cc$ to a locally free $\Gamma$-module.
The aim of this section is to prove that for each indecomposable rigid object $M\not\in \add \Sigma T$, the denominator vector of the  cluster variable $\mathbb{X}_M$ is the rank vector of  the locally free $\Gamma$-module $F(M)$.

 For an object $M\in \operatorname{pr} T$, we 
define a monomial $t_M:=\prod_{i=1}^nx_i^{d_i(M)}$, where 
\[d_i(M)=\begin{cases}\frac{\dim_K\Hom_\cc(T_i, M)}{2}& i=1;\\ \dim_K\Hom_\cc(T_i, M)& \text{else.}\end{cases}
\]

Let $f(x_1,\cdots, x_n)$ be a polynomial in variables $x_1,\cdots, x_n$ with integer coefficients. We say that $f(x_1,\cdots, x_n)$  satisfies the {\it positive condition}  if  $f(\epsilon_i)>0$ where $\epsilon_i:=(1,\cdots,1,0,1,\cdots 1)\in \Z^n$(with a $0$ in the $i$-th position) for $i=1,\cdots, n$.

Let $M$ be an indecomposable rigid object of $\cc$. Following~\cite{BMR}, the cluster variable $\mathbb{X}_M$ has a {\it $T$-denominator} if $\mathbb{X}_M=\begin{cases}\frac{f(x_1,\cdots, x_n)}{t_M}& M\not\in \add \Sigma T\\ x_i&M\cong \Sigma T_i\end{cases}$, where $f(x_1,\cdots, x_n)$  is a polynomial satisfying the positive condition. In this case, by the positive condition, we clearly know that the polynomial $f(x_1,\cdots, x_n)$ is not divisible by $x_i$ for each $i=1,\cdots,n$. In particular, the denominator vector $\opname{den} (\mathbb{X}_M)$ coincides with the rank vector $\opname{\underline{rank}} F(M)$.

\begin{lemma}~\label{l:exchange-dimension}
Let $(M,M^*)$ be  an exchange pair with exchange triangles
\[M\to B\to M^*\to \Sigma M~\text{and}~ M^*\to B'\to M\to \Sigma M^*.
\]
Let $N$ be an indecomposable rigid object of $\cc$ and suppose that either $M \cong \tau N$ or $M^*\cong \tau N$. Then we have 
\begin{eqnarray*}
&&\dim_K\Hom_\cc(N,M)+\dim_K\Hom_\cc(N,M^*)\\
&=&\begin{cases}
\max\{\dim_K\Hom_\cc(N,B),\dim_K\Hom_\cc(N,B')\}+1& l(N)<n;\\
\max\{\dim_K\Hom_\cc(N,B), \dim_K\Hom_\cc(N,B')\}+2& l(N)=n.
\end{cases}
\end{eqnarray*}
\end{lemma}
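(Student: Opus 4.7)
The plan is to reduce to the single case $M\cong\tau N$ by the symmetry that swaps $(M,B)\leftrightarrow(M^{*},B')$, under which the two exchange triangles are interchanged and the maximum on the right-hand side is invariant. So assume $N\cong\Sigma^{-1}M$. My strategy is then to evaluate each of the four Hom-spaces appearing in the statement separately; under this hypothesis all four collapse to easy pieces by the $2$-Calabi--Yau duality $\Hom_{\cc}(X,Y)\cong D\Hom_{\cc}(Y,\Sigma^{2}X)$ in $\cc$ together with the rigidity of the objects involved.

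First I would observe that $\Hom_{\cc}(N,M)=\Hom_{\cc}(N,\Sigma N)=\Ext^{1}_{\cc}(N,N)=0$ by rigidity of $N$. Next, let $\overline{T}$ be the almost complete basic maximal rigid object determined by the exchange pair $(M,M^{*})$, so that $B,B'\in\add\overline{T}$ and both $\overline{T}\oplus M$ and $\overline{T}\oplus M^{*}$ are maximal rigid. Then $2$-Calabi--Yau duality gives
\[\Hom_{\cc}(N,B)\cong D\Hom_{\cc}(B,\Sigma^{2}N)=D\Hom_{\cc}(B,\Sigma M)=D\Ext^{1}_{\cc}(B,M)=0,\]
and the same computation yields $\Hom_{\cc}(N,B')=0$. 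Hence the $\max\{\cdot,\cdot\}$ on the right-hand side of the claim is zero, and the identity reduces to showing that $\dim_{K}\Hom_{\cc}(N,M^{*})$ equals $1$ or $2$ according to $l(N)<n$ or $l(N)=n$.

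For this last term, another application of $2$-Calabi--Yau duality gives
\[\Hom_{\cc}(N,M^{*})=\Hom_{\cc}(\Sigma^{-1}M,M^{*})\cong D\Ext^{1}_{\cc}(M^{*},M),\]
which has the same dimension as $\Ext^{1}_{\cc}(M,M^{*})$. By Lemma~\ref{l:exchange triangle in cluster tube}, this dimension equals $2$ when $l(M)=l(M^{*})=n$ and equals $1$ when $l(M),l(M^{*})<n$. Since $\tau=\Sigma$ acts on indecomposables by $(a,b)\mapsto(a-1,b)$ and therefore preserves length, $l(N)=l(\Sigma^{-1}M)=l(M)$. This produces exactly the dichotomy $l(N)=n$ versus $l(N)<n$ appearing in the statement, and combining with the three vanishings above yields the identity.

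I do not anticipate a serious obstacle: the whole argument is essentially bookkeeping with $2$-Calabi--Yau duality, the rigidity of $N$ and of $\overline{T}\oplus M$, and the dimension computation already recorded in Lemma~\ref{l:exchange triangle in cluster tube}. The only subtle points are the symmetric reduction to $M\cong\tau N$ at the start, and the matching of the ``$l(N)<n$ vs.\ $l(N)=n$'' dichotomy in the conclusion with the corresponding dichotomy for $M$ via $l(N)=l(M)$.
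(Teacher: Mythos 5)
Your proposal is correct and follows essentially the same argument as the paper: rigidity plus the $2$-Calabi--Yau duality kill $\Hom_\cc(N,M)$ (resp. $\Hom_\cc(N,M^*)$ in the paper's choice of case), $\Hom_\cc(N,B)$ and $\Hom_\cc(N,B')$, and the remaining term is identified with $\Ext^1_\cc(M,M^*)$, whose dimension and length dichotomy come from Lemma~\ref{l:exchange triangle in cluster tube}. The only cosmetic difference is that you treat the case $M\cong\tau N$ while the paper treats $M^*\cong\tau N$, the two being interchangeable by the symmetry you describe.
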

\begin{proof} Assume that $M^*\cong \tau N$ (the other case is similar) and we can rewrite the  exchange triangles as
\[M\ra B\ra \tau N\ra\Sigma M~\text{ and}~ \tau N\ra B'\ra M\ra\Sigma N.
\]
  Note that  $\Hom_\cc(N,M^*)=0$, we have 
  \[\dim_K\Hom_\cc(N,M) + \dim_K \Hom_\cc(N,M^*)=\dim_K \Hom_\cc(N,M) =\dim_K D\Ext^1_{\cc}(M,\tau N).
  \]
    By Lemma \ref{l:exchange triangle in cluster tube}, 
    we know that $\dim_K\Ext^1_\cc(M,M^*)=\begin{cases}1& l(M)<n;\\ 2&l(M)=n.\end{cases}$.
    Consequently, 
    \[\dim_K \Hom_\cc(N,M) + \dim_K \Hom_\cc(N,M^*) = \begin{cases}1&l(N)<n;\\2&l(N)=n.\end{cases}
    \] 
    On the other hand, 
    \[\Hom_\cc(N,B')\cong \Hom_\cc(M^*,\tau B')\cong D\Ext^1_{\cc}(B', M^*)=0,\]
    for the reason that $B'\oplus M^*$ is a direct summand of a maximal rigid object. Similarly $\Hom_\cc(N,B)=0$ and we are done.
\end{proof}

The following lemma is an analogue of Proposition 3.1 of ~\cite{BMR}. 
\begin{lemma}~\label{l:t-denominator}
Let $M=\overline{M}\oplus M_k$ be a basic maximal rigid object of $\cc$ with an indecomposable direct summand $M_k$. Let $\mu_{M_k}(M)=\overline{M}\oplus M_k^*$ be the mutation of $M$ at  $M_k$.  If for each indecomposable direct summand $N$ of $M$, the cluster variable $\mathbb{X}_{N}$ has  a $T$-denominator, then the cluster variable $\mathbb{X}_{M_k^*}$ also has a $T$-denominator.
\end{lemma}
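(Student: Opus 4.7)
The plan is to use the exchange relation in the cluster algebra $\mathcal{A}_T$ coming from the mutation at $M_k$ and combine it with the exchange compatibility of the objects $T_1,\ldots,T_n$ (Theorem~\ref{t:exchange compatible}) to identify the denominator of $\mathbb{X}_{M_k^*}$ as $t_{M_k^*}$. Writing the exchange triangles as
\[M_k^* \to B_+ \to M_k \to \Sigma M_k^*, \qquad M_k \to B_- \to M_k^* \to \Sigma M_k\]
with $B_\pm \in \add\overline{M}$, Theorem~\ref{t:BMV-main-theorem} gives the exchange relation $\mathbb{X}_{M_k}\mathbb{X}_{M_k^*} = \mathbb{X}_{B_+} + \mathbb{X}_{B_-}$ in $\mathcal{A}_T$. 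The inductive hypothesis lets me write $\mathbb{X}_{B_\pm} = f_{B_\pm}/t_{B_\pm}$ with $f_{B_\pm}$ a product of polynomials satisfying the positive condition, using additivity of $\dim\Hom_\cc(T_j,-)$ and of $d_j$ on direct sums.

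Next I apply Theorem~\ref{t:exchange compatible} to each $T_j$ and the exchange pair $(M_k,M_k^*)$. In the generic case where neither $M_k$ nor $M_k^*$ is isomorphic to $\Sigma T_j$, this yields the tropical identity
\[d_j(M_k)+d_j(M_k^*)=\max\{d_j(B_+),d_j(B_-)\},\]
from which $t_{M_k}\cdot t_{M_k^*}=\lcm(t_{B_+},t_{B_-})$. In the exceptional case $M_k\cong \Sigma T_l$ (equivalently $M_k\cong \tau T_l$ since $\tau=\Sigma$ in $\cc$), Lemma~\ref{l:exchange-dimension} replaces the identity with a $+1$ correction on the right (the $+2$ at $l=1$ is absorbed by the factor $\tfrac12$ in the definition of $d_1$), giving $t_{M_k^*}=x_l\cdot \lcm(t_{B_+},t_{B_-})$; here $t_{M_k}=1$ by rigidity of $T$, consistent with $\mathbb{X}_{M_k}=x_l$. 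The symmetric case $M_k^*\cong \Sigma T_{l'}$ yields $\mathbb{X}_{M_k^*}=x_{l'}$ by definition of the bijection $\mathbb{X}_?$.

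Putting these ingredients together, set $g=\gcd(t_{B_+},t_{B_-})$, $u=t_{B_+}/g$, $v=t_{B_-}/g$. Dividing the exchange relation by $\mathbb{X}_{M_k}$ and applying the identity $\lcm(t_{B_+},t_{B_-})=t_{M_k}\cdot t_{M_k^*}$ (modulo the monomial correction in the boundary case) simplifies to
\[\mathbb{X}_{M_k^*}=\frac{f_{B_+}v+f_{B_-}u}{f_{M_k}\cdot t_{M_k^*}},\]
with the convention $f_{M_k}=1$ in the boundary case. The Laurent phenomenon forces $\mathbb{X}_{M_k^*}\in \Z[x_1^{\pm 1},\ldots,x_n^{\pm 1}]$; since $f_{M_k}$ has no monomial factor (the positive condition gives $f_{M_k}(\epsilon_i)>0$, hence $x_i\nmid f_{M_k}$), unique factorization in $\Z[x_1,\ldots,x_n]$ makes $f_{M_k}$ divide $f_{B_+}v+f_{B_-}u$. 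Writing $h$ for the quotient, I get $\mathbb{X}_{M_k^*}=h/t_{M_k^*}$, and positivity $h(\epsilon_i)>0$ follows from $f_{M_k}(\epsilon_i)\cdot h(\epsilon_i)=f_{B_+}(\epsilon_i)v(\epsilon_i)+f_{B_-}(\epsilon_i)u(\epsilon_i)>0$, using $f_{B_\pm}(\epsilon_i)>0$ by hypothesis and the coprimality $\gcd(u,v)=1$ to guarantee at least one of $u(\epsilon_i),v(\epsilon_i)$ is strictly positive.

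The principal technical obstacle is the boundary situation: when $M_k$ or $M_k^*$ lies in $\add\Sigma T$, the clean exchange-compatibility identity breaks down and must be rescued by the corrected formula of Lemma~\ref{l:exchange-dimension}, with careful bookkeeping to reconcile the extra monomial factor with the convention $\mathbb{X}_{\Sigma T_j}=x_j$ and $t_{\Sigma T_j}=1$ (the latter ensured by rigidity of $T$). Once this is handled, both the divisibility of $f_{M_k}$ into the numerator and the verification of the positive condition are essentially formal consequences of the Laurent phenomenon and the combinatorics of monomials.
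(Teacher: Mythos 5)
Your overall skeleton is the same as the paper's (exchange relation from Theorem~\ref{t:BMV-main-theorem}, exchange compatibility of each $T_j$ to get $t_{M_k}t_{M_k^*}=\lcm(t_{B_+},t_{B_-})$, Lemma~\ref{l:exchange-dimension} for the case $M_k\cong\Sigma T_l$, then Laurent phenomenon plus evaluation at the points $\epsilon_i$), but there is a genuine gap in the positivity step, and it sits exactly where the paper has to work hardest: the direct summands of $B_{\pm}$ that lie in $\add\Sigma T$. Your claim that ``$f_{B_\pm}$ is a product of polynomials satisfying the positive condition, hence $f_{B_\pm}(\epsilon_i)>0$'' is false in general: if $\Sigma T_j$ is a direct summand of $B_+$, then $\mathbb{X}_{\Sigma T_j}=x_j$ contributes the monomial factor $x_j$ to your numerator $f_{B_+}$, and $x_j$ vanishes at $\epsilon_j$ (it does not satisfy the positive condition). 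Consequently the inequality $f_{B_+}(\epsilon_i)v(\epsilon_i)+f_{B_-}(\epsilon_i)u(\epsilon_i)>0$ is not justified by $\gcd(u,v)=1$ alone: one must also rule out, for each $i$, the simultaneous vanishing caused by these hidden monomial factors, namely (a) $\Sigma T_i$ a common summand of $B_+$ and $B_-$, and (b) $\Sigma T_i$ a summand of one of $B_{\pm}$ while $x_i$ divides the complementary cofactor $u$ or $v$.

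This is precisely why the paper splits $B=B_0\oplus B_1$ and $B'=B_0'\oplus B_1'$ with $B_1,B_1'\in\add\Sigma T$, keeps the monomials $\mathbb{X}_{B_1},\mathbb{X}_{B_1'}$ separate from the genuine polynomial parts $f_{B_0},f_{B_0'}$, and proves that $m\mathbb{X}_{B_1}$ and $m'\mathbb{X}_{B_1'}$ are coprime. The cross-coprimality in (b) is not a formal gcd manipulation: it uses that $B_{\pm}$ are summands of the (maximal) rigid object $M$ together with the $2$-Calabi--Yau duality $\Hom_\cc(X,\Sigma^2 T_i)\cong D\Hom_\cc(T_i,X)$, so that an indecomposable summand $X$ with $\Hom_\cc(T_i,X)\neq 0$ cannot coexist with the summand $\Sigma T_i$ inside $B_{\pm}$. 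Your proposal never invokes this categorical input (nor the fact that $B_+$ and $B_-$ have no common summand), so as written the positivity of the numerator at $\epsilon_i$, and hence both the polynomiality argument and the verification of the positive condition for the new numerator, are unproved in the case where $B_{\pm}$ meets $\add\Sigma T$. The rest of your argument (the identity $t_{M_k}t_{M_k^*}=\lcm(t_{B_+},t_{B_-})$, the $x_l$-correction via Lemma~\ref{l:exchange-dimension} with the factor $\tfrac12$ at $l=1$, and the divisibility of the numerator by $f_{M_k}$ via the Laurent phenomenon and unique factorization) is sound and matches the paper.
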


\begin{proof}
Let us rewrite $M=\bigoplus\limits_{i=1}^nM_i$, where $M_1,\cdots, M_n$ are indecomposable objects.
By the assumption, for each indecomposable direct summand $M_i$ such that $M\not\in \Sigma T$, we have $\mathbb{X}_{M_i}=\frac{f_{M_i}}{t_{M_i}}$, where $f_{M_i}$ is a polynomial in $x_1,\cdots, x_n$ satisfying the positive condition. We also set $f_{\Sigma T_j}=1$ for $1\leq j\leq n$. For an object $N$ with decomposition $N=N_1\oplus \cdots\oplus N_s$ of indecomposable direct summands, we set
\[\mathbb{X}_N=\mathbb{X}_{N_1}\mathbb{X}_{N_2}\cdots\mathbb{X}_{N_s}.
\]
Moreover, if $N\in \add M$, we also write
\[f_N=f_{N_1}f_{N_2}\cdots f_{N_s}.
\]

Let $M_k^*\to B\to M_k\to \Sigma M_k^*$ and $M_k\to B'\to M_k^*\to \Sigma M_k$ be the exchange triangles associated to the exchange pair $(M_k,M_k^*)$. In particular, $B,B'\in \add \overline{M}$.
We may rewrite $B=B_0\oplus B_1$ and $B'=B_0'\oplus B_1'$ such that $B_1,B_1'\in \add \Sigma T$ and $B_0, B_0'$ do not have indecomposable direct summands in $\add \Sigma T$. Consequently, $t_B=t_{B_0}$ and $t_{B'}=t_{B_0'}$.

  By definition of the skew-symmetrizable matrix $B_M$ and the exchange relation in the cluster algebra $\mathcal{A}_T$, we have
\[\mathbb{X}_{M_k}\mathbb{X}_{M_k^*}=\mathbb{X}_B+\mathbb{X}_{B'}.
\]
If $M_k^*\cong \Sigma T_i$ for some $i$, then we have $\mathbb{X}_{M_k^*}=x_i$ by Theorem~\ref{t:BMV-main-theorem}.
We separate the remaining proof into two cases.

\noindent{\bf Case 1:} Neither $M_k$ nor $M_k^*$ belongs to $\add\Sigma T$. By the assumption, we have  \[\mathbb{X}_B=\frac{f_B\mathbb{X}_{B_1}}{t_{B_0}}~\text{and}~\mathbb{X}_{B'}=\frac{f_{B'}\mathbb{X}_{B_1'}}{t_{B_0'}}.
\]
Set $m=\frac{\lcm(t_{B_0}, t_{B_0'})}{t_{B_0}}$ and $m'=\frac{\lcm(t_{B_0},t_{B_0'})}{t_{B_0'}}$, where $\lcm(t_{B_0}, t_{B_0'})$ is the least common multiple of $t_{B_0}$ and $t_{B_0'}$. Using the exchange relation, we have
\[\mathbb{X}_{M_k^*}=\frac{f_B\mathbb{X}_{B_1}/{t_{B_0}}+f_{B'}\mathbb{X}_{B_1'}/{t_{B_0'}}}{f_{M_k}/{t_{M_k}}}=\frac{(f_Bm\mathbb{X}_{B_1}+f_{B'}m'\mathbb{X}_{B_1'})/f_{M_k}}{\lcm(t_B,t_B')/t_{M_k}}.
\]
By Theorem~\ref{t:exchange compatible}, each indecomposable direct summand $T_i$ of $T$ is compatible with $(M_k, M_k^*)$. Therefore
\begin{eqnarray*}
t_{M_k}t_{M_k^*}&=&x_1^{\frac{\dim_K\Hom_\cc(T_1,M_k)}{2}+\frac{\dim_K\Hom_\cc(T_1, M_k^*)}{2}}\prod_{i=2}^nx_i^{\dim_K\Hom_\cc(T_i, M)+\dim_K\Hom_\cc(T_i, M_k^*)}\\
&=& x_1^{\max\{\frac{\dim_K\Hom_\cc(T_1,B)}{2}, \frac{\dim_K\Hom_\cc(T_1,B')}{2}\}}\prod_{i=2}^nx_i^{\max\{\dim_K\Hom_\cc(T_i,B), \dim_K\Hom_\cc(T_i,M_k^*)\}}\\
&=&\lcm(t_B,t_B').
\end{eqnarray*}
Consequently, 
\[\mathbb{X}_{M_k^*}=\frac{(f_Bm\mathbb{X}_{B_1}+f_{B'}m'\mathbb{X}_{B_1'})/f_{M_k}}{t_{M_k^*}}.
\]
 Since $B$ and $B'$ have no common factors, we know that $\mathbb{X}_{B_1}$ and $\mathbb{X}_{B_1'}$ are coprime. Suppose that $m$ and $\mathbb{X}_{B_1'}$ have a common factor $x_i$. By definition of $m$, there is a direct summand $X$ of $B_0'$ such that $\Hom_\cc(T_i, X)\neq 0$. On the other hand, $\Sigma T_i$ is a direct summand of $B_1'$. Consequently, $X$ and $\Sigma T_i$ are direct summands of the  basic maximal rigid object $M$. However, 
\[\Hom_\cc(X, \Sigma^2T_i)\cong D\Hom_\cc(T_i, X)\neq 0,
\]
which is a contradiction. Hence $m$ and $\mathbb{X}_{B_1'}$ are coprime. Similarly,  $m'$ and  $\mathbb{X}_{B_1}$ are also coprime. It follows that $m\mathbb{X}_{B_1}$ and  $m'\mathbb{X}_{B_1'}$ are coprime.

By the Laurent phenomenon of cluster variables, we deduce that $\frac{f_Bm\mathbb{X}_{B_1}+f_{B'}m'\mathbb{X}_{B_1'}}{f_{M_k}}$ is a Laurent polynomial in variables $x_1,\cdots, x_n$. It remains to show that $\frac{f_Bm\mathbb{X}_{B_1}+f_{B'}m'\mathbb{X}_{B_1'}}{f_{M_k}}$ is a polynomial satisfying the positive condition.
For each $\epsilon_i$, it follows from the assumption that $f_B(\epsilon_i)>0$ and $f_{B'}(\epsilon_i)>0$. On the other hand, we clearly have $(m\mathbb{X}_{B_1})(\epsilon_i)\geq 0$ and $(m'\mathbb{X}_{B_1'})(\epsilon_i)\geq 0$. Since $m\mathbb{X}_{B_1}$ and  $m'\mathbb{X}_{B_1'}$ are coprime, these two numbers can not be simultaneously zero. Therefore
\[(f_Bm\mathbb{X}_{B_1}+f_{B'}m'\mathbb{X}_{B_1'})(\epsilon_i)>0~\text{for $i=1,\cdots, n$.}
\]
Note that the polynomial $f_{M_k}$ also satisfies the positive condition and we have
\[\frac{f_Bm\mathbb{X}_{B_1}+f_{B'}m'\mathbb{X}_{B_1'}}{f_{M_k}}(\epsilon_i)>0 ~\text{for $i=1,\cdots, n$.}
\]
In particular, $\frac{f_Bm\mathbb{X}_{B_1}+f_{B'}m'\mathbb{X}_{B_1'}}{f_{M_k}}$ is defined for each $\epsilon_i$, which implies that $\frac{f_Bm\mathbb{X}_{B_1}+f_{B'}m'\mathbb{X}_{B_1'}}{f_{M_k}}$ is a polynomial in $x_1,\cdots, x_n$ and hence satisfies the positive condition.

\noindent{\bf Case 2:}  Suppose that $M_k\cong \Sigma T_l$ for some $l$. It is not hard to see that $M_k^*\not\in \add \Sigma T$. By definition, we have $t_{M_k}=1$. According to Lemma~\ref{l:exchange-dimension}, we have 
\begin{eqnarray*}
&&\dim_K\Hom_\cc(T_l, M_k)+\dim_K\Hom_\cc(T_l, M_k^*)\\
&=&\begin{cases}\max\{\dim_K\Hom_\cc(T_l, B),\dim_K\Hom_\cc(T_l, B')\}+1&l\neq 1;\\ \max\{\dim_K\Hom_\cc(T_l, B),\dim_K\Hom_\cc(T_l, B')\}+2&l=1.
\end{cases}
\end{eqnarray*}
On the other hand, for each $j\neq l$,  $T_j$ is compatible with the exchange pair $(M_k,M_k^*)$ by Theorem~\ref{t:exchange compatible}. Therefore, for $j\neq l$, we also obtain
\[\dim_K\Hom_\cc(T_j, M_k)+\dim_K\Hom_\cc(T_j,M_k^*)=\max\{\dim_K\Hom_\cc(T_j, B), \dim_K\Hom_\cc(T_j, B')\}.
\]
Putting all of these together, we have
\begin{eqnarray*}
t_{M_k^*}&=&t_{M_k}t_{M_k^*}\\
&=&x_1^{\frac{\dim_K\Hom_\cc(T_1,M_k)}{2}}\prod_{i=2}^n x_i^{\dim_K\Hom_\cc(T_i, M_k)}x_1^{\frac{\dim_K\Hom_\cc(T_1,M_k^*)}{2}}\prod_{i=2}^n x_i^{\dim_K\Hom_\cc(T_i, M_k^*)}\\
&=&\begin{cases}
x_1x_1^{\max\{\frac{\dim_K\Hom_\cc(T_1,B)}{2},\frac{\dim_K\Hom_\cc(T_1,B')}{2}\}}\prod_{i=2}^nx_i^{\max\{\dim_K\Hom_\cc(T_i, B), \dim_K\Hom_\cc(T_i, B')\}}&l=1\\
x_lx_1^{\max\{\frac{\dim_K\Hom_\cc(T_1,B)}{2},\frac{\dim_K\Hom_\cc(T_1,B')}{2}\}}\prod_{i=2}^nx_i^{\max\{\dim_K\Hom_\cc(T_i, B), \dim_K\Hom_\cc(T_i, B')\}}&l\neq 1
\end{cases}\\
&=&x_l\lcm(t_B, t_{B'}).
\end{eqnarray*}
Note that $\mathbb{X}_{M_k}=x_l$.
By the exchange relation, we have
\[\mathbb{X}_{M_k^*}=\frac{\frac{f_B\mathbb{X}_{B_1}}{t_{B_0}}+\frac{f_{B'}\mathbb{X}_{B'}}{t_{B_0'}}}{x_l}=\frac{f_Bm\mathbb{X}_{B_1}+f_{B'}m'\mathbb{X}_{B_1'}}{t_{M_k^*}},
\]
where $m=\frac{\lcm(t_{B_0}, t_{B_0'})}{t_{B_0}}$ and $m'=\frac{\lcm(t_{B_0},t_{B_0'})}{t_{B_0'}}$. Now similar to the Case $1$, one can show that $f_Bm\mathbb{X}_{B_1}+f_{B'}m'\mathbb{X}_{B_1'}$ is a polynomial satisfying the positive condition and we are done.
\end{proof}
Now we are in a position to state the main result of this subsection.
\begin{theorem}~\label{t:denominator-thm}
 Every cluster variable of $\mathcal{A}_T$ has a $T$-denominator. In particular, for each indecomposable rigid object $M\not\in \add \Sigma T$, we have
 \[\opname{den}(\mathbb{X}_M)=\opname{\underline{rank}} F(M).
 \]
 \end{theorem}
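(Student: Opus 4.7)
The plan is to prove the first statement by induction along the tree $\mathbb{T}_n$, and then deduce the denominator formula as a direct consequence of the definition of a $T$-denominator.

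For the base case, recall from Theorem~\ref{t:BMV-main-theorem} that the initial seed of $\mathcal{A}_T$ corresponds to the basic maximal rigid object $\Sigma T$, so $\mathbb{X}_{\Sigma T_i} = x_i$ for $i = 1,\dots,n$. By definition each initial cluster variable has a $T$-denominator (this is the $M \cong \Sigma T_i$ clause), with trivial polynomial $f = 1$ satisfying the positive condition. Thus for the initial basic maximal rigid object $\Sigma T = \bigoplus_i \Sigma T_i$, every indecomposable direct summand gives rise to a cluster variable with a $T$-denominator.

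Next, I propagate this property through mutations. Given any basic maximal rigid object $M' = \overline{M} \oplus M_k^*$ reached by mutating $M = \overline{M} \oplus M_k$ at $M_k$, Lemma~\ref{l:t-denominator} states precisely that if every cluster variable $\mathbb{X}_{M_i}$ associated to an indecomposable direct summand of $M$ has a $T$-denominator, then so does $\mathbb{X}_{M_k^*}$. Combined with Theorem~\ref{t:BMV-main-theorem}, which identifies mutation of basic maximal rigid objects in $\cc$ with cluster mutation in $\mathcal{A}_T$ and ensures that every indecomposable rigid object is a summand of some basic maximal rigid object reachable from $\Sigma T$ by iterated mutation, a straightforward induction on the distance in $\mathbb{T}_n$ from the initial seed shows that every cluster variable of $\mathcal{A}_T$ has a $T$-denominator.

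Finally, I deduce the denominator formula. Let $M \not\in \add \Sigma T$ be an indecomposable rigid object. By the first part, $\mathbb{X}_M = f(x_1,\dots,x_n)/t_M$, where $f$ satisfies the positive condition. The positive condition forces $f(\epsilon_i) > 0$ for each $i$, so $f$ is not divisible by any $x_i$, and hence $\opname{den}(\mathbb{X}_M)$ is exactly the exponent vector of $t_M$, namely $(d_1(M),\dots,d_n(M))^{\opname{tr}}$. It remains to compare this with $\rankv F(M)$. By Theorem~\ref{t:algbra-maximal-tube}, the Gabriel quiver of $\Gamma$ has a unique loop $\rho$ at the vertex $1$ (corresponding to the unique length-$n$ summand $T_1$) with $\rho^2 = 0$, so $f_1\Gamma f_1 \cong K[\rho]/(\rho^2)$ has dimension $2$, while $f_i\Gamma f_i \cong K$ for $i \neq 1$. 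Since $F(M) f_i = \Hom_\cc(T_i, M)$, the local freeness of $F(M)$ gives $r(F(M)f_1) = \frac{1}{2}\dim_K \Hom_\cc(T_1, M)$ and $r(F(M)f_i) = \dim_K \Hom_\cc(T_i, M)$ for $i \neq 1$, matching the definition of $d_i(M)$ exactly.

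The main obstacle in this theorem is really the propagation lemma (Lemma~\ref{l:t-denominator}), whose proof depends on exchange compatibility (Theorem~\ref{t:exchange compatible}); granting these, the denominator theorem itself reduces to a clean induction plus a bookkeeping check relating $d_i(M)$ to $\rankv F(M)$ via the structure of $\Gamma$.
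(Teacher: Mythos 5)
Your proposal is correct and follows essentially the same route as the paper: establish the $T$-denominator for the initial cluster $\Sigma T$, propagate it to all cluster variables by induction along mutations via Lemma~\ref{l:t-denominator} together with Theorem~\ref{t:BMV-main-theorem}, and then read off $\opname{den}(\mathbb{X}_M)=\rankv F(M)$ from the positive condition. Your final bookkeeping identifying $d_i(M)$ with the rank-vector entries via $f_1\Gamma f_1\cong K[\rho]/(\rho^2)$ is just an expanded version of the remark the paper makes right after defining $T$-denominators, so there is no substantive difference.
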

\begin{proof}
Recall that $\Sigma T$ corresponds to the initial cluster under the bijection $\mathbb{X}_?$ and each indecomposable direct summand $\Sigma T_i$ of $\Sigma T$ has a $T$-denominator. On the other hand, each indecomposable rigid object $M$ is a direct summand of a basic maximal rigid object $T'$. Moreover, $T'$ can be obtained from $\Sigma T$ by a finite sequence of mutations. Now the result follows from Lemma~\ref{l:t-denominator}.
\end{proof}

As a direct consequence of the denominator theorem and Theorem~\ref{t:rank-vector-of-tau-rigid}, we obtain the following. 
\begin{corollary}~\label{c:different-denominator}
Let $\ca$ be a cluster algebra of type $\mathrm{C}$. Different cluster variables of $\mathcal{A}$ have different denominator vectors with respect to any given cluster.
\end{corollary}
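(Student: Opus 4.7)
The strategy is to combine the two main inputs already established: the denominator theorem (Theorem~\ref{t:denominator-thm}), which identifies denominator vectors with rank vectors of $\tau$-rigid modules, together with Theorem~\ref{t:rank-vector-of-tau-rigid}, which asserts that those rank vectors separate indecomposable $\tau$-rigid modules.

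First I would reduce the statement to the case where the chosen cluster is the initial one. Given any cluster $\mathrm{y}=\{y_1,\dots,y_n\}$ of $\mathcal{A}$, Theorem~\ref{t:BMV-main-theorem} produces a basic maximal rigid object $U\in\cc$ corresponding to $\mathrm{y}$ under the bijection $\mathbb{X}_?$. Setting $T:=\Sigma^{-1}U$ so that $\Sigma T=U$, we may realize $\mathcal{A}$ as $\mathcal{A}_T$ with $\mathrm{y}$ as its initial cluster; in particular $\mathbb{X}_{\Sigma T_i}=y_i$ for each $i$. The denominator vectors of $\mathcal{A}$ with respect to $\mathrm{y}$ then coincide with the denominator vectors of $\mathcal{A}_T$ with respect to its initial seed, so it suffices to show that all cluster variables of $\mathcal{A}_T$ have distinct denominator vectors.

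Next, I would split the cluster variables into the initial and the non-initial ones. For an initial variable $y_i=\mathbb{X}_{\Sigma T_i}$ one has $\opname{den}(y_i)=-e_i$, so the $n$ initial denominator vectors are pairwise distinct and each carries a strictly negative entry. For a non-initial cluster variable, that is one of the form $\mathbb{X}_M$ with $M\not\in\add\Sigma T$ an indecomposable rigid object, Theorem~\ref{t:denominator-thm} provides the identification $\opname{den}(\mathbb{X}_M)=\opname{\underline{rank}}F(M)$. Since $F(M)$ is a non-zero locally free $\Gamma$-module, its rank vector is non-zero with non-negative entries, and therefore differs from every $-e_i$; this cleanly separates the two classes.

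Finally, the equivalence $F\colon\opname{pr}T/\add\Sigma T\xrightarrow{\sim}\mod\Gamma$ restricts to a bijection between indecomposable rigid objects of $\cc\setminus\add\Sigma T$ and indecomposable $\tau$-rigid $\Gamma$-modules. Transporting Theorem~\ref{t:rank-vector-of-tau-rigid} through this bijection, the vectors $\opname{\underline{rank}}F(M)$ attached to pairwise non-isomorphic $M\not\in\add\Sigma T$ are pairwise distinct, so the non-initial cluster variables of $\mathcal{A}_T$ have pairwise distinct denominator vectors. Combined with the separation step above, this yields the claim. The whole argument is essentially bookkeeping on top of the two cited theorems; the only point that deserves explicit verification is the sign observation that rank vectors of non-zero locally free modules cannot coincide with any $-e_i$, which is what partitions the problem into the two clean cases.
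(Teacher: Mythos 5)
Your proposal is correct and follows exactly the paper's route: the corollary is stated there as a direct consequence of Theorem~\ref{t:denominator-thm} together with Theorem~\ref{t:rank-vector-of-tau-rigid}, and your write-up simply makes explicit the same bookkeeping (reduction to an initial cluster $\Sigma T$ via Theorem~\ref{t:BMV-main-theorem}, the convention $\opname{den}(x_i)=-e_i$ separating initial from non-initial variables, and injectivity of rank vectors on indecomposable $\tau$-rigid modules transported through the equivalence $F$).
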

\subsection{Application to conjectures on denominator vectors}~\label{ss:denominator-thm-app}
In this subsection, we derive certain consequences of the denominator theorem. As a first application, we prove that Conjecture 7.4 of~\cite{FominZelevinsky07} holds true for cluster algebras of type $\mathrm{C}$. Namely,
\begin{theorem}~\label{t:d-conjecture}
Let $\mathcal{A}$ be a cluster algebra of type $\mathrm{C}_n$ with an initial cluster $\mathrm{x}_0=\{x_1,\cdots, x_n\}$. Let $x$ be a non-initial cluster variable with denominator vector $\opname{den}(x)=(d_1,\cdots, d_n)^{\opname{tr}}\in\Z^n$.  Then
\begin{itemize}
\item[(1)] All components $d_i$ are non negative;
\item[(2)] We have $d_i=0$ if and only if there is a cluster containing both $x$ and $x_i$;
\item[(3)] Each component $d_i$ depends only on the cluster variables $x$ and $x_i$.
\end{itemize}
\end{theorem}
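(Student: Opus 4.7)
The plan is to translate all three claims into statements about the indecomposable rigid object $M\in \cc$ with $x=\mathbb{X}_M$ and the indecomposable summands $T_1,\ldots,T_n$ of the basic maximal rigid object $T$ corresponding to the cluster $\{x_1,\ldots,x_n\}$. Combining the denominator theorem (Theorem~\ref{t:denominator-thm}) with the definition of the rank vector of $F(M)=\Hom_\cc(T,M)$, the $i$-th component of $\opname{den}(x)$ is
\[
d_i \;=\; \frac{\dim_K \Hom_\cc(T_i,M)}{\dim_K \End_\cc(T_i)},
\]
since $F(M)f_i=\Hom_\cc(T_i,M)$ and $f_i\Gamma f_i=\End_\cc(T_i)$. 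Part (1) is then immediate, as $d_i$ is a ratio of non-negative integers (in fact an integer, by local freeness of $F(M)$).

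For part (2), I would establish the chain of equivalences $d_i=0 \Leftrightarrow \Hom_\cc(T_i,M)=0 \Leftrightarrow \Ext^1_\cc(\Sigma T_i,M)=0$, where the second uses that $\Sigma$ is an autoequivalence so $\Ext^1_\cc(\Sigma T_i,M)=\Hom_\cc(\Sigma T_i,\Sigma M)=\Hom_\cc(T_i,M)$. By the $2$-Calabi--Yau property of $\cc$, $\Ext^1_\cc(\Sigma T_i,M)=0$ is equivalent to $\Ext^1_\cc(M,\Sigma T_i)=0$; together with the individual rigidity of $M$ and $\Sigma T_i$ and the fact that $M\not\cong \Sigma T_i$ (because $M$ is non-initial), this says that $\Sigma T_i\oplus M$ is rigid. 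By Lemma~\ref{l:basic-property-cluster-tube} any rigid object can be completed to a basic maximal rigid object, and under the bijection $\mathbb{X}_?$ of Theorem~\ref{t:BMV-main-theorem} this translates precisely into the existence of a cluster containing both $x_i$ and $x$. The converse direction is automatic: a common cluster gives a maximal rigid object having $\Sigma T_i$ and $M$ as summands, forcing $\Ext^1_\cc(\Sigma T_i,M)=0$.

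For part (3), note that the formula for $d_i$ involves only the indecomposable rigid objects $T_i$ and $M$, which are determined (independently of the rest of $T$) by the cluster variables $x_i$ and $x$ via the bijection $\mathbb{X}_?$; hence $d_i$ depends only on $x$ and $x_i$. The main obstacle is confirming that the object $T_i$ paired with $x_i$ is really invariant under change of initial seed, so that the formula is seed-independent; this is a consequence of the compatibility of $\mathbb{X}_?$ with mutations stated in Theorem~\ref{t:BMV-main-theorem}, but it requires a small verification that mutating the initial seed transports the bijection coherently, so that the indecomposable rigid object attached to a given cluster variable is well defined.
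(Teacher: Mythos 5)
Your proposal is correct and, for parts (1) and (3), follows the paper's own argument almost verbatim: Theorem~\ref{t:denominator-thm} gives $d_i=\dim_K\Hom_\cc(T_i,M)/\dim_K\End_\cc(T_i)$ (the denominator being $2$ exactly for the summand of length $n$ and $1$ otherwise), whence non-negativity and the fact that $d_i$ is determined by the pair $(T_i,M)$, hence by $(x_i,x)$. The coherence issue you raise at the end of (3) --- that the indecomposable rigid object attached to a given cluster variable should not depend on which seed is used to set up the bijection $\mathbb{X}_?$ --- is treated at exactly the same level of detail in the paper, which simply reduces to $\mathcal{A}=\mathcal{A}_T$ and concludes from the formula; so this is not a gap relative to the paper's proof. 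Where you genuinely diverge is the forward direction of (2): the paper works on the module side, noting that $d_i=0$ makes $(F(M),F(T_i))$ a $\tau$-rigid pair of $\Gamma$, completing it to a support $\tau$-tilting pair by Adachi--Iyama--Reiten, and then lifting back to a basic maximal rigid object containing $M\oplus\Sigma T_i$ via Theorem~\ref{t:LiuXie-ChangZhangZhu}; you instead use the $2$-Calabi--Yau property to see that $\Sigma T_i\oplus M$ is rigid and complete it to a basic maximal rigid object inside $\cc$ directly. Your route is valid and arguably more elementary, but be aware that the completion statement is not what Lemma~\ref{l:basic-property-cluster-tube} asserts (that lemma only describes maximal rigid objects); you should either justify it directly --- by Lemma~\ref{l:basic-property-cluster-tube}(1) there are only finitely many indecomposable rigid objects, so a rigid object maximal among those containing $\Sigma T_i\oplus M$ exists and is maximal rigid by the very definition --- or fall back on the paper's $\tau$-tilting completion, which is designed to supply exactly this step. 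The converse direction of (2) agrees with the paper.
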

\begin{proof}
Let $\cc$ be the cluster tube of rank $n+1$.
Without loss of generality, we may assume that $\mathcal{A}=\mathcal{A}_T$, where $T=\bigoplus\limits_{i=1}^nT_i$ is a basic maximal rigid object of $\cc$ with $l(T_1)=n$. Recall that there is bijection $\mathbb{X}_?$ between the cluster variables of $\mathcal{A}_T$ and the indecomposable rigid objects of $\cc$ by Theorem~\ref{t:BMV-main-theorem}. 
Moreover, the initial cluster variable $x_i$ corresponds to the indecomposable  rigid object $\Sigma T_i$ under the bijection. Denote by $\Gamma=\End_\cc(T)$ the endomorphism algebra of $T$. Let $M$ be an indecomposable rigid object of $\cc$ such that $\mathbb{X}_M=x$. Note that $x$ is non-initial, we have $M\not\in \add \Sigma T$. By Theorem~\ref{t:denominator-thm}, we have $\opname{den}(x)=\opname{\underline{rank}} F(M)$, which implies the statement $(1)$. Furthermore, \[d_i=\begin{cases}\frac{\dim_K\Hom_\cc(T_1, M)}{2}& i=1;\\  \dim_K\Hom_\cc(T_i, M)&\text{else.}\end{cases}\] and we conclude that $d_i$ depends only on $\Sigma T_i$ and $M$. Consequently, $d_i$ depends only on the cluster variables $x$ and $x_i$.

For the second statement, we first assume that there is a component $d_i=0$. Then $(F(M), F(T_i))$ is a $\tau$-rigid pair of the endomorphism algebra $\Gamma$. Each $\tau$-rigid pair can be completed to a support $\tau$-tilting pair~\cite{Adachi-Iyama-Reiten}. In other words, there exists a rigid object $N$ such that $N\oplus M\oplus \Sigma T_i$ is a basic maximal rigid object of $\cc$ by Theorem~\ref{t:LiuXie-ChangZhangZhu}. Now applying the bijection $\mathbb{X}_?$, we deduce that there is  a cluster containing $x$ and $x_i$. Conversely, assume that there is a cluster containing $x$ and $x_i$. Then there is a basic maximal rigid object $N$ such that $M$ and $\Sigma T_i$ belong to $\add N$. Consequently, the component 
\[d_i=\begin{cases}\frac{\dim_K\Hom_\Gamma(F(T_1), F(M))}{2}=\frac{\dim_K\Hom_\cc(T_1, M)}{2}=\frac{\dim_K\Ext^1_{\cc}(\Sigma T_1, M)}{2}=0&i=1;\\
\dim_K\Hom_\Gamma(F(T_i), F(M))=\dim_K\Hom_\cc(T_i, M)=\dim_K\Ext^1_{\cc}(\Sigma T_i, M)=0& \text{else.}
\end{cases}
\]
\end{proof}
\begin{remark}
We remark that the statement $(1)$ of Theorem~\ref{t:d-conjecture} has been known for any cluster algebras of finite type ({\it cf.}~\cite{CCS2, CV}) and has been recently verified for skew-symmetric type in \cite{CaoLi}. We also remark here that this conjecture has been proved for any skew-symmetrizable cluster algebra by using different methods in the arXiv paper \cite{CaoLi2} after the current paper was posted on the arXiv.
\end{remark}

The second application is to prove that the denominator vectors of cluster variables in a cluster form a basis of $\mathbb{Q}^n$.
\begin{theorem}~\label{t:linearly-independence}
Let $\mathcal{A}$ be a cluster algebra of type $\mathrm{C}_n$ with initial cluster $\mathrm{x}_0$.  For any cluster $\mathrm{y}=\{y_1,\cdots, y_n\}$ of $\mathcal{A}$, the denominator vectors $\opname{den}(y_1),\cdots, \opname{den}(y_n)$ are linearly independent over $\mathbb{Q}$.
\end{theorem}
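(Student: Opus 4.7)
The plan is to translate the cluster $\mathrm{y}$ into a basic maximal rigid object in the cluster tube $\cc$ via Theorem~\ref{t:BMV-main-theorem}, apply the denominator theorem (Theorem~\ref{t:denominator-thm}) together with the support $\tau$-tilting description of Theorem~\ref{t:LiuXie-ChangZhangZhu} to block-decompose the denominator matrix, and then use Lemma~\ref{l:g-d-duality} to reduce linear independence to the non-degeneracy of a Cartan matrix. The cases of types $\mathrm{A}_n$ and $\mathrm{B}_n$ follow from type $\mathrm{C}_n$ by the arguments recorded in Remarks~\ref{r:linearly-independence-A} and~\ref{r:linearly-independence-B}, so only type $\mathrm{C}_n$ will be treated.

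Without loss of generality we may assume $\mathcal{A}=\mathcal{A}_T$ for some basic maximal rigid object $T=\bigoplus T_i\in\cc$ with $l(T_1)=n$. Let $N=\bigoplus_{i=1}^n N_i$ be the basic maximal rigid object of $\cc$ corresponding to $\mathrm{y}$, and re-index so that $N_1,\ldots,N_r\notin\add\Sigma T$ while $N_{r+1},\ldots,N_n\in\add\Sigma T$, writing $N_{r+\ell}=\Sigma T_{j_\ell}$ and $J=\{j_1,\ldots,j_{n-r}\}$. By Theorem~\ref{t:denominator-thm}, $\opname{den}(y_i)=\rankv F(N_i)$ for $i\le r$, while $y_i=x_{j_{i-r}}$ is an initial variable for $i>r$, giving $\opname{den}(y_i)=-e_{j_{i-r}}$ directly from the definition. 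Theorem~\ref{t:LiuXie-ChangZhangZhu} says that $(F(N'),\bigoplus_{j\in J}P_j)$ with $N'=\bigoplus_{i=1}^rN_i$ is a support $\tau$-tilting pair over $\Gamma$, so $F(N_i)f_j=0$ for all $i\le r$ and $j\in J$; hence each vector $\rankv F(N_i)$ vanishes at every coordinate lying in $J$. After permuting rows so that $J$-indices appear last, the matrix $D$ whose columns are $\opname{den}(y_1),\ldots,\opname{den}(y_n)$ takes the block form
\[
D\ \sim\ \begin{pmatrix} D_1 & 0 \\ 0 & -I_{|J|} \end{pmatrix},
\]
with $D_1\in M_r(\Z)$ the matrix whose columns are the restrictions of $\rankv F(N_i)$ ($i\le r$) to the coordinates $J^c=\{1,\ldots,n\}\setminus J$. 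Thus linear independence of $\opname{den}(y_1),\ldots,\opname{den}(y_n)$ is equivalent to invertibility of $D_1$.

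Next, view $F(N')$ as a basic $\tau$-tilting module over the quotient $\Gamma':=\Gamma/\langle e_P\rangle$ with $e_P=\sum_{j\in J}f_j$. Rank and dimension vectors of locally free $\Gamma'$-modules differ only by an invertible diagonal scaling (by $1/2$ at the loop vertex, if it survives in $J^c$), so $D_1$ is invertible iff the dimension-vector matrix $D_{F(N')}(\Gamma')$ is invertible. Lemma~\ref{l:g-d-duality} now yields
\[
G_{F(N')}(\Gamma')^{\opname{tr}}\,D_{F(N')}(\Gamma')\ =\ C\bigl(\End_{\Gamma'}(F(N'))\bigr),
\]
and because $G_{F(N')}(\Gamma')\in\GL_r(\Z)$, everything comes down to proving that the Cartan matrix of $\End_{\Gamma'}(F(N'))$ is non-degenerate.

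The hard part is exactly this last step. The plan is to use the equivalence $F:\opname{pr}T/\add\Sigma T\xrightarrow{\sim}\mod\Gamma$ to identify $\End_{\Gamma'}(F(N'))$ with $\End_\cc(N')$ modulo the ideal of morphisms that factor through $\add\Sigma T$, and then to leverage the explicit quiver-with-relations description of Theorem~\ref{t:algbra-maximal-tube}: endomorphism rings of maximal rigid objects in $\cc$ are gentle algebras whose only oriented cycles carrying full relations are the unique loop (of length one) and the $3$-cycles forced by membership in the class $\mathcal{Q}_n$, all of odd length. One then checks that passing to the quotient only imposes additional ``path equals zero'' relations on existing paths of $Q_{N'}$ and cannot create new even-length oriented cycles of full relations, so by Lemma~\ref{l:Cartan-matrix-gentle} the Cartan matrix of $\End_{\Gamma'}(F(N'))$ is non-degenerate. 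This yields invertibility of $D_1$, hence the required linear independence.
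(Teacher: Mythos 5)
Up to the last step your route is the paper's route: reduce to $\mathcal{A}_T$, use Theorem~\ref{t:denominator-thm} and the vanishing of the $J$-coordinates (the paper gets this from Theorem~\ref{t:d-conjecture}(2), you get it from the support $\tau$-tilting pair of Theorem~\ref{t:LiuXie-ChangZhangZhu} -- equivalently from $\Hom_\cc(T_j,N_i)\cong\Ext^1_\cc(\Sigma T_j,N_i)=0$; this is the same in substance), split off the $-e_j$ block, pass to the $\tau$-tilting module $F(N')$ over $\Gamma'=\Gamma/\langle e_P\rangle$, and invoke Lemma~\ref{l:g-d-duality} plus invertibility of the $G$-matrix to reduce everything to non-degeneracy of $C\bigl(\End_{\Gamma'}(F(N'))\bigr)$. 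The diagonal rescaling between rank and dimension vectors is handled correctly.

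The genuine gap is exactly at the step you flag as ``the hard part.'' Lemma~\ref{l:Cartan-matrix-gentle} is a statement about \emph{gentle} algebras, and you never establish that $\End_{\Gamma'}(F(N'))$ is gentle; you only assert that the quotient of $\End_\cc(N)$ by morphisms factoring through $\add\Sigma T$ ``imposes additional path-equals-zero relations'' and so stays within the reach of Holm's criterion. That justification does not work as a general principle: the ideal of morphisms factoring through $\add\Sigma T$ is strictly larger than the idempotent ideal coming from the summands $\Sigma T_j$, $j\in J$ (morphisms between the $N_i$ may factor through $\Sigma T_k$ with $k\notin J$), the extra relations it creates need not be quadratic monomial relations, and adding zero relations to a gentle presentation can destroy gentleness (it can force two arrows $\gamma_1,\gamma_2$ with both $\gamma_1\beta,\gamma_2\beta$ in the ideal, violating the gentle condition). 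So as written you are applying Holm's lemma to an algebra whose gentleness is unproven, and the sketched argument cannot prove it. The paper closes precisely this hole with Corollary 1.2 of \cite{SchroerZimmermann}: since $F(N')$ is $\tau$-tilting over the gentle algebra $\Gamma'$, it has $\Ext^1_{\Gamma'}(F(N'),F(N'))=0$, hence $\End_{\Gamma'}(F(N'))$ is gentle; only then does it use the observation you make (the Gabriel quiver of this algebra is a quotient, hence subquiver, of the quiver of $\End_\cc(N)$ with $Q_N\in\mathcal{Q}_n$, so all its oriented cycles are the loop or $3$-cycles, all odd) together with Lemma~\ref{l:Cartan-matrix-gentle} to conclude non-degeneracy. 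Supplying the Schr\"{o}er--Zimmermann input (or an equivalent proof of gentleness) is what your argument is missing; with it, the rest of your proof goes through and agrees with the paper's.
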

\begin{proof}
We may assume that $\mathcal{A}=\mathcal{A}_T$ for some basic maximal rigid object $T\in \cc$ and the initial cluster $\mathrm{x}_0$ corresponds to the basic maximal rigid object $\Sigma T$ under the bijection $\mathbb{X}_?$. Denote by $\Gamma=\End_\cc(T)$ the endomorphism algebra of $T$.

Note that for an initial cluster variable $x_i$, we have $\opname{den}(x_i)=-e_i\in \mathbb{Z}^n$.
Without loss of generality, we may assume that $y_1,\cdots, y_r$ are precisely the non-initial cluster variables in the cluster $\mathrm{y}$. According to Theorem~\ref{t:d-conjecture} (2), for each $i=1,\cdots, r$, the last $n-r$ components of $\opname{den}(y_i)$ vanish.  Let
$\opname{den}_r(y_i)$ be the vector formed by the first $r$ entries of $\opname{den}(y_i)$. It suffices to show that $\opname{den}_r(y_1),\cdots, \opname{den}_r(y_r)$ are linearly independent over $\mathbb{Q}$. Set $M=M_1\oplus \cdots\oplus M_r$,  where $M_1,\cdots, M_r$ are indecomposable rigid objects in $\cc$ such that $\mathbb{X}_{M_i}=y_i$ for $i=1,\cdots, r$. Denote by $M\oplus \Sigma T_M$  the basic maximal rigid object corresponding to the cluster $\mathrm{y}$. Recall that we have an equivalence $F: \opname{pr} T/\add \Sigma T\to \mod \Gamma$. By Theorem~\ref{t:LiuXie-ChangZhangZhu}, we deduce that $F(M)$ is a $\tau$-tilting module over $A:=\Gamma/\langle e_M\rangle$, where $e_M$ is the idempotent of $\Gamma$ corresponding to the direct summand $T_M$. By Theorem~\ref{t:algbra-maximal-tube}, we know that $A$ is a gentle algebra. 

According to Theorem~\ref{t:denominator-thm}, we have $\opname{den}_r(y_i)=\opname{\underline{rank}} F(M_i)$ for $i=1,\cdots, r$, where $\opname{\underline{rank}} F(M_i)$ is the rank vector of $F(M_i)$ as an $A$-module. Let $G_{F(M)}(A)^{\opname{tr}}$ be the transpose of $\opname{G}$-matrix of $F(M)$ and $D_{F(M)}(A)$ the $\opname{D}$-matrix of $F(M)$. Applying Lemma~\ref{l:g-d-duality}, we obtain
\[G_{F(M)}(A)^{\opname{tr}}D_{F(M)}(A)=C(\End_A(F(M))).
\]
Note that $F(M)$ is a $\tau$-tilting $A$-module, which implies that $\Ext_A^1(F(M),F(M))=0$. Applying Corollary 1.2 of ~\cite{SchroerZimmermann}, we deduce that $\End_A(F(M))$ is also a gentle algebra.
On the other hand, it is not hard to see that \[\End_A(F(M))=\End_\Gamma(F(M))\cong \End_\cc(M)/(\Sigma T)\cong \End_\cc(M\oplus \Sigma T_M)/(\Sigma T),\] where $(\Sigma T)$ is the ideal generated by morphisms factoring through the objects in $\add\Sigma T$. Namely, $\End_A(F(M))$ is a quotient of the endomorphism algebra of the basic maximal rigid object $M\oplus \Sigma T_M$ of $\cc$.
According to Theorem~\ref{t:algbra-maximal-tube} and Remark~\ref{r:cluster-tilted-A}, we conclude that $\End_A(F(M))$ is a gentle algebra without oriented cycles of even length with full relations. Consequnetly, the Cartan matrix $C(\End_A(F(M)))$ of $\End_A(F(M))$ is non-degenerate by Lemma~\ref{l:Cartan-matrix-gentle}. Recall that the $\opname{G}$-matrix $G_{F(M)}(A)$ is invertible over $\mathbb{Z}$. Therefore, the $\opname{D}$-matrix $D_{F(M)}(A)$ is non-degenerate and we conclude that $\opname{\underline{rank}} F(M_1),\cdots, \opname{\underline{rank}} F(M_r)$ are linearly independent over $\mathbb{Q}$. This finishes the proof.
\end{proof}
\begin{remark}~\label{r:linearly-independence-A}
Note that the denominator theorem was also established for cluster algebras of type $\mathrm{A}$ in~\cite{CCS2, BMR}. On the other hand, every cluster-tilted algebra of type $\mathrm{A}$ is a gentle algebra whose quiver is described in Remark~\ref{r:cluster-tilted-A}.  It is clear that the above proof implies the corresponding  statement for cluster algebras of type $\mathrm{A}$.
\end{remark}
\begin{remark}~\label{r:linearly-independence-B}
In ~\cite{ReadingStella}, Reading and Stella established a $D$-matrix duality for denominator vectors of cluster algebras of finite type. In particular, for an arbitrary cluster $\mathrm{y}=\{y_1,\cdots, y_n\}$ of a cluster algebra of type $\mathrm{B}_n$, the set of denominator vectors $\{\opname{den}(y_1),\cdots, \opname{den}(y_n)\}$ coincides with the set of denominator vectors of cluster variables in a cluster for a cluster algebra of type $\mathrm{C}_n$. Therefore we also obtain the linear independence for the denominator vectors for cluster algebras of type $\mathrm{B}$.
\end{remark}

\section{Categorical interpretations for $g$- and $c$-vectors}~\label{s:c-vector}
Let $\cc$ be the cluster tube of rank $n+1$ and $T=\bigoplus\limits_{i=1}^nT_i$ a basic maximal rigid object of $\cc$ with $l(T_1)=n$. Let $B_T$ be the skew-symmetrizable matrix associated to $T$ and  $\ca_{T,pr}$ the cluster algebra with principal coefficients associated to the matrix $B_T$. By Theorem~\ref{t:BMV-main-theorem}, there is a bijection between indecomposable rigid objects of $\cc$ and the cluster variables of $\ca_{T,pr}$. The aim of this section is to give a categorical interpretation for $g$-vectors and $c$-vectors of $\ca_{T,pr}$.

\subsection{$g$-vector  as index}

Recall that $\opname{pr} T$ is the full subcategory of $\cc$ consisting of objects which are finitely presented by $T$. Moreover, each indecomposable rigid object belongs to $\opname{pr} T$.
Let $\go(\add T)$ be the split Grothendieck group of $\add T$.
For each $X\in \opname{pr}T$, we have a triangle
\[T_1^X\to T_0^X\to X\to \Sigma T_1^X
\]
with $T_1^X, T_0^X\in \add T$.
The {\it index} $\opname{ind}_T(X)$ of $X$ with respect to $T$ is defined to be
\[\opname{ind}_T(X):=[T_0^X]-[T_1^X]\in \go(\add T),
\]
where $[*]$ stands for the image of the object $*$ in the Grothendieck group $\go(\add T)$. We define the {\it $g$-vector} $g_T(X)=(g_1,\cdots, g_n)^{\opname{tr}}$ of $X$ with respect to $T$ as the coordinate vector of $\opname{ind}_T(X)$ with respect to the standard basis $[T_1], \cdots, [T_{n}]$ of $\go(\add T)$.

Let $\Gamma=\End_\cc(T)$ be the endomorphism algebra of $T$.
 Recall that we have an equivalence
\[F:\opname{pr} T/\add \Sigma T\to \mod \Gamma
\] 
and the functor $F$ induces a bijection between the basic maximal rigid objects of $\cc$ and the basic support $\tau$-tilting $\Gamma$-modules ({\it cf.}~Theorem~\ref{t:LiuXie-ChangZhangZhu}).
 The g-vector of a rigid object in $\cc$ has close relation to the g-vector of a $\tau$-rigid $\Gamma$-module.
 Indeed, for an indecomposable rigid object $X\not\in \add \Sigma T$, $F(X)$ is an indecomposable $\tau$-rigid $\Gamma$-module and 
 one can show that $g_T(X)$ coincides with the g-vector $g(F(X))$ of the $\tau$-rigid $\Gamma$-module $F(X)$.
 Following ~\cite{DehyKeller08}, one can show that different rigid objects have different indices and $g$-vectors. Moreover, if $T'$ is another basic maximal rigid object of $\cc$, then the indices of the direct summands of $T'$ with respect to $T$ form a basis of $\go(\add T)$. 

Now fix a vertex $t_0\in \mathbb{T}_n$. We associate to $t_0$ the $2n\times n$ matrix $\tilde{B}_T=\left(\begin{array}{c}B_T\\ E_n\end{array}\right)$ to get the cluster algebra with principal coefficents $\ca_{T,pr}:=\ca(\tilde{B}_T)$.  To each vertex $t\in \mathbb{T}_n$, we associate a basic maximal rigid object $T_t=\bigoplus\limits_{i=1}^nT_{t,i}$ such that $T_{t_0}=T$ and $T_{t'}=\mu_k(T_t)$ whenever $t\frac{~~k~~}{}t'$ is an edge of $\mathbb{T}_n$. Let
\[T_{t,k}^*\to U_{T_{t,k}, T_{t}\backslash T_{t,k}}\to T_{t,k}\to \Sigma T_{t,k}^*~ \text{and}~T_{t,k}\to U'_{T_{t,k}, T_{t}\backslash T_{t,k}}\to {T_{t,k}^*}\to \Sigma T_{t,k}
\]
be the exchange triangles associated to $T_{t,k}$ and $T_{t,k}^*$. Following ~\cite{DehyKeller08}, we define two linear maps  $\phi^{\pm}_{t,t'}:\go(\add T_t)\to \go(\add T_{t'})$ which both send each $[N]$ for an indecomposable object $N$ belonging to both $\add T_{t}$ and $\add T_{t'}$ to itself and such that
\[\phi^+_{t,t'}([T_{t,k}])=[ U_{T_{t,k}, T_{t}\backslash T_{t,k}}]-[T_{t,k}^*]~~\text{and}~~\phi^{-}_{t,t'}([T_{t,k}])=[U'_{T_{t,k}, T_{t}\backslash T_{t,k}}]-[T_{t,k}^*].
\]
The following result is due to Dehy and Keller~\cite{DehyKeller08}.
\begin{theorem}~\label{t:g-vector and index}
\begin{itemize}
\item[(1)]For any rigid object $X$ in $\cc$, we have
\[\ind_{T_{t'}}(X)=\begin{cases}\phi^+_{t,t'}(\ind_{T_t}(X))& ~\text{if}~ [\ind_{T_t}(X):T_{t,k}]\geq 0;\\ \phi^-_{t,t'}(\ind_{T_t}(X))& ~\text{if}~ [\ind_{T_t}(X):T_{t,k}]\leq 0,\end{cases}
\]
where $ [\ind_{T_t}(X):T_{t,k}]$ is the coefficient of $[T_{t,k}]$ in $\ind_{T_t}(X)$ with respect to the basis $[T_{t,1}], \cdots, [T_{t,n}]$.
\item[(2)] For each vertex $t\in \mathbb{T}_n$ and $1\leq l\leq n$, we have $g_{l,t}^{\tilde{B}_T, t_0}=g_T(T_{t,l})$.
\end{itemize}

\end{theorem}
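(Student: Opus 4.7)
The plan is to adapt the Dehy--Keller argument~\cite{DehyKeller08}, originally developed for cluster-tilting objects in $2$-Calabi--Yau categories, to the present setting of the maximal rigid object $T$ in the cluster tube $\cc$. The crucial observation is that every rigid object of $\cc$ lies in $\opname{pr} T_t$ for every basic maximal rigid object $T_t$, so the index is a well-defined element of $\go(\add T_t)$ and is additive on triangles whose three terms all lie in $\opname{pr} T_t$; these two properties follow, as in~\cite{DehyKeller08}, by comparing two presentations via the octahedral axiom and by a horseshoe-type construction producing a presentation of the middle term from presentations of the outer terms, respectively.

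For part (1), I would write $\ind_{T_t}(X)=\sum_i a_i[T_{t,i}]$ and choose a minimal presentation $T_1^X\to T_0^X\to X\to \Sigma T_1^X$ in which $T_{t,k}$ appears with multiplicity $[a_k]_+$ in $T_0^X$ and with multiplicity $[-a_k]_+$ in $T_1^X$, but not in both. Assuming $a_k\geq 0$, I would splice this presentation with $a_k$ copies of the exchange triangle
\[
T_{t,k}^{*}\to U_{T_{t,k},T_t\setminus T_{t,k}}\to T_{t,k}\to \Sigma T_{t,k}^{*}
\]
via the octahedral axiom to obtain a presentation $T_1'\to T_0'\to X\to \Sigma T_1'$ of $X$ with $T_0',T_1'\in \add T_{t'}$. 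Reading off the resulting index then gives
\[
\ind_{T_{t'}}(X) \;=\; \sum_{i\neq k}a_i[T_{t',i}] \,+\, a_k\bigl([U_{T_{t,k},T_t\setminus T_{t,k}}] - [T_{t',k}]\bigr) \;=\; \phi^+_{t,t'}(\ind_{T_t}(X)).
\]
The case $a_k\leq 0$ is symmetric, using the opposite exchange triangle and yielding $\phi^-_{t,t'}$.

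For part (2), I would proceed by induction on the distance from $t_0$ in $\mathbb{T}_n$. The base case $t=t_0$ is immediate: $g_{l,t_0}^{\tilde B_T,t_0}=e_l$ by the definition of the principal $g$-vector, while $g_T(T_l)=e_l$ can be read off the trivial presentation $0\to T_l\to T_l\to 0$. For the inductive step along an edge labeled $k$ from $t$ to $t'$, the summands $T_{t',l}=T_{t,l}$ for $l\neq k$ contribute nothing new, and the corresponding cluster variables $x_{l,t'}=x_{l,t}$ have unchanged $g$-vectors. For $l=k$, additivity of the index on the exchange triangle applied with reference object $T$ yields
\[
g_T(T_{t',k}) \;=\; g_T(U_{T_{t,k},T_t\setminus T_{t,k}}) \,-\, g_T(T_{t,k}) \;=\; \sum_{i\neq k} u_i\, g_T(T_{t,i}) \,-\, g_T(T_{t,k}),
\]
where $u_i$ denotes the multiplicity of $T_{t,i}$ in $U_{T_{t,k},T_t\setminus T_{t,k}}$. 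By Lemmas~\ref{l:exchange triangle in cluster tube} and~\ref{l:skew-symmetrizable-matrix-via-quiver}, these multiplicities coincide with $[b_{ik}^{t}]_+$ (with the factor of $2$ arising precisely when $T_{t,i}$ or $T_{t,k}$ is the length-$n$ summand, consistent with the skew-symmetrizer $D$). Invoking the induction hypothesis, the right-hand side becomes the standard $g$-vector mutation formula for principal-coefficient cluster algebras (derivable from Proposition~\ref{p:property-c-type}(3) and (4)), hence equals $g_{k,t'}^{\tilde B_T,t_0}$.

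The main obstacle will lie in this last matching step: one must treat separately the two cases of Lemma~\ref{l:exchange triangle in cluster tube} (length $n$ versus smaller length), align the sign-coherent choice between the exchange triangles involving $U$ and $U'$ with the corresponding sign choice in the $g$-vector mutation formula, and handle with particular care the entry indexed by the distinguished length-$n$ summand $T_1$ around which the factor $d_1=2$ in the symmetrizer intervenes.
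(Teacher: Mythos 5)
Your treatment of part (1) follows essentially the same route as the paper, which simply invokes Sections 3 and 4 of Dehy--Keller: splicing a minimal $\add T_t$-presentation with copies of the appropriate exchange triangle via the octahedral axiom is indeed how that argument runs, and there is no problem there.

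Part (2), however, contains a genuine gap. The index is not additive on triangles, and in particular not on the exchange triangle $T_{t,k}^*\to U_{T_{t,k},T_t\setminus T_{t,k}}\to T_{t,k}\to \Sigma T_{t,k}^*$. Already at $t=t_0$ the correct value is $\ind_T(T_k^*)=[U'_{T_k,T\setminus T_k}]-[T_k]=\sum_i[-b_{ik}]_+[T_i]-[T_k]$, read off from the \emph{second} exchange triangle $T_k\to U'_{T_k,T\setminus T_k}\to T_k^*\to\Sigma T_k$, which is an $\add T$-presentation of $T_k^*$; your formula $g_T(U_{T_{t,k},T_t\setminus T_{t,k}})-g_T(T_{t,k})$ instead yields $\sum_i[b_{ik}]_+[T_i]-[T_k]$, which differs by the $k$-th column of $B_T$. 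In general, which of the two exchange triangles computes $\ind_T(T_{t,k}^*)$ with respect to $T$ is governed by a sign condition --- precisely the dichotomy $[\ind_{T_t}(X):T_{t,k}]\geq 0$ versus $\leq 0$ built into part (1) --- and your induction on the distance from $t_0$ discards it. Even after repairing this, the cluster-algebra recursion you would need is the $g$-vector recursion under mutation of the \emph{current} seed with the initial seed fixed, which involves the tropical sign of the $c$-vector at $(t,k)$; this is not Proposition~\ref{p:property-c-type}(4), which is the recursion under change of the \emph{initial} vertex, and it does not follow from (3) and (4) without a sign-coherence argument that you have not supplied. The paper's proof of (2) sidesteps both issues: it fixes the rigid object $T_{t,l}$ and varies the reference maximal rigid object along an edge $t_1\frac{~k~}{}t_2$ of $\mathbb{T}_n$, applies part (1) (which carries the correct sign dichotomy) to show that the assignment $t_1\mapsto g_{T_{t_1}}(T_{t,l})$ satisfies exactly the initial-seed recursion of Proposition~\ref{p:property-c-type}(4), reads the multiplicities of $T_{t_1,j}$ in $U_{T_{t_1,k},T_{t_1}\setminus T_{t_1,k}}$ off the definition of $B_{T_{t_1}}$, and anchors the comparison at the vertex $t$ itself, where both sides equal $e_l$. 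If you want to keep your formulation, you must replace the claimed additivity by the case distinction of part (1) and prove the current-seed $g$-vector recursion with its sign, which is substantially more than the matching step you describe.
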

\begin{proof}
The same proof of Section $3$ and $4$ in ~\cite{DehyKeller08} applied. We sketch a proof for the second statement to emphasize why the indices categorify the $g$-vectors of cluster algebra of type $\mathrm{C}$ but not of type $\mathrm{B}$.   It suffices to prove that the $g$-vectors $g_T(T_{t,l})$ satisfy the same recurrence formula as in Proposition~\ref{p:property-c-type} $(4)$.  Let $t_1\frac{~~k~~}{}t_2$ be two vertices of $\mathbb{T}_n$ and \[T_{t_1,k}^*\to U_{T_{t_1,k}, T_{t_1}\backslash T_{t_1,k}}\to T_{t_1,k}\to \Sigma T_{t_1,k}^*~ \text{and}~T_{t_1,k}\to U'_{T_{t_1,k}, T_{t_1}\backslash T_{t_1,k}}\to {T_{t_1,k}^*}\to \Sigma T_{t_1,k}
\]
 the exchange triangles associated to $T_{t_1,k}$. Denote by $B_{T_{t_1}}=(b_{ij}^{t_1})$ the matrix associated to $T_{t_1}$. Let $g_{T_{t_1}}(T_{t,l})=(g_1,\cdots, g_n)$ and $ g_{T_{t_2}}(T_{t,l})=(g_1', \cdots, g_n')$ be the $g$-vectors of $T_{t,l}$ with respect to $T_{t_1}$ and $T_{t_2}$ respectively. We have to show that
 \[g_j'=\begin{cases}-g_k& ~\text{if}~ j=k;\\ g_j+[b_{jk}^{t_1}]_{+}g_k-b_{jk}^{t_1}\min(g_k,0)& \text{else}.\end{cases}
 \]
Recall that $g_i=[\ind_{T_{t_1}}(T_{t,l}):T_{t_1,k}]$. Assume that $g_k\geq 0$.  By the first statement, we have
\begin{eqnarray*}
\ind_{T_{t_2}}(T_{t,l})&=&\phi^+_{t_1,t_2}(g_1[T_{t_1,1}]+\cdots+g_n[T_{t_1,n}])\\
&=&\sum_{j\neq k}g_j[T_{t_1,j}]+g_k\phi^+_{t_1,t_2}(T_{t_1,k})\\
&=&\sum_{j\neq k}g_j[T_{t_1,j}]+g_k([U_{T_{t_1,k}, T_{t_1}\backslash T_{t_1,k}}]-[T_{t_1,k}^*]).
\end{eqnarray*}
Note that $T_{t_1,k}^*=T_{t_2,k}$ and $U_{T_{t_1,k}, T_{t_1}\backslash T_{t_1,k}}$ dose not admit $T_{t_1,k}^*$ as a direct summand, we deduce that $g_k'=-g_k$. Now for $j\neq k$, by the definition of the matrix $B_{T_{t_1}}$, we have
\[b^{t_1}_{jk}=\alpha_{U_{T_{t_1,k}, T_{t_1}\backslash T_{t_1,k}}}T_{t_1,j}-\alpha_{U'_{T_{t_1,k}, T_{t_1}\backslash T_{t_1,k}}}T_{t_1,j}.
\]
In particular, we have
\[[U_{T_{t_1,k}, T_{t_1}\backslash T_{t_1,k}}:T_{t_1,j}]=\begin{cases}b_{jk}^{t_1}~&~\text{if}~b_{jk}^{t_1}\geq 0;\\ 0&~\text{otherwise}.\end{cases}
\]
It follows that $g_j'=g_j+g_k[b_{jk}^{t_1}]_{+}$ for $j\neq k$. Similarly, if $g_k\leq 0$, one can show that
\[g_j'=\begin{cases}-g_k&~\text{if}~j=k;\\ g_j+g_k[-b_{jk}^{t_1}]_+&~\text{if}~ j\neq k.\end{cases}
\]
This completes the proof.

\end{proof}

\subsection{$c$-vector as rank vector}
 Let $\opname{cv}(\ca_{T,pr})$ be the set of $c$-vectors of $\ca_{T,pr}$ and $\opname{cv}^+(\ca_{T,pr})$ the set of positive $c$-vectors, then \[\opname{cv}(\ca_{T,pr})=\opname{cv}^+(\ca_{T,pr})\cup -\opname{cv}^+(\ca_{T,pr}).\] The following result  interprets positive $c$-vectors as rank vectors of indecomposable $\tau$-rigid modules.
\begin{theorem}~\label{t:positive-c-vectors}
Let $T=\bigoplus\limits_{i=1}^nT_i$ be a basic maximal rigid object in $\cc$ and $\Gamma$ the endomorphism algebra of $T$.  We have
\[\opname{cv}^+(\ca_{T,pr})=\{\rankv~M~|~\text{$M$ is an indecomposable $\tau$-rigid $\Gamma$-module}\}.
\]
\end{theorem}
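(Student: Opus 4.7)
My approach is to match the two sides by cardinality and then to establish the containment $\{\rankv N : N \text{ indec.\ }\tau\text{-rigid}\} \subseteq \opname{cv}^+(\mathcal{A}_{T,pr})$. Both sides have cardinality $n^2$: Proposition~\ref{p:property-c-type}(1),(2) together with the non-vanishing of any column of the invertible $C$-matrix yield $|\opname{cv}^+(\mathcal{A}_{T,pr})| = n^2$, while via the equivalence $F \colon \opname{pr} T/\add \Sigma T \xrightarrow{\sim} \mod \Gamma$ combined with Lemma~\ref{l:basic-property-cluster-tube}(1) there are exactly $n^2$ indecomposable $\tau$-rigid $\Gamma$-modules, with pairwise distinct rank vectors by Theorem~\ref{t:rank-vector-of-tau-rigid}. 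Hence it suffices to prove the stated containment.

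Given an indecomposable $\tau$-rigid $N = F(X)$ with $X \in \cc$ indecomposable rigid and $X \notin \add \Sigma T$, I would complete $X$ via Theorem~\ref{t:LiuXie-ChangZhangZhu} to a basic maximal rigid object $T_t = \bigoplus_{k=1}^n T_{t,k}$ of $\cc$. I adopt the labelling convention $l(T_{t,1}) = n$ (possible by Lemma~\ref{l:basic-property-cluster-tube}(3) and preserved under mutation by Lemma~\ref{l:exchange triangle in cluster tube}), which puts the unique length-$n$ summand in a position matching the factor $2$ of the skew-symmetrizer $D = \operatorname{diag}(2,1,\ldots,1)$ of $B_T$ (this factor itself reflecting the unique loop of $Q_T$ at vertex $1$). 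At a vertex $t \in \mathbb{T}_n$ representing $T_t$ with $X = T_{t,l}$, the duality $G_t^{\opname{tr}} D C_t = D$ from Proposition~\ref{p:property-c-type}(3), combined with the interpretation of columns of $G_t$ as indices (Theorem~\ref{t:g-vector and index}(2)), yields
\[
c_l \;=\; d_l\,D^{-1}v_l, \qquad v_l \;:=\; \text{$l$-th column of } (G_t^{\opname{tr}})^{-1}.
\]
Applying Proposition~\ref{p:criterion} to the quotient algebra $A = \Gamma/\langle e\rangle$ (with $e$ the idempotent for summands of $T_t$ in $\add \Sigma T$) and the $2$-term silting object corresponding via Adachi--Iyama--Reiten to $F(T_t)$ identifies $v_l = \dimv M_l$ for a suitable indecomposable $A$-module $M_l$. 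The locally free identity $\dimv = D\cdot \rankv$ (valid because $\dim_K f_1\Gamma f_1 = 2$ thanks to the unique loop) then rewrites the $c$-vector as $c_l = d_l\,\rankv M_l$; sign-coherence (Proposition~\ref{p:property-c-type}(1)) pins down $|c_l|$ as a positive $c$-vector of this form.

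The main obstacle is to show that every vector $d_l\,\rankv M_l$ arising this way (as $t$ and $l$ vary) is in fact the rank vector of an indecomposable $\tau$-rigid $\Gamma$-module. When $d_l = 1$ (position $l \neq 1$), this reduces to showing that $M_l$ is $\tau$-rigid over $A$ and hence over $\Gamma$, after which $c_l = \rankv M_l$ directly. When $d_l = 2$ (position $l = 1$ of the length-$n$ summand), the factor $2$ must be absorbed by the two-dimensional endomorphism ring of the length-$n$ indecomposable (Lemma~\ref{l:morphism of quasi length n}), so that $2\rankv M_1$ coincides with the rank vector of the $\tau$-rigid length-$n$ module $X$ itself. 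Once these compatibilities are established, combining with the cardinality match from the first paragraph and the distinctness of rank vectors (Theorem~\ref{t:rank-vector-of-tau-rigid}) forces the two sets to coincide.
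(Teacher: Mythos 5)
Your global frame does coincide with the paper's: count both sides ($|\opname{cv}^+(\ca_{T,pr})|=n^2$ by Proposition~\ref{p:property-c-type}(2) and sign-coherence, $n^2$ distinct rank vectors by Theorem~\ref{t:rank-vector-of-tau-rigid} and the equivalence $F$), identify the cluster-algebra $G$-matrix with the categorical one via Theorem~\ref{t:g-vector and index}, and use $G_t^{\opname{tr}}DC_t=D$ to get $C_t=D^{-1}C_t(\Gamma)D$ with $D=\operatorname{diag}(2,1,\dots,1)$ and the length-$n$ summand pinned at position $1$. But there is a genuine gap exactly where the theorem has content. Starting from $N=F(X)$ you complete $X$ to an \emph{arbitrary} maximal rigid $T_t$ and then invoke Proposition~\ref{p:criterion} to write the relevant column of $C_t(\Gamma)$ as $\dimv M_l$ for ``a suitable indecomposable module $M_l$''. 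Nothing in your argument relates $M_l$ to $N$; $M_l$ is not known to be locally free (so $\rankv M_l$ is not even defined), nor $\tau$-rigid, and your ``main obstacle'' paragraph only restates these missing claims rather than proving them -- note also that establishing them would amount to the reverse inclusion $\opname{cv}^+\subseteq\{\rankv\}$, not the inclusion you announced, although by the counting either one would do if actually proved.

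The paper closes precisely this gap by a careful choice of completion: it takes $T_{t'}=\tilde M\oplus\tilde N$ with $\Hom_\cc(\tilde N,\tilde M)=0$. With this choice one can compute the Hom-spaces $\Hom_{\der^b(\mod\Gamma)}(Q,\Sigma^iF(\tilde M))$ for the associated $2$-term silting object $Q$: if $l(\tilde M)<n$ they are $K$ for $i=0$ and $0$ otherwise, so Proposition~\ref{p:criterion} applies with the module $F(\tilde M)$ \emph{itself}, showing $\dimv F(\tilde M)$ is a column of $C_{t'}(\Gamma)$, and Lemma~\ref{l:basic-property-cluster-tube}(3) guarantees it is not the first column; if $l(\tilde M)=n$, Lemma~\ref{l:morphism of quasi length n} gives $K\oplus K$ in degree $0$, so the dual-basis description of $c$-vectors yields $\tfrac12\dimv F(\tilde M)$ as the \emph{first} column of $C_{t'}(\Gamma)$. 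Only then does conjugation by $D$ convert these columns into $\rankv F(\tilde M)$ as a column of $C_{t'}$, i.e.\ a positive $c$-vector. Without the condition $\Hom_\cc(\tilde N,\tilde M)=0$ these Hom computations fail for a general completion, and your chain $c_l=d_l\,\rankv M_l$ is unsupported; also, the criterion should be applied over $\Gamma$ (the silting object lives in $\per\Gamma$), not over the quotient $\Gamma/\langle e\rangle$, which plays no role in this proof.
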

\begin{proof}
As before, the cluster pattern for the cluster algebra $\ca_{T,pr}$ is given by assigning the initial exchange matrix $\tilde{B}_T$ to the root vertex $t_0$.  In particular, for each vertex $t\in \mathbb{T}_n$, we have the $G$-matrix $G_t$ and the $C$-matrix $C_t$ for the cluster algebra $\ca_{T,pr}$. To each vertex $t\in \mathbb{T}_n$, we also associate a basic maximal rigid object $T_t=\bigoplus\limits_{i=1}^nT_{t,i}$ such that $T_{t_0}=T$ and $T_{t'}=\mu_k(T_t)$ whenever $t\frac{~~k~~}{}t'$ is an edge of $\mathbb{T}_n$. Note that for each $t\in \mathbb{T}_n$, $\Hom_{\cc}(T, T_t)$ is a support $\tau$-tilting $\Gamma$-module. We denote by $G_t(\Gamma)$ and $C_t(\Gamma)$ the corresponding $G$-matrix and $C$-matrix  associated to the support $\tau$-tilting $\Gamma$-module $\Hom_{\cc}(T, T_t)$ with respect to the decomposition induced by $T_t$.  Now Theorem~\ref{t:g-vector and index} implies that for any $t\in \mathbb{T}_n$, we have $G_t=G_t(\Gamma)$. Hence by Proposition ~\ref{p:property-c-type} $(3)$, we have 
\[C_t=D^{-1}C_t(\Gamma)D,\]where $D\in M_n(\Z)$ is the skew-symmetrizer of $\tilde{B}_T$.

Now assume that $l(T_1)=n$. In other words, $D=\opname{diag}\{2, 1,\cdots, 1\}$.
Let $M$ be an indecomposable $\tau$-rigid $\Gamma$-module with 
\[\dimv M=(m_1,m_2,\cdots, m_n)^{\opname{tr}},
\]
we then have
\[\rankv M=(\frac{m_1}{2}, m_2,\cdots, m_n)^{\opname{tr}}.
\]
By Proposition~\ref{p:property-c-type} $(2)$ and Theorem~\ref{t:rank-vector-of-tau-rigid}, it suffices to  show that for each indecomposable $\tau$-rigid $\Gamma$-module $M$, the  rank vector $\rankv~M$ is a positive $c$-vector of the cluster algebra $\ca_{T,pr}$.

Let $\tilde{M}$ be the indecomposable preimage of $M$ in $\opname{pr} T$. We may choose a maximal rigid object in $\cc$, say $T_{t'}:=\tilde{M}\oplus \tilde{N}$, such that $\Hom_{\cc}(\tilde{N},\tilde{M})=0$. Note that $\Hom_{\cc}(T, \tilde{M}\oplus \tilde{N})$ is a support $\tau$-tilting $\Gamma$-module. Let $Q_{M}$ be the corresponding $2$-term silting object in $\per \Gamma$. If $l(\tilde{M})<n$, then $\dimv M$ is a $c$-vector of $\Gamma$ by Proposition~\ref{p:criterion}. Recall that by Lemma \ref{l:basic-property-cluster-tube} (3), in the following exchange triangle of a basic maximal rigid object $X=X_1\oplus\cdots\oplus X_n$
\[X_k\to Y\to X_k^*\to \Sigma X_k,
\]
  the indecomposable object $X_k$ has length $n$ if and only if $X_k^*$ has length $n$. Consequently, if $l(\tilde{M})<n$, then $\dimv M$ is not the first column of $C_{t'}(\Gamma)$.  By $C_{t'}=D^{-1}C_{t'}(\Gamma)D$, we deduce that $\rankv M$ is a column vector of $C_{t'}$ and hence a positive $c$-vector of $\ca_{T,pr}$.
 Now if $l(\tilde{M})=n$,  by Lemma~\ref{l:morphism of quasi length n}, we have 
 \[\Hom_{\der^b(\mod \Gamma)}(Q_M, \Sigma^iM)=\begin{cases}K\oplus K,& i=0;
 \\ 0,& \text{else.}\end{cases}\]
 In other words, $\frac{1}{2}\dimv M$ is a $c$-vector of $\Gamma$. Moreover, $\frac{1}{2}\dimv M$ is the first column vector of $C_{t'}(\Gamma)$. Again the equality $C_{t'}=D^{-1}C_{t'}(\Gamma)D$ implies that $\rankv M$ is the first column vector of $C_{t'}$ and hence a positive $c$-vector of $\ca_{T,pr}$. This finishes the proof.
\end{proof}

\subsection*{Acknowledgments}
We are very grateful to the anonymous referee for significant comments and
corrections they proposed.

\end{document}